\theoremstyle{plain}
\newtheorem{lemma}[subsection]{Lemma}
\newtheorem*{theorem*}{Theorem}
\newtheorem{proposition}[subsection]{Proposition}
\newtheorem{corollary}[subsection]{Corollary}
\newtheorem{mainTheorem}{Theorem}
\crefname{mainTheorem}{Theorem}{Theorems}
\crefname{equation}{Equation}{Equations}
\theoremstyle{definition}
\theoremstyle{remark}
\newtheorem{example}[subsection]{Example}
\newtheorem*{remark*}{Remark}
\newtheorem*{caution}{Caution}
\numberwithin{equation}{section} 
\numberwithin{figure}{section} 
\numberwithin{table}{section} 
\numberwithin{subsection}{section} 
\let\c@equation\c@subsection
\let\c@figure\c@subsection
\let\c@table\c@subsection
\def\mid{\,\middle\vert\,}
\newcommand\set[2]{\ensuremath{\left\{#1\mid#2\right\}}}
\newcommand\up[2]{\ensuremath{#1_{#2}}}
\def\u{\ensuremath{\mathfrak u} }
\def\lgl{\ensuremath{\mathfrak g} }
\def\con{\ensuremath{T_X^*X_w}}
\DeclareMathOperator{\im}{Im}
\def\F#1.{\ensuremath{F(#1)}}
\def\E#1.{\ensuremath{E(#1)}}
\newcommand{\Ni}[1][ ]{\ensuremath{{\mathcal N_{\boldsymbol{#1}}} } }
\def\W{\ensuremath{S_n}}
\def\mod{\ensuremath{\mathrm{mod}\,}}
\def\define{\ensuremath{\overset{\scriptscriptstyle\operatorname{def}}=}}
\def\roots#1.#2.{\ensuremath{\Delta^{#1}_{#2}}}
\def\k{\ensuremath{\mathsf k}}
\DeclareMathOperator{\rk}{rk}
\def\mw{\mathrm m_w}
\def\w{{\underline w}}
\def\reso{\ensuremath{\widetilde{Z_{\w}}} }
\def\resO{(\reso)^\circ}
\def\bsw{\ensuremath{\widetilde{X_{\w}}} }
\def\dynk{\ensuremath{\mathcal D}}
\def\ws{\ensuremath{\omega}}
\title{Conormal Varieties on the Cominuscule Grassmannian -- II}
\author{Rahul Singh} 
\begin{document}
\begin{abstract}
Let $X_w$ be a Schubert subvariety of a cominuscule Grassmannian $X$, and let $\mu:T^*X\rightarrow\mathcal N$ be the Springer map from the cotangent bundle of $X$ to the nilpotent cone $\mathcal N$. 
In this paper, we construct a resolution of singularities for the conormal variety $T^*_XX_w$ of $X_w$ in $X$.
Further, for $X$ the usual or symplectic Grassmannian, we compute a system of equations defining $T^*_XX_w$ as a subvariety of the cotangent bundle $T^*X$ set-theoretically.
This also yields a system of defining equations for the corresponding orbital varieties $\mu(T^*_XX_w)$.
Inspired by the system of defining equations, we conjecture a type-independent equality, namely $T^*_XX_w=\pi^{-1}(X_w)\cap\mu^{-1}(\mu(T^*_XX_w))$.
The set-theoretic version of this conjecture follows from this work and previous work for any cominuscule Grassmannian of type A, B, or C.
\end{abstract}

\maketitle 


We work over an \emph{algebraically closed field} \k\ of \emph{good characteristic} (for a definition, see \cite{MR794307}).
Let $G$ be a connected algebraic group whose Lie algebra \lgl is simple.

For $\mathcal P$ a conjugacy class of parabolic subgroups of $G$, we denote by $X^{\mathcal P}$ the variety of parabolic subgroups of $G$ whose conjugacy class is $\mathcal P$.
The cotangent bundle $T^*X^{\mathcal P}$ of $X^{\mathcal P}$ is given by\[
	T^*X^{\mathcal P}=\set{(P,x)\in X^{\mathcal P}\times\Ni}{x\in\u_P},
\]
where \Ni is the variety of nilpotent elements in \lgl.
The map $\mu:T^*X\rightarrow\Ni$, given by $\mu(P,x)=x$, is the celebrated \emph{Springer map}.

Let $\mathcal B$ be the conjugacy class of Borel subgroups of $G$.
The Steinberg variety, 
\[
	\mathcal Z^{\mathcal P}=\set{(B,P,x)\in X^{\mathcal B}\times X^{\mathcal P}\times\Ni}{x\in\u_B\cap\u_P},
\] 
is reducible.
Each irreducible component $\mathcal Z^{\mathcal P}_w$ of $\mathcal Z^{\mathcal P}$ is the conormal variety of a $G$-orbit closure (under the diagonal action) in $X^{\mathcal B}\times X^{\mathcal P}$.

In this paper, for certain choices of $\mathcal P$, namely the ones for which $X^{\mathcal P}$ is cominuscule (see \cref{sub:comin}), we construct a resolution of singularities for each irreducible component $\mathcal Z_w^{\mathcal P}\subset\mathcal Z^{\mathcal P}$.
In types A and C, we also provide a system of defining equations, for each component $\mathcal Z^{\mathcal P}_w$ as a subvariety in $X^{\mathcal B}\times X^{\mathcal P}\times\Ni$.
This also yields a system of defining equations for certain orbital varieties. 
We discuss this later in this section.

\subsection*{} \hspace{2pt} 
Before getting into the details, let us first present the irreducible components of $\mathcal Z^{\mathcal P}$ from an alternate point of view.
We fix a Borel subgroup $B$ in $G$, and a standard parabolic subgroup $P$ corresponding to omitting a cominuscule simple root $\gamma$, see \cref{sub:comin}.
Let $X$ be the variety of `parabolic subgroups conjugate to $P$'.  
We have an isomorphism $X\cong\nicefrac GP$, and further, a $G$-equivariant isomorphism,\[
	G\times^B X\xrightarrow\sim X^{\mathcal B}\times X,
\]
given by $(g,P)\mapsto (gBg^{-1},gPg^{-1})$.

A $B$-orbit $C_w\subset X$ is called a Schubert cell, and its closure $X_w$ is called a Schubert variety.
The conormal variety $T^*_XX_w$ of $X_w$ in $X$ is simply the closure of the conormal bundle of $C_w$ in $X$, see \cref{sub:con}.

Consider the map $\mathcal Z_w^{\mathcal P}\rightarrow X^{\mathcal B}$, given by $(B,P,x)\mapsto B$.
We identify $\mathcal Z_w^{\mathcal P}$ as a fibre bundle over $X^{\mathcal B}$ via this map, with the fibre over the point $B\in X^{\mathcal B}$ being precisely the conormal variety $T^*_XX_w$.
In particular, we have an isomorphism $$G\times^B T^*_XX_w\xrightarrow\sim\mathcal Z_w^{\mathcal P}.$$ 
From this viewpoint, it is clear that the geometry of $T^*_XX_w$ is closely related to the geometry of $\mathcal Z^{\mathcal P}_w$.

\begin{remark*}
A similar (and essentially equivalent) statement can be found in \cite[Proposition 3.3.4]{MR1433132}; the proof there is different, leveraging the fact that the Springer map $\mu:T^*X\rightarrow\Ni$ can be identified with the moment map arising from the $G$-symplectic structure on $T^*X$.
\end{remark*}

\subsection*{}\hspace{2pt}
We now present our main results.
Let $X_w$ be a Schubert subvariety of a cominuscule Grassmannian $X$. 
In \cref{section2}, we present a variety $\reso$, which is a vector bundle over a Bott-Samelson variety resolving $X_w$, along with a proper birational $B$-equivariant map, $\theta_\w:\reso\rightarrow\con$.

\begin{mainTheorem}
The map $\theta_\w:\reso\rightarrow\con $ is a $B$-equivariant resolution of singularities.
\end{mainTheorem}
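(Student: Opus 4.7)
The plan is to verify the three defining properties of a resolution of singularities for $\theta_\w$: smoothness of the source $\reso$, properness, and birationality; $B$-equivariance has been built into the construction in \cref{section2}. Since $\theta_\w$ is already asserted there to be proper, birational, and $B$-equivariant, the residual content of the theorem is essentially the smoothness of $\reso$ together with a careful verification that ``birational'' really means an isomorphism onto a dense open subset of $\con$.

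For smoothness of $\reso$, I would appeal to the standard fact that the Bott-Samelson variety $\bsw$ resolving $X_w$ is smooth, being obtained as an iterated $\mathbb{P}^1$-fibration (equivalently, as a successive quotient by minimal parabolics). Since $\reso$ is described as a vector bundle over $\bsw$, smoothness of $\reso$ is immediate from smoothness of the base together with the local triviality of a vector bundle.

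Properness of $\theta_\w$ should reduce to properness of the Bott-Samelson map $\bsw\rightarrow X_w$: concretely, $\reso$ sits inside the pullback bundle $T^*X\times_X\bsw$ as a closed subvariety, and $\theta_\w$ factors through the obvious map $T^*X\times_X\bsw\rightarrow T^*X$. Since the Bott-Samelson is complete over $\k$, its structure map to $X$ is proper, and properness descends along the linear fibres to the composite $\theta_\w$.

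The most delicate point, and the one I expect to be the main obstacle, is birationality onto $\con$. The natural strategy is to identify the distinguished open subset $\resO\subset\reso$ lying over the open Schubert cell $C_w\subset X_w$, and to show that $\theta_\w$ restricts to an isomorphism $\resO\xrightarrow{\sim}T^*_{C_w}X$ onto the conormal bundle of the open cell. Since $\con$ is by definition the closure of this conormal bundle, this open isomorphism combined with properness yields both surjectivity onto $\con$ and birationality. Verifying the open isomorphism amounts to comparing the fibre of $\reso$ over a representative point of $C_w$ with the conormal space $(T_{C_w}X)^\perp$ at that point; this is where the cominuscule hypothesis should enter crucially, since the cominuscule condition forces $\u_P$ to be abelian and makes the relevant root combinatorics align so that the two fibres have the same dimension and the map between them is linear and injective.
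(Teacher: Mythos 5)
Your proposal follows essentially the same route as the paper: smoothness of $\reso$ because it is a vector bundle over the smooth Bott--Samelson variety, properness by factoring $\theta_\w$ as a closed immersion into a bundle that is proper over $T^*X$, and birationality by checking that $\theta_\w$ restricts to an isomorphism from $\resO$ onto the conormal bundle $T^*_XC_w$ of the open cell, whose closure is $\con$, with properness then giving surjectivity. The one slight misattribution is the role of the cominuscule hypothesis: it is used (via \cref{defn:uw}) to show that $\u_w=\u\cap Ad(w^{-1})\u_B$ is $B$-stable -- which is what makes the bundle $\reso$ well defined and the whole construction $B$-equivariant -- rather than for a fibre-dimension comparison at a point of the open cell, where the identification of the conormal fibre with (a translate of) $\u_w$ holds for any parabolic.
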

Since the map $\theta_\w$ is $B$-equivariant, it also yields a resolution of singularities,\[
	G\times^B\theta_\w:G\times^B\reso\rightarrow\mathcal Z^{\mathcal P}_w.
\]
of the Steinberg component $\mathcal Z^{\mathcal P}_w$.

\subsection*{} \hspace{2pt} 
Next, we study the system of defining equations for the conormal variety $T^*_XX_w$ inside $T^*X$. 
For $i\geq 1$, let \E i. denote a vector space with basis $e_1,\cdots e_i$.
We fix a non-degenerate skew-symmetric bilinear form \ws\ on \E 2d.. 
Let $G$ either $SL(\E n.)$ or $Sp(\E{2d}.,\ws)$, and accordingly, let $X$ be either the \emph{usual Grassmannian},\[
	Gr(d,n)	=\left\{V\subset\E n.\mid\dim V=d\right\},
\]
or the \emph{symplectic Grassmannian},\[
	SGr(2d)	=\left\{V\subset\E 2d.\mid V=V^\perp\right\}.
\]
The cotangent bundle of $X$ is given by\[
	T^*X=\set{(V,x)\in X\times\Ni}{\im x\subset V\subset\ker x},
\]
where, recall that $\mathcal N$ denotes the corresponding nilpotent cone.
Let $B$ be the Borel subgroup which is the stabilizer of the flag $(\E i.)_i$ in $G$.
In \cref{finalEquations}, we provide a system of defining equations for $T^*_XX_w$ in $T^*X$.

\begin{mainTheorem}
Consider $(V,x)\in T^*X$.
Then $(V,x)\in T_X^*X_w$ if and only if $V\in X_w$, and further, for all $1\leq j<i\leq l+1$, we have
\begin{align*}
    \dim(\nicefrac{x\E\up ti.}{\E t_j.})\leq\begin{cases}\up r{i-1}-\up rj,\\ \up ci-\up c{j+1}.\end{cases}
\end{align*}
The numbers $r_i,c_i,t_i$ are defined in terms of $w$, see \cref{rici}.
\end{mainTheorem}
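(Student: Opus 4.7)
The plan is to combine the resolution $\theta_\w$ from Theorem~A with a dimension-counting argument to identify $\con$ with the locus cut out by the inequalities. Let $Y_w \subset T^*X$ denote the set of pairs $(V,x)$ such that $V \in X_w$ and the displayed rank inequalities hold for every $1 \le j < i \le l+1$. The goal is $Y_w = \con$.

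For the inclusion $\con \subset Y_w$, I would note that $Y_w$ is closed (the inequalities are determinantal and $X_w$ is closed) and that $\con$ is the closure of the conormal bundle over the open cell $C_w$, so it suffices to verify the inequalities at a generic point of $\con$. For this I would pull back through the resolution $\theta_\w : \reso \to \con$. Since $\reso$ is a vector bundle over a Bott--Samelson resolution of $X_w$, a generic point admits explicit coordinates both for the flag realizing $V \in C_w$ and for the nilpotent endomorphism $x$ living in the fiber. Computing $x\E t_i.$ and reducing modulo $\E t_j.$ using these coordinates should yield the asserted bounds as a direct combinatorial consequence of the definitions of $r_i, c_i, t_i$ in \cref{rici}.

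The reverse inclusion $Y_w \subset \con$ is the substantive part. Since $\con$ is an irreducible Lagrangian subvariety of dimension $\dim X$, and it already sits inside $Y_w$, it suffices to show that every irreducible component of $Y_w$ has dimension at most $\dim X$. I would proceed by stratifying $Y_w$ via the projection $\pi : T^*X \to X$: the set $Y_w$ is manifestly $B$-stable, so it decomposes, over Schubert cells $C_{w'} \subset X_w$, into locally closed pieces $Y_w \cap \pi^{-1}(C_{w'})$. For each $w' \leq w$ I would pick a convenient representative $V_{w'} \in C_{w'}$, write the condition ``$(V_{w'}, x) \in Y_w$'' as a system of linear constraints on $x$ (noting that $\im x \subset V_{w'} \subset \ker x$ is already linear), and estimate the dimension of the resulting vector subspace. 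The key combinatorial claim is then
\[
\dim C_{w'} + \dim\bigl(Y_w \cap \pi^{-1}(V_{w'})\bigr) \leq \dim X,
\]
with equality precisely when $w' = w$. This reduces everything to a statement about the numerics $(r_i, c_i, t_i)$ of $w$ versus the inversions of $w'$, to be proved separately in types A and C.

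The main obstacle will be the fiber-dimension estimate for $w' < w$. The inequalities couple all pairs $(i, j)$ and impose the pointwise minimum of two distinct bounds, so the cut-out locus is not an obvious product; moreover, the numerics $r_i, c_i, t_i$ depend on $w$ rather than on $w'$, so the linear algebra does not decouple along the stratum. I expect to handle this by placing $V_{w'}$ in a block form adapted to the jump sequence $(t_i)$, possibly combined with an induction on the length of $w$ using the Bott--Samelson description of $\reso$. In the symplectic case, the self-adjointness forced by $\ws$ tightens the inequalities and requires a parallel but distinct fiber computation. Once this dimension bound is established, the forward inclusion together with the irreducibility of $\con$ forces equality and completes the proof.
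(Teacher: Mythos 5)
Your forward inclusion is essentially sound and close in spirit to the paper's: the locus $Y_w$ you define is closed (Schubert conditions plus semicontinuity of rank), and the paper verifies the inequalities not just at a generic point but at every point of the variety $Z^Q_w$ that dominates $\con$ (\cref{eqrc}), using only the incidences $\E t_i.\subset\F q_{i+l}.$ and $x\F q_{i+l}.\subset\F q_{i-1}.$ rather than explicit Bott--Samelson coordinates. That part of your plan would go through.

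The reverse inclusion, however, has a genuine logical gap. Knowing that $\con\subset Y_w$, that $\con$ is irreducible of dimension $\dim X$, and that every irreducible component of $Y_w$ has dimension at most $\dim X$ only shows that $\con$ is \emph{an} irreducible component of $Y_w$; it does not exclude additional components of $Y_w$ of dimension $\leq\dim X$ lying over the boundary $\partial X_w$ and not contained in $\con$. Even your sharper stratified claim, that $\dim C_{w'}+\dim(Y_w\cap\pi^{-1}(V_{w'}))<\dim X$ for $w'<w$, only bounds the dimension of the strata over smaller cells; the closure of such a stratum could still be a separate component of $Y_w$. A dimension count cannot certify that an individual point $(V,x)\in Y_w$ with $V\in\partial X_w$ is a limit of points of the conormal bundle $T^*_XC_w$, and that is exactly what the equality $Y_w=\con$ requires. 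To close the argument along your lines you would need irreducibility (or at least pure-dimensionality together with a classification of possible components) of $Y_w$, none of which is established or obvious. Separately, the ``key combinatorial claim'' is itself a substantial unproved assertion, and for $w'=w$ you would additionally have to identify the fiber of $Y_w$ over a point of $C_w$ with $\u_w$ exactly.

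The paper avoids this entirely by arguing pointwise: given $(V,x)$ satisfying $V\in X_w$ and \cref{bigshow}, \cref{rows,cols} inductively construct a partial (resp.\ isotropic) flag $\F\underline q.$ with $\F d.=V$ and $(\F\underline q.,x)\in Z^Q_w$, the essential tool being the interpolation statement \cref{strong}; surjectivity of the proper birational map $\tau_w:Z^Q_w\rightarrow\con$ from \cref{tau,bir0} then puts $(V,x)$ in $\con$. If you want to keep a dimension-theoretic flavor, you would still need some version of this explicit lifting (or an irreducibility statement for $Y_w$) to rule out extraneous components.
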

A system of defining equations for $\mathcal Z_w^{\mathcal P}$ in $X^{\mathcal B}\times X\times\Ni$ follows as a consequence.
We simply replace the subspaces $\E t_i.$ with the subspace $E'({t_i})$, 
where $(E'(i))_i$ 
is the flag fixed by $B'$, the Borel subgroup at the first coordinate in $X^{\mathcal B}\times X\times\Ni$.

\Cref{finalEquations} does not hold for the orthogonal Grassmannian.
The key difference between $\mathsf C_n$ and $\mathsf D_n$ is the following:
Consider the embedding of the Weyl group $W$ into $S_{2n}$.
Then, for $\mathsf C_n$, the Bruhat order on $W$ is identical to the order induced by restricting the (type A) Bruhat order on $S_{2n}$. 
This is not true for $\mathsf D_n$.

In \cref{genEqn}, we interpret \cref{finalEquations} in a type-independent manner, 
\begin{align*}
	\con =\mu^{-1}(\mu(\con ))\cap\pi^{-1}(X_w).
\end{align*}
Here $\mu:T^*X\rightarrow\Ni$ is the Springer map, and $\pi:T^*X\rightarrow X$ is the structure map defining the cotangent bundle. 

We conjecture that \Cref{genEqn} holds for any Schubert variety $X_w$ in any cominuscule Grassmannian $X$.
The containment $\subset$ holds trivially.
Besides \cref{finalEquations}, further evidence in support of this conjecture is provided by \cref{whenSmooth}, which states that \cref{genEqn} holds set-theoretically if $X_w$ is smooth, and by \cref{oppSmooth}, which states that \cref{genEqn} holds scheme-theoretically if the opposite Schubert variety $X_{w_0w}$ is smooth.
Combining these results, we see that \cref{genEqn} holds set-theoretically for any cominuscule Grassmannian in types A, B, and C.

\subsection*{} \hspace{2pt} 
Finally, let us discuss orbital varieties, and their relationship with the conormal varieties of Schubert varieties.
Consider a $G$-orbit $\mathcal N_{\boldsymbol\lambda}^\circ\subset\Ni$.
The irreducible components of the closure $\overline{\Ni[\lambda]\cap\u_B}$ are called orbital varieties.
The reader might consult \cite{MR2510045} for a general survey.

\begin{caution}
Some authors define an orbital variety to be an irreducible component of $\mathcal N_{\boldsymbol\lambda}^\circ\cap\u_B$, where $\mathcal N_{\boldsymbol\lambda}^\circ$ is $G$-orbit in $\Ni$.
What we call an orbital variety here is an orbital variety closure in their language.
\end{caution}

The key fact relating orbital varieties with conormal varieties is the following:
Given a conjugacy class $\mathcal P$, and a Schubert variety $X_w^{\mathcal P}$, the image of the conormal variety $T^*_{X^{\mathcal P}}X^{\mathcal P}_w$ under the Springer map $\mu$ is an orbital variety.
Conversely, every orbital variety is of the form $\mu(T^*_{X^{\mathcal B}}X^{\mathcal B}_w)$ for some Schubert variety $X^{\mathcal B}_w\subset X^{\mathcal B}$.

\Cref{finalEquations} yields equations for the corresponding orbital varieties.

\begin{mainTheorem}
Let $G$, $B$, $P$, $X$, $w$, and $\mu$ be as in \cref{finalEquations}.
Then\[
	\mu(T^*_XX_w)=\set{x\in\u_B}{x^2=0,\,\dim(\nicefrac{x\E t_i.}{\E t_j.})\leq\begin{cases}\up r{i-1}-\up rj,\\\up ci-\up c{j+1},\end{cases}\forall\,1\leq i<j\leq l}.
\]
\end{mainTheorem}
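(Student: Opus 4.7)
The plan is to derive this description directly from \cref{finalEquations}, which identifies $T_X^*X_w$ with the locus in $T^*X$ cut out by the condition $V\in X_w$ together with the stated rank inequalities on $x$.  Applying the projection $\mu:(V,x)\mapsto x$ yields
\[
\mu(T^*_XX_w)=\set{x\in\Ni}{\text{the rank inequalities hold, and }\exists V\in X_w\text{ with }\im x\subset V\subset\ker x},
\]
and I would prove the two containments with the right-hand side separately.

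For $\subset$, the rank inequalities transfer from \cref{finalEquations} since they involve $x$ alone, and $x^2=0$ follows from the defining relation $\im x\subset V\subset\ker x$ of points of $T^*X$.  To see that $x\in\u_B$, I would argue that the conormal fibre of $T^*_XX_w$ over a generic $V\in C_w$ already lies in $\u_B$:  identifying $T^*_VX$ with $\u_{P_V}$ via the Killing form, the conormal to the $B$-orbit $C_w$ at $V$ is the annihilator of the tangent space $T_VC_w=\mathfrak b\cdot V$, i.e.\ $\u_{P_V}\cap\mathfrak b^\perp=\u_{P_V}\cap\u_B$.  Taking closures, the full image $\mu(T^*_XX_w)$ lies in $\u_B$.

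For $\supset$, I am given $x\in\u_B$ with $x^2=0$ satisfying the rank inequalities, and I must produce $V\in X_w$ with $\im x\subset V\subset\ker x$.  The candidate subspaces form a Grassmannian $\mathrm{Gr}(d-\rk x,\ker x/\im x)$, and the requirement $V\in X_w$ translates, via the partial flag induced on $\ker x/\im x$, into a Schubert-type constraint in this subGrassmannian.  My plan is to construct $V$ explicitly as $\im x+W$, where $W\subset\ker x$ is built inductively over the flag indices $t_i$ by adjoining, at each step, a subspace of $\ker x\cap E(t_i)$ of maximal dimension compatible with earlier choices.  The rank inequalities translate, via identities such as $\dim(\ker x\cap E(t_i))=t_i-\dim xE(t_i)$ and dual relations controlling $\dim(\im x\cap E(t_j))$, into exactly the lower bounds needed to realise the Schubert data $(r_i)$ defining $X_w$.

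The main obstacle is the compatibility of these inductive choices:  the finitely many constraints $\dim(V\cap E(t_i))\geq r_i$ must be satisfied simultaneously by a single $V$, not merely one index at a time.  This reduces to a combinatorial check that the hypothesised rank inequalities are sufficient (not only necessary) for the Schubert data to be realised, and I expect this to require type-dependent bookkeeping via the partition (type A) or link-pattern (type C) attached to $x$; consistent with the paper's remarks, an analogous argument is not expected to succeed in type D.
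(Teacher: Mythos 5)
Your reduction is the right one, and the $\subset$ direction is essentially complete (the inequalities come from \cref{finalEquations}, $x^2=0$ from $\im x\subset V\subset\ker x$, and $\mu(T^*_XX_w)\subset\u_B$ from the description $T^*_XC_w=BwB\times^B\u_w$ with $Ad(w)\u_w\subset\u_B$ and $\u_B$ being $B$-stable). The derived dimension identities for the $\supset$ direction also match the paper's: from the inequalities with $j=0$ one gets $\dim(\ker x\cap\E t_i.)\geq r_i$, and from the inequalities with $i$ maximal one gets $\dim(\im x\cap\E t_j.)\geq r_j-k$ where $k=d-\rk x$.

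However, the step you flag as "the main obstacle" is a genuine gap, and it is precisely where the mathematical content of the $\supset$ direction lives. You must produce a \emph{single} $V$ with $\im x\subset V\subset\ker x$ and $\dim(V\cap\E t_i.)\geq r_i$ for \emph{all} $i$ simultaneously; your proposed greedy/inductive construction of $W\subset\ker x$ is not carried out, and "a combinatorial check that the rank inequalities are sufficient" is an assertion, not an argument. The paper closes this gap with \cref{strong}: given $U\subset V'$ with $\dim(U\cap\E i.)\geq\mw(i,d)-(d-k)$ and $\dim(V'\cap\E i.)\geq\mw(i,m)$ for all $i$, there is $U'\in X^d_w$ with $U\subset U'\subset V'$. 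That interpolation lemma is itself nontrivial; it is proved by choosing a representative $w$ that is decreasing on each of the relevant blocks and invoking the surjectivity of the projection $X^Q_w\rightarrow X^{Q'}_w$ between Schubert varieties of partial flag varieties (\cref{proMap}, via \cite{MR2388163}), applied here with $U=\im x$ and $V'=\ker x$. Your instinct that the compatibility check would require type-dependent bookkeeping via partitions or link patterns is also off target: the paper's argument is uniform in types A and C, because type C Schubert conditions are obtained by intersecting type A ones with the isotropic locus (\cref{defCD:Xw}), so the same type A interpolation statement does the work. Without \cref{strong} or an equivalent substitute, your proof of $\supset$ does not go through.
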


It is in general an open problem to give a combinatorial description of the inclusion order on orbital varieties.
For varieties of matrices $x$ satisfying $x^2=0$, this problem was solved by Melnikov \cite{MR2134184} in type A, and in types B and C by Melnikov and Barnea \cite{MR3717217}.
%
In \cref{recover}, we show how their results can be recovered as a simple consequence of \cref{mainOrbital}.

\subsection*{Acknowledgements} 
We thank Anna Melnikov for pointing out a serious error in an earlier version of this article. 
We thank Manoj Kummini, V. Lakshmibai, Anna Melnikov, and Dinakar Muthiah for some very illuminating conversations.

\setcounter{mainTheorem}{0}
\section{The Conormal Variety of a Schubert Variety}\label{sec:basics}
In this section, we recall some standard results about Schubert varieties, their conormal varieties, and cominuscule Grassmannians.

Let \k\ be an \emph{algebraically closed field} of \emph{good characteristic}, \lgl a simple Lie algebra over \k, and $G$ a connected algebraic group for which $\lgl=\mathrm{Lie}(G)$.
We fix a maximal torus $T$ in $G$, and a Borel subgroup $B$ containing $T$.

Let \roots.. be the root system of \lgl with respect to $\mathfrak t=\mathrm{Lie}(T)$, and let $S$ and \roots+.. be the set of simple roots and positive roots respectively, corresponding to the choice of Borel subalgebra $\mathfrak b=\mathrm{Lie}(B)$.
For $\alpha\in\roots..$, we will write $\lgl_\alpha$ for the corresponding root space.

\subsection{Standard Parabolic Subgroups}
A subgroup $Q\subset G$ is called parabolic if the quotient $X^Q\define\nicefrac GQ$ is proper.
We will say that $Q$ is a \emph{standard} parabolic subgroup if $B\subset Q$.

Let $\set{s_\alpha}{\alpha\in S}$ be the set of simple reflections in the \emph{Weyl group} $W=\nicefrac{N_G(T)}T$; here $N_G(T)$ is the normalizer of $T$ in $G$. 
For any subset $R\subset S$, we have a subgroup $W_R\subset W$, given by $W_R=\left\langle s_\alpha\mid\alpha\in R\right\rangle$.
The subgroup $BW_RB\subset G$, given by,\[
	BW_RB=\set{b_1wb_2}{b_1,b_2\in B,\,w\in W_R},
\]
is a standard parabolic subgroup; further, the map $R\leftrightarrow BW_RB$ is a bijective correspondence from subsets of $S$ to the standard parabolic subgroups of $G$.

\subsection{Schubert Varieties}
Let $Q$ be a standard parabolic subgroup of $G$, corresponding to some subset $S_Q\subset S$.
A $B$-orbit $C^Q_w\subset X^Q$ is called a \emph{Schubert cell}.
The pull-back of $C^Q_w$ along the quotient map $G\rightarrow\nicefrac GQ=X^Q$ is\[
	BwQ=\set{bwq}{b\in B,\,q\in Q}.
\]

The closure $X^Q_w$ of the Schubert cell $C^Q_w$ is called a \emph{Schubert variety}. 
The Schubert varieties in $X_w^Q\subset X^Q$ are indexed by $w\in W^Q$, where,\begin{align}\label{minW}
    W^Q\define\left\{w\in W\mid w(\alpha)>0,\forall\alpha\in S_Q\right\}.
\end{align}
The set $W^Q$ is called the set of minimal representatives of $W$ with respect to $Q$.

\subsection{Bott-Samelson Varieties}
\label{bsw}
Let $\underline w=(s_1,\cdots,s_r)$ be a minimal word for $w$, i.e., the $s_i$ are simple reflections such that $w=s_1\cdots s_r$, and further, there is no sub-sequence of $\w$ whose product is $w$.

Let $P_i$ be the standard parabolic subgroup $\overline{Bs_iB}$. 
The Bott-Samelson variety,\[
    \bsw\define\nicefrac{P_1\times^B\cdots\times^B P_r}B,
\]
provides a resolution of singularities of $X_w^Q$ via the map $\rho^Q_\w:\bsw\rightarrow X^Q_w$, given by,\[
    (p_1,\cdots,p_r)\mapsto p_1\cdots p_r(\mod Q).
\]
Let $P_i^\circ$ denote the open set $Bs_iB\subset P_i$. 
The map $\rho^Q_\w$ induces an isomorphism,\[
	(\bsw)^\circ=\nicefrac{P_1^\circ\times^B\cdots\times^BP_r^\circ}B\xrightarrow\sim C_w^Q.
\]

\subsection{The Cotangent Bundle $T^*X^Q$}
The cotangent bundle $\pi:T^*X^Q\rightarrow X^Q$ is the vector bundle whose fibre $T^*_pX^Q$ at any point $p\in X^Q$ is precisely the cotangent space of $X^Q$ at $p$. 
We call $\pi$ the \emph{structure map} defining the cotangent bundle.

Recall that the characteristic of \k\ is a good prime.
We have (cf. \cite[Ch. 5]{MR2107324}),
\begin{align}\label{defn:cotan}
	T^*X^Q=G\times^Q\u_Q&=\nicefrac{(G\times\u_Q)}Q,
\end{align}
where the quotient is with respect to the action $q\boldsymbol\cdot(g,x)=(gq,Ad(q^{-1}x))$. 

\subsection{The Springer Map}\label{sub:springer}
Let \Ni be the nilpotent cone of $\mathfrak g$, i.e.,\[
    \Ni=\left\{x\in\lgl\mid Ad(x)\text{ is nilpotent}\right\}.
\]
The Springer map $\mu^Q:T^*X^Q\rightarrow\Ni$, given by, \[
	\mu^Q(g,x)=Ad(g)x,
\]
is a proper map. 
The product map, \begin{align*}
	(\pi,\mu^Q):T^*X^Q\rightarrow X^Q\times\Ni,	&&(g,x)\mapsto (g,\mu^Q(x)), 
\end{align*}
is a closed immersion, see \cite{MR1433132} for details.

\subsection{The Conormal Variety of a Schubert Variety}\label{sub:con}
Let $C^Q_w$ (resp. $X^Q_w$) be a Schubert cell (resp. Schubert variety) in $X^Q$, corresponding to some $w\in W^Q$.
The conormal bundle of $C^Q_w$ in $X^Q$ is the vector bundle,\[
	\pi^\circ_w:T^*_{X^Q}C^Q_w\rightarrow C^Q_w,
\]
whose fibre at a point $p\in C^Q_w$ is precisely the annihilator of the tangent subspace $T_pC^Q_w$ in $T^*_pX^Q $, i.e.,\[
	(T^*_{X^Q}C_w^Q)_p=\set{x\in T^*_pX^Q}{x(v)=0,\forall v\in T_pC^Q_w}.
\]

The \emph{conormal variety} $T^*_{X^Q}X^Q_w$ of $X^Q_w$ in $X^Q$ is the closure (in $T^*X^Q$) of the conormal bundle $T^*_{X^Q}C_w^Q$. 
The restriction of the structure map $\pi:T^*X^Q\rightarrow X^Q$ to the conormal variety induces a \emph{structure map}, $\pi_w:T^*_{X^Q}X_w^Q\rightarrow X^Q_w$.

\subsection{Cominuscule Grassmannians}\label{sub:comin}
A simple root $\gamma\in S$ is called \emph{cominuscule} if the coefficient of $\gamma$ in any positive root is either $0$ or $1$, i.e.,\[
	\alpha\in\roots+..\implies 2\gamma\not\leq\alpha. 
\]
The cominuscule roots for various Dynkin diagrams are labelled in \cref{TBL:comin}. 

\begin{table}

  \begin{tikzpicture}[scale=.4]
    \draw (-3.5,0) node[anchor=east]  {$\mathsf A_{n}$};
    \foreach \x in {-2,0,2,4,6}
    \draw[thick,fill=black!70] (\x cm,0) circle (.3cm);
    \draw[dotted, thick] (-1.7 cm,0) -- +(1.4 cm,0);
    \foreach \y in {0.3,2.3}
    \draw[thick] (\y cm,0) -- +(1.4 cm,0);
    \draw[dotted, thick] (4.3 cm,0) -- +(1.4 cm,0);
    \draw (-2,.8) node {$\scriptscriptstyle{1}$};
    \draw (0,.8) node {$\scriptscriptstyle{d-1}$};
    \draw (2,.8) node {$\scriptscriptstyle{d}$};
    \draw (4,.8) node {$\scriptscriptstyle{d+1}$};
    \draw (6,.8) node {$\scriptscriptstyle{n}$};
    
    \draw (11,0) node[anchor=east]  {$\mathsf B_{n}$};
    \foreach \x in {14.5,16.5,18.5,20.5}
    \draw[thick,fill=white!70] (\x cm,0) circle (.3cm);
    \draw[thick,fill=black!70] (12.5 cm,0) circle (.3cm);
    \draw[thick] (12.8 cm,0) -- +(1.4 cm,0);
    \draw[dotted,thick] (14.8 cm,0) -- +(1.4 cm,0);
    \draw[thick] (16.8 cm,0) -- +(1.4 cm,0);
    \draw[thick] (18.8 cm, .1 cm) -- +(1.4 cm,0);
    \draw[thick] (18.8 cm, -.1 cm) -- +(1.4 cm,0);
    \draw[thick] (19.4 cm, .3 cm) -- +(.3 cm, -.3 cm);
    \draw[thick] (19.4 cm, -.3 cm) -- +(.3 cm, .3 cm);
    \draw (12.5,.8) node {$\scriptscriptstyle{1}$};
    \draw (14.5,.8) node {$\scriptscriptstyle{2}$};
    \draw (16.5,.8) node {$\scriptscriptstyle{n-2}$};
    \draw (18.5,.8) node {$\scriptscriptstyle{n-1}$};
    \draw (20.5,.8) node {$\scriptscriptstyle{n}$};
    
    \draw (22,0) node[anchor=east]{};
    
  \end{tikzpicture}
  
    \vspace{3mm}

  \begin{tikzpicture}[scale=.4]
    \draw (-0.8,0) node[anchor=east]  {$\mathsf C_{n}$};
    \foreach \x in {0.5,2.5,4.5,6.5}
    \draw[thick,fill=white!70] (\x cm,0) circle (.3cm);
    \draw[thick,fill=black!70] (8.5 cm,0) circle (.3cm);
    \draw[thick] (0.8 cm,0) -- +(1.4 cm,0);
    \draw[dotted,thick] (2.8 cm,0) -- +(1.4 cm,0);
    \draw[thick] (4.8 cm,0) -- +(1.4 cm,0);
    \draw[thick] (6.8 cm, .1 cm) -- +(1.4 cm,0);
    \draw[thick] (6.8 cm, -.1 cm) -- +(1.4 cm,0);
    \draw[thick] (7.4 cm, 0 cm) -- +(.3 cm, .3 cm);
    \draw[thick] (7.4 cm, 0 cm) -- +(.3 cm, -.3 cm);
    \draw (0.5,.8) node {$\scriptscriptstyle{1}$};
    \draw (2.5,.8) node {$\scriptscriptstyle{2}$};
    \draw (4.5,.8) node {$\scriptscriptstyle{n-2}$};
    \draw (6.5,.8) node {$\scriptscriptstyle{n-1}$};
    \draw (8.5,.8) node {$\scriptscriptstyle{n}$};
    
    \draw (14,0) node[anchor=east]  {$\mathsf D_{n}$};
    \foreach \x in {17,19,21,23}
    \draw[thick,fill=white!70] (\x cm,0) circle (.3cm);
    \draw[thick,fill=black!70] (15 cm,0) circle (.3cm);
    \draw[xshift=23 cm,thick,fill=black!70] (30: 17 mm) circle (.3cm);
    \draw[xshift=23 cm,thick,fill=black!70] (-30: 17 mm) circle (.3cm);
    \draw[dotted,thick] (19.3 cm,0) -- +(1.4 cm,0);
    \foreach \y in {15.3, 17.3,21.3}
    \draw[thick] (\y cm,0) -- +(1.4 cm,0);
    \draw[xshift=23 cm,thick] (30: 3 mm) -- (30: 14 mm);
    \draw[xshift=23 cm,thick] (-30: 3 mm) -- (-30: 14 mm);
    \draw (15,.8) node {$\scriptscriptstyle{1}$};
    \draw (17,.8) node {$\scriptscriptstyle{2}$};
    \draw (19,.8) node {$\scriptscriptstyle{3}$};
    \draw (21,.8) node {$\scriptscriptstyle{n-3}$};
    \draw (23,.8) node {$\scriptscriptstyle{n-2}$};
    \draw (24.45,1.6) node {$\scriptscriptstyle{n-1}$};
    \draw (24.45,-.2) node {$\scriptscriptstyle{n}$};
  \end{tikzpicture}

    \vspace{1mm}

  \begin{tikzpicture}[scale=.4]
    \draw (-2.4,0) node[anchor=east]  {$\mathsf E_{6}$};
    \foreach \x in {1,3,5}
    \draw[thick,fill=white!70] (\x cm,0) circle (.3cm);
    \draw[thick,fill=white!70] (3 cm, 2 cm) circle (.3cm);
    \foreach \x in {-1,7}
    \draw[thick,fill=black!70] (\x cm,0) circle (.3cm);
    \foreach \y in {-0.7, 1.3, 3.3, 5.3}
    \draw[thick] (\y cm,0) -- +(1.4 cm,0);
    \draw[thick] (3 cm,.3 cm) -- +(0,1.4 cm);
    \draw (-1,.8) node {$\scriptscriptstyle{1}$};
    \draw (1,.8) node {$\scriptscriptstyle{3}$};
    \draw (3.5,.8) node {$\scriptscriptstyle{4}$};
    \draw (5,.8) node {$\scriptscriptstyle{5}$};
    \draw (7,.8) node {$\scriptscriptstyle{6}$};
    \draw (3.5,2.8) node {$\scriptscriptstyle{2}$};

    \draw (12,0) node[anchor=east]  {$\mathsf E_{7}$};
    \foreach \x in {13,15,17,19,21}
    \draw[thick,fill=white!70] (\x cm,0) circle (.3cm);
    \draw[thick,fill=white!70] (19 cm, 2 cm) circle (.3cm);
    \draw[thick,fill=black!70] (23 cm,0) circle (.3cm);
    \foreach \y in {13.3, 15.3, 17.3, 19.3, 21.3}
    \draw[thick] (\y cm,0) -- +(1.4 cm,0);
    \draw[thick] (19 cm,.3 cm) -- +(0,1.4 cm);
    \draw (13,.8) node {$\scriptscriptstyle{1}$};
    \draw (15,.8) node {$\scriptscriptstyle{3}$};
    \draw (17,.8) node {$\scriptscriptstyle{4}$};
    \draw (19.5,.8) node {$\scriptscriptstyle{5}$};
    \draw (21,.8) node {$\scriptscriptstyle{6}$};
    \draw (23,.8) node {$\scriptscriptstyle{7}$};
    \draw (19.5,2.8) node {$\scriptscriptstyle{2}$};
  \end{tikzpicture}
  
 \vspace{3mm} 
    \caption{Dynkin diagrams with cominuscule simple roots marked in black.}
    \label{TBL:comin}
\end{table}

\begin{example}\label{uGrass}
Let \E n. be an $n$--dimensional vector space.
The variety of $d$--dimensional subspaces of \E n. is called the usual Grassmannian variety,\[
    Gr(d,n)\define\left\{V\subset\E n.\mid\dim V=d\right\}.
\] 
It is a cominuscule Grassmannian corresponding to the group $G=SL_n$, and the cominuscule root $\alpha_d\in\mathsf A_{n-1}$, see \cref{TBL:comin}.
\end{example}

\begin{example}\label{sGrass}
Let \E 2d. be a $2d$--dimensional vector space, and $\omega$ a symplectic form on $\E 2d.$.
The variety of Lagrangian subspaces in \E 2d.,\[
    SGr(2d)\define\left\{V\subset\E 2d.\mid V=V^\perp\right\},
\] 
is called the \emph{symplectic Grassmannian}.
It is a cominuscule Grassmannian corresponding to the group $G=Sp_{2d}$, and the cominuscule root $\alpha_d\in\mathsf C_d$, see \cref{TBL:comin}.
\end{example}

\section{A Resolution of Singularities of the Conormal Variety}
\label{section2}
Let $G$, $B$, $T$, $\roots..$, $\roots+..$, and $S$ be as in the previous section.
We fix a cominuscule root $\gamma\in S$.
Let $P$ be the standard parabolic subgroup corresponding to $S\backslash\{\gamma\}$, and let \u be the Lie algebra of the unipotent radical of $P$.
We will denote the variety $X^P=\nicefrac GP$ as simply $X$, and the Schubert varieties $X_w^P$ as simply $X_w$.

In this section, we study the conormal variety $T_X^*X_w$ of a Schubert variety $X_w$ in $X$.
In particular, we describe the structure of the \emph{conormal bundle} $T^*_XC_w$ in \cref{lem:zow}, and construct a resolution of singularities of $T_X^*X_w$ in \cref{bir0}.

\begin{lemma}\label{defn:uw}
For any $w\in W^P$, the subspace $\u_w\define\u\cap Ad(w^{-1})\u_B$  is $B$-stable.
\end{lemma}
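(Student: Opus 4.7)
My plan is to verify $B$-stability of $\u_w$ by decomposing into root spaces and reducing to a combinatorial closure condition. Since $\u_w=\u\cap Ad(w^{-1})\u_B$ is an intersection of $T$-stable subspaces, it is itself $T$-stable and hence a direct sum of root spaces; and since $B$ is connected, $B$-stability is equivalent to $\mathfrak b$-stability under the adjoint bracket, i.e.\ to closure of the indexing set of roots under adding any positive root (whenever the sum is still a root).

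The first step is to identify which roots appear. Cominusculeness of $\gamma$ gives $\u=\bigoplus_{\alpha\in\Delta(\u)}\lgl_\alpha$, where $\Delta(\u)$ is the set of positive roots whose coefficient of $\gamma$ equals $1$, while $Ad(w^{-1})\u_B=\bigoplus_{w(\alpha)>0}\lgl_\alpha$. Hence
\[
    \u_w=\bigoplus_{\alpha\in\Delta(\u),\,w(\alpha)>0}\lgl_\alpha.
\]
Closure of this index set under adding a positive root $\beta$ must be checked in two parts. Cominusculeness immediately gives the first: if $\beta$ had coefficient $\geq 1$ of $\gamma$, then $\alpha+\beta$ would have coefficient $\geq 2$, contradicting $2\gamma\not\leq\alpha+\beta$. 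So $\beta$ lies in the positive root system of the Levi of $P$, and $\alpha+\beta$ still has coefficient $1$ of $\gamma$, i.e.\ $\alpha+\beta\in\Delta(\u)$.

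The main obstacle, and essentially the only non-routine step, is to show $w(\alpha+\beta)>0$. For this I would invoke the standard fact that $w\in W^P$ forces $w$ to send every positive root of the Levi into $\Delta^+$: expanding $\beta$ as a non-negative integer combination of simple roots in $S\setminus\{\gamma\}$, each of which $w$ sends to a positive root by the defining property \eqref{minW} of $W^P$, shows that $w\beta$ is a non-negative integer combination of positive roots, hence a positive root itself. Combined with the hypothesis $w\alpha>0$, this yields $w(\alpha+\beta)=w\alpha+w\beta>0$, completing the argument.
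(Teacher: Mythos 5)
Your proof is correct and takes essentially the same route as the paper: both reduce to showing that the set of roots indexing $\u_w$, namely $\left\{\alpha\in\Delta\mid\gamma\leq\alpha,\ w(\alpha)>0\right\}$, is closed under adding positive roots, using cominusculeness to force any such summand to lie in the Levi of $P$ and the defining property of $W^P$ to preserve $w$-positivity. The only difference is cosmetic: the paper observes that $B$ is generated by $T$ and the simple root subgroups, so it only checks closure under adding a simple $\beta$, whereas you check closure under adding an arbitrary positive $\beta$ of the Levi by expanding it into simple roots; both amount to the same calculation.
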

\begin{proof}
The subspaces $\u$ and $Ad(w^{-1})\u_B$ are $T$-stable, and so their intersection $\u_w$ is also $T$-stable.
Further, since $Ad(w^{-1})\lgl_\alpha=\lgl_{w^{-1}(\alpha)}$, we have,\[
	\u_w=\bigoplus\limits_{\alpha\geq\gamma}\lgl_\alpha\bigcap\bigoplus\limits_{\alpha\in\roots+..}\lgl_{w^{-1}(\alpha)}=\bigoplus\limits_{\alpha\in R}\lgl_\alpha,
\]
where $R=\left\{\alpha\in\roots..\mid\alpha\geq\gamma,\,w(\alpha)>0\right\}$.
Since $B$ is generated by the torus $T$ and the root subgroups $U_\alpha$, $\alpha\in S$,
it suffices to show that $\u_w$ is $U_\alpha$-stable for all $\alpha\in S$.
This follows from the claim,\[
	\alpha\in R,\beta\in S, \alpha+\beta\in\roots..\implies\alpha+\beta\in R,
\]
which we now prove.
We first consider the case $\beta=\gamma$.
For any $\alpha\in R$, we have $\gamma\leq\alpha$, hence $2\gamma\leq\alpha+\beta$.
Now, since $\gamma$ is cominuscule, we have $\alpha+\beta\not\in\roots..$.

Next, we consider $\beta\in S\backslash\{\gamma\}$.
In this case, since $w\in W^P$, it follows from \cref{minW} that $w(\beta)>0$. 
Now, for any $\alpha\in R$, we have $w(\alpha)>0$, hence\[
    w(\alpha+\beta)=w(\alpha)+w(\beta)>0.
\]
It follows from the definition of $R$ that if $\alpha+\beta\in\roots..$, then $\alpha+\beta\in R$.
\end{proof}

\begin{lemma}\label{lem:zow}
The conormal bundle $T_X^*C_w\rightarrow C_w$ is isomorphic to the vector bundle $BwB\times^B\u_w\rightarrow C_w$, given by $(bw,x)\mapsto bw(\mod P)$. 
\end{lemma}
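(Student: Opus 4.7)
The plan is to construct an explicit map $\phi:BwB\times^B\u_w\to T^*X$ realizing the given formula, show that its image is contained in the conormal bundle $T^*_XC_w$, and then use a dimension count to conclude that $\phi$ is a vector-bundle isomorphism onto $T^*_XC_w$.

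First I would define $\phi$ on $BwB\times\u_w$ by sending $(g,x)$ to $[g,x]\in G\times^P\u=T^*X$. This is well-defined because $\u_w\subset\u$, and by \cref{defn:uw} together with $B\subset P$ it descends to a morphism $\phi:BwB\times^B\u_w\to T^*X$. A preliminary observation is that the natural map $BwB/B\to BwP/P=C_w$, $gB\mapsto gP$, is an isomorphism, as a consequence of $w\in W^P$: the only way a positive root $\alpha$ could have $w^{-1}\alpha$ lie in the negative roots of the Levi $L\subset P$ is if $-w^{-1}\alpha\in\roots+..$ is a root of $L$, which forces $\alpha<0$, a contradiction. Thus $B\cap wPw^{-1}=B\cap wBw^{-1}$, making $\phi$ a morphism of vector bundles over $C_w$.

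The key step is to verify that $\phi$ lands in $T^*_XC_w$. Since $\phi$ is $B$-equivariant and $C_w$ is a single $B$-orbit, it suffices to check at $p_0=wP$ that every $x\in\u_w$ annihilates $T_{wP}C_w\subset T_{wP}X$. Identifying $T_{wP}X\cong\lgl/Ad(w)\mathfrak p$ and $T^*_{wP}X\cong Ad(w)\u\subset\lgl$ via the Killing form, one expands $T_{wP}C_w$ as the image of $\mathfrak b$ in the quotient. A root-space analysis, using the cominuscule condition to isolate the Levi contribution, shows that $T_{wP}C_w$ has a basis of root vectors $e_\alpha$ with $\alpha\in\roots+..$, $w^{-1}\alpha<0$, and $-w^{-1}\alpha\geq\gamma$. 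On the dual side, a cotangent vector $Ad(w)e_\beta$ with $e_\beta\in\u_w$ satisfies $\beta\geq\gamma$ and $w(\beta)>0$. Its Killing pairing with $e_\alpha$ is nonzero only when $\alpha=-w(\beta)$; but then $\alpha>0$ would force $w(\beta)<0$, contradicting $e_\beta\in\u_w$. Hence the pairing vanishes identically.

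To finish, $\phi$ is injective on fibres (different $x\in\u_w$ give different classes $[g,x]$ in $G\times^P\u$, since the right $P$-stabilizer of any $g\in G$ is trivial). The same root count as above yields $\dim\u_w=\mathrm{codim}_XC_w$, which matches the rank of the conormal bundle. Combined with the previous step, this forces $\phi$ to be a vector-bundle isomorphism from $BwB\times^B\u_w$ onto $T^*_XC_w$. I expect the main technical hurdle to be the root-space bookkeeping at $wP$, which simultaneously uses the cominuscule condition on $\gamma$ (to cleanly characterize the Levi roots of $\mathfrak p$) and the minimality of $w\in W^P$ (to identify the base of the source bundle with $C_w$).
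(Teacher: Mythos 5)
Your proof is correct, and it rests on the same underlying fact as the paper's: that for $(bw,x)\in T^*X$ one has $(bw,x)\in T^*_XC_w$ precisely when $x\in\u_w$. The difference is that the paper outsources both ingredients to references -- the isomorphism $C^B_w\xrightarrow{\sim}C_w$ for $w\in W^P$, and the pointwise characterization of the conormal fibre over $BwB$ -- whereas you supply direct arguments. Your route also deviates in one small structural respect: rather than proving the biconditional, you prove only the containment of the image of $\phi$ in $T^*_XC_w$ (via $B$-equivariance and the Killing-form pairing at the $T$-fixed point $wP$, where the pairing $\kappa(Ad(w)e_\beta,e_\alpha)$ vanishes unless $\alpha=-w(\beta)$, which is impossible for $\alpha>0$ and $w(\beta)>0$), and you recover equality from the rank count $\dim\u_w=\#\set{\beta\geq\gamma}{w(\beta)>0}=\operatorname{codim}_XC_w$, which matches the rank of the conormal bundle of the cell. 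Both root-space computations (identifying $\mathrm{Lie}(B\cap wPw^{-1})$ with $\mathrm{Lie}(B\cap wBw^{-1})$ using $w\in W^P$, and the annihilation condition at $wP$) are sound, as is the fibrewise injectivity of $\phi$. What your version buys is self-containedness; what the paper's buys is brevity.
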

\begin{proof}
Let $\operatorname{pr}:X^B_w\rightarrow X_w$ be restriction of the quotient map $\nicefrac GB\rightarrow\nicefrac GP$ to $X^B_w$.
Since $w\in W^P$, the map $\operatorname{pr}$ restricts to an isomorphism of Schubert cells $C^B_w\xrightarrow\sim C_w$, see \cite{MR1923198}.
The claim now follows from the observation (cf. \cite[\S4.3]{conormal2017}) that for any $(bw,x)\in T^*X$, we have $(bw,x)\in T^*_XC_w$ if and only if $x\in\u_w$.
\end{proof}

\subsection{The Subgroup $Q$}
\label{defn:zwq}
As a consequence of the previous lemma, we see that $Stab_G(\u_w)$ is a standard parabolic subgroup.
Let $Q$ be any standard parabolic subgroup contained in $P$ that stabilizes $\u_w$, i.e.,\begin{align}\label{defn:Q}
    Q\subset Stab_G(\u_w)\cap P.
\end{align}
We define a vector bundle $\pi^Q_w:Z_w^Q\rightarrow X_w^Q$, where,\begin{align*}
    Z_w^Q=BwQ\times^Q\u_w,	&&\pi^Q_w(g,x)=g(\mod Q).
\end{align*}

Let $\underline w=(s_1,\cdots,s_r)$ be a minimal word for $w$. 
Recall the Bott-Samelson variety \bsw from \cref{bsw}. 
We lift $Z_w^Q$ to a vector bundle $\pi_{\underline w}:\reso\rightarrow\bsw$ given by,
\begin{align*} 
    \reso=P_1\times^B\cdots\times^BP_r\times^B\u_w, 
\end{align*}
and $\pi_{\underline w}(\underline p,x)=\underline p(\mod B)$, for $\underline p\in P_1\times^B\cdots\times^BP_r$.

\begin{proposition}\label{tau}
Let $\tau$ be the quotient map $G\times^Q\u\rightarrow G\times^P\u$. 
Viewing $Z^Q_w$ as a subvariety of $G\times^Q\u$, we have $\tau(Z_w^Q)\subset T_X^*X_w$. 
Let $\tau_w:Z_w^Q\rightarrow T_X^*X_w$ denote the induced map.
We have a commutative diagram,\[
\begin{tikzcd}
    \reso\arrow[d,"\pi_\w"]\arrow[r,"\theta^Q_\w"]  &Z_w^Q\arrow[r,"\tau_w"]\arrow[d,"\pi^Q_w"] &T_X^*X_w\arrow[d,"\pi_w"]\\
    \bsw\arrow[r,"\rho^Q_\w"]                       &X_w^Q\arrow[r,"\operatorname{pr}"]         &X_w
\end{tikzcd}
\]
Here $\operatorname{pr}:X^Q_w\rightarrow X_w$ is the restriction of the quotient map $\nicefrac GQ\rightarrow\nicefrac GP$ to $X^Q_w$, and $\theta^Q_\w:\reso\rightarrow Z_w^Q$ is the map given by $\theta^Q_\w(p_1,\cdots,p_r,x)=(p_1\cdots p_r,x)$.
\end{proposition}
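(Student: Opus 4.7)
The plan is to prove the two claims of the proposition in sequence: first the containment $\tau(Z^Q_w)\subset T^*_XX_w$, then the commutativity of each square in the diagram. Both reduce to elementary bookkeeping across the chain of subgroups $B\subset Q\subset P$, using the $Q$-stability of $\u_w$ built into the definition of $Q$ in \cref{defn:zwq} and the explicit description of $T^*_XC_w$ from \cref{lem:zow}.

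For the containment, the map $\tau:G\times^Q\u\to G\times^P\u=T^*X$ is well-defined because $Q\subset P$. To show $\tau(Z^Q_w)\subset T^*_XX_w$, I would work on the open dense subset of $Z^Q_w$ lying over the cell $C^Q_w$, namely the classes $[g,x]_Q$ with $g=bwq\in BwQ$ for $b\in B$, $q\in Q$, and $x\in\u_w$. Since $Q$ stabilizes $\u_w$, we have $[bwq,x]_Q=[bw,Ad(q)x]_Q$ with $Ad(q)x\in\u_w$. Thus $\tau$ sends this class to $[bw,Ad(q)x]_P\in G\times^P\u$, and by \cref{lem:zow} this is precisely a point of the conormal bundle $T^*_XC_w$. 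Taking closures, $\tau(Z^Q_w)\subset\overline{T^*_XC_w}=T^*_XX_w$.

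Commutativity of each square is then a direct diagram chase. Starting from $[p_1,\dots,p_r,x]_B\in\reso$, both paths in the left square yield $p_1\cdots p_r\pmod Q\in X^Q_w$: via $\theta^Q_\w$ then $\pi^Q_w$ we compute $\pi^Q_w[p_1\cdots p_r,x]_Q=p_1\cdots p_r\pmod Q$; via $\pi_\w$ then $\rho^Q_\w$ we get the same directly from the definition of $\rho^Q_\w$. Starting from $[g,x]_Q\in Z^Q_w$, both paths in the right square yield $g\pmod P\in X_w$: via $\tau_w$ then $\pi_w$ we compute $\pi_w[g,x]_P=g\pmod P$; via $\pi^Q_w$ then $\operatorname{pr}$ we get the same. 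Well-definedness of $\theta^Q_\w$ as a map into $Z^Q_w\subset G\times^Q\u$ uses that $B\subset Q$, so that the $B$-equivalences on $\reso$ factor through $Q$-equivalences.

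The main subtlety I anticipate is notational. The formula $Z^Q_w=BwQ\times^Q\u_w$, read literally, only gives a bundle over the Schubert cell $C^Q_w$, whereas the diagram and the context dictate that $Z^Q_w$ should be the closure in $G\times^Q\u$, fibered over the entire Schubert variety $X^Q_w$. Accordingly, the explicit representation $[bw,x]_Q$ from the second paragraph is only available on an open dense subset; since $\tau$ is continuous and $T^*_XX_w$ is closed in $T^*X$, the containment extends from this dense subset to all of $Z^Q_w$.
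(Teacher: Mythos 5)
Your proof is correct and takes essentially the same route as the paper: both establish the containment by working over the open cell, using $Q$-stability of $\u_w$ together with the characterization of $T^*_XC_w$ from Lemma~\ref{lem:zow}, and then pass to closures, with commutativity being a direct check. Your observation about the notational looseness in the definition of $Z^Q_w$ (that $BwQ$ should be read as its closure $\overline{BwQ}$ so that the bundle sits over $X^Q_w$ rather than just $C^Q_w$) is accurate and matches how the paper uses the object in its own proof.
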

\begin{proof}
Let $(Z^Q_w)^\circ$ be the restriction of of the vector bundle $\pi^Q_w:Z^Q_w\rightarrow X^Q_w$ to the Schubert cell $C_w^Q$.
The quotient map $\nicefrac GB\rightarrow\nicefrac GQ$ induces an isomorphism $C_w^B\xrightarrow\sim C_w^Q$ of Schubert cells.
Consequently, the quotient map,\begin{align}\label{work50}
	Z^\circ_w=BwB\times^B\u_w\longrightarrow BwQ\times^Q\u_w,	&&(bw,x)\mapsto (bw,x),
\end{align}
is an isomorphism. 
Observe that this map is the inverse of $\tau|(Z^Q_w)^\circ$, and so, $$\tau((Z^Q_w)^\circ)=T^*_{X}C_w\subset T_X^*X_w.$$
Now, since $T_X^*X_w$ is a closed subvariety, it follows that $\tau(Z^Q_w)\subset T_X^*X_w$. 

Finally, the commutativity of the diagram is a simple verification based on the formulae defining the various maps.
\end{proof}

Before we prove \cref{bir0}, let us recall some standard results about proper maps, which the reader can find, for example, in \cite[Ch.2]{MR0463157}.
\begin{proposition}\label{proper0}
The following properties are true:\begin{enumerate}
    \item Closed immersions are separated and proper.
    \item The composition of proper maps is proper.
    \item If $g:X\rightarrow Y$ is a proper map, then $g\times\mathrm{id}_Z:X\times Z\rightarrow Y\times Z$ is proper.
    \item Let $f:Y\hookrightarrow Z$ be a closed immersion.
    A map $g:X\rightarrow Y$ is proper if and only if $f\circ g$ is proper.
\end{enumerate}
\end{proposition}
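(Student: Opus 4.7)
The plan is to verify each of the four standard properties by unpacking the definition of properness as the conjunction of \emph{separated}, \emph{finite type}, and \emph{universally closed}, and tracking how each of these three conditions behaves under composition and base change, essentially as in Hartshorne II.4. None of the four items require a genuinely new idea; the work is in keeping the bookkeeping straight.

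For (1), I would note that a closed immersion $i:Y\hookrightarrow X$ is a monomorphism, so the diagonal $Y\to Y\times_X Y$ is an isomorphism and in particular a closed immersion, giving separatedness. Closed immersions are of finite type (locally they are quotients by ideal sheaves), and they are stable under arbitrary base change, with image closed in the target; this is universal closedness. For (2), I would verify in turn that each of \emph{finite type}, \emph{separated}, and \emph{universally closed} is closed under composition. Only the last is non-formal: given universally closed $f:X\to Y$ and $g:Y\to Z$ and an arbitrary base change $Z'\to Z$, the map $(g\circ f)\times_Z Z'$ factors as the composition of the base changes of $f$ and of $g$, each of which is closed on underlying topological spaces, and the composition of two topologically closed maps is closed.

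For (3), I would simply observe that $g\times\mathrm{id}_Z:X\times Z\to Y\times Z$ is the base change of $g$ along the projection $Y\times Z\to Y$, so it inherits properness from the general fact that each of the three defining conditions is preserved under arbitrary base change. For (4), the forward implication is immediate from (1) together with (2). For the converse, I would assume $f\circ g:X\to Z$ is proper and factor $g$ through its graph as
\[
    X\xrightarrow{\Gamma_g} X\times_Z Y\xrightarrow{p_2} Y,
\]
where $\Gamma_g=(\mathrm{id}_X,g)$. Because $f$ is a closed immersion, it is separated by (1), which forces $\Gamma_g$ to be a closed immersion and hence proper. The projection $p_2$ is the base change of $f\circ g$ along $f:Y\to Z$, hence proper, and so $g$ is proper by (2).

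The only place a little care is needed is the graph-factorization in (4): one has to identify $p_2$ as the base change of $f\circ g$ along $f$ rather than vice versa, and to use separatedness of the target (guaranteed here by $f$ being a closed immersion) to conclude that $\Gamma_g$ is closed. Everything else is routine propagation of the three defining properties.
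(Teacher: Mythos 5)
Your proof is correct, and it is exactly the standard argument the paper is implicitly invoking: the paper states these four properties without proof, citing Hartshorne, Ch.\ II \S4, and your verification (stability of separated/finite type/universally closed under composition and base change, plus the graph factorization $g=p_2\circ\Gamma_g$ with $\Gamma_g$ closed because the closed immersion $f$ is separated) is precisely the cited proof. Nothing to add.
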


\begin{mainTheorem}\label{bir0}
The maps $\theta^Q_\w$ and $\tau_w$ are proper and birational, and the composite map $\theta_\w\define\tau_w\circ\theta_w^Q$ is a $B$-equivariant resolution of singularities $\theta_\w:\reso\rightarrow\con$.
The map $\theta_\w$ is independent of the choice of $Q$.
\end{mainTheorem}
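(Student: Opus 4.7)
The plan is to verify properness and birationality of $\theta^Q_\w$ and $\tau_w$ separately and then combine. First observe that $\reso$ is smooth, being a vector bundle over the smooth projective Bott-Samelson variety $\bsw$.

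To handle $\theta^Q_\w$, I would verify that the left square in the commutative diagram of \cref{tau} is cartesian, identifying $\reso$ with the fibre product $\bsw\times_{X_w^Q}Z_w^Q$. Both are vector bundles over $\bsw$ with fibre $\u_w$, and the natural map $(\underline p,x)\mapsto([\underline p],[p_1\cdots p_r,x])$ is an isomorphism on fibres, hence an isomorphism of vector bundles. Since $\bsw$ is projective and $X_w^Q$ is separated, $\rho^Q_\w$ is proper, so its base change $\theta^Q_\w$ is proper as well. Birationality follows from the same cartesian description together with the fact that $\rho^Q_\w$ restricts to an isomorphism $(\bsw)^\circ\xrightarrow{\sim}C_w^Q$ on Schubert cells (\cref{bsw}); consequently $\theta^Q_\w$ restricts to an isomorphism $\resO\xrightarrow{\sim}(Z_w^Q)^\circ$.

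For $\tau_w$, I would argue that the global map $\tau:G\times^Q\u\to G\times^P\u=T^*X$ is a Zariski-locally trivial fibre bundle whose fibre is the projective variety $P/Q$, and is therefore proper. Using \cref{defn:zwq}, the subvariety $Z_w^Q=\overline{BwQ}\times^Q\u_w$ is closed in $G\times^Q\u$, because $\overline{BwQ}$ is closed in $G$ and $\u_w$ is $Q$-stable. Combined with the inclusion $\tau(Z_w^Q)\subset\con$ established in \cref{tau} and the closedness of $\con\subset T^*X$, the parts of \cref{proper0} then give properness of $\tau_w$. Birationality is immediate from \cref{tau}, where it is shown that $\tau_w$ restricts to an isomorphism $(Z_w^Q)^\circ\xrightarrow{\sim}T_X^*C_w$ onto the open dense conormal bundle of the cell.

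The composite $\theta_\w=\tau_w\circ\theta^Q_\w$ is therefore proper and birational, and since $\reso$ is smooth, it is a resolution of singularities. The $B$-equivariance of $\theta_\w$ is automatic from the left $B$-action on the first factor $P_1$, which intertwines with left translation on $T^*X$. Finally, the composite admits the explicit formula $\theta_\w(\underline p,x)=[p_1\cdots p_r,x]\in G\times^P\u=T^*X$, which makes no reference to $Q$, giving independence. The main technical step requiring care is the cartesian identification $\reso\cong\bsw\times_{X_w^Q}Z_w^Q$, where the several quotient conventions must be tracked consistently; once in place, every other assertion reduces to the standard properties of proper maps recorded in \cref{proper0}.
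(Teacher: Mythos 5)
Your proof is correct, but the properness argument takes a different route than the paper's. The paper proves properness of $\theta^Q_\w$ and $\tau_w$ uniformly by embedding the commutative diagram of \cref{tau} into the diagram \cref{diag2} of closed immersions $f,g,h$ into $\bsw\times\Ni$, $X_w^Q\times\Ni$, $X_w\times\Ni$; it then observes that $\rho^Q_\w\times\operatorname{id}_{\Ni}$ and $\operatorname{pr}\times\operatorname{id}_{\Ni}$ are proper, and concludes via \cref{proper0}(4). You instead prove properness of $\theta^Q_\w$ by first establishing that the left square in \cref{tau} is cartesian (which is true, but must be verified---both sides are vector bundles over $\bsw$ with fibre $\u_w$, and the natural map is a bundle isomorphism precisely because $\u_w$ is $Q$-stable, so that conjugating a fibre element by the difference of two lifts of a point of $X^Q_w$ lands back in $\u_w$), and then invoking stability of properness under base change; and you prove properness of $\tau_w$ by exhibiting $\tau:G\times^Q\u\to G\times^P\u$ as a $\nicefrac PQ$-fibre bundle, restricting to the closed subvariety $Z^Q_w$, and landing in the closed subvariety $\con$. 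Both routes are valid; yours is more hands-on and exposes additional structure (the cartesian square, the fibre bundle), while the paper's is more uniform and leans entirely on \cref{proper0}. Your argument for $Q$-independence via the explicit formula $\theta_\w(\underline p,x)=[p_1\cdots p_r,x]\in G\times^P\u$ is cleaner and more direct than the paper's, which deduces it from the fact that $(\operatorname{pr}\times\operatorname{id})\circ(\rho^Q_\w\times\operatorname{id})=\rho_\w\times\operatorname{id}$ is $Q$-independent and then pulls back through the closed immersions. The birationality argument and the conclusion (smoothness of $\reso$ plus proper birational gives a resolution) are the same in both.
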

\begin{proof}
The birationality of $\tau_w$ is a consequence of \cref{work50}.
Recall from \cref{bsw} that $\rho^Q_\w$ induces an isomorphism $(\bsw)^\circ\xrightarrow\sim C^Q_w$.
Consequently, $\theta^Q_\w$ induces an isomorphism $\resO\xrightarrow\sim(Z^Q_w)^\circ$.
It follows that $\theta^Q_\w$ is birational.

Consider now the commutative diagram\begin{equation}\label{diag2}
\begin{tikzcd}[column sep=large]
    \reso\arrow[d,hook,"f"]\arrow[r,"\theta^Q_\w"]                  &Z_w^Q\arrow[r,"\tau_w"]\arrow[d,hook,"g"]                                  &T_X^*X_w\arrow[d,hook,"h"]\\
    \bsw\times\Ni\arrow[r,"\rho^Q_\w\times\operatorname{id}_{\Ni}"] &X_w^Q\times\Ni\arrow[r,"\operatorname{pr}\times\operatorname{id}_{\Ni}"]   &X_w\times\Ni,
\end{tikzcd}
\end{equation}
where $f,g,h$ are the closed immersions given by\begin{align*}
    f(p_1,\cdots,p_r,x) &=(\pi_\w(p_1,\cdots,p_r,x),Ad(p_1\cdots p_r)x),\\
    g(a,x)              &=(\pi^Q_w(a,x),Ad(a)x),\\
    h(a,x)              &=(\pi_w(a,x),Ad(a)x).
\end{align*}
Observe that the map, $$(\operatorname{pr}\times\operatorname{id}_{\Ni})\circ(\rho^Q_\w\times\operatorname{id})=\rho_\w\times\operatorname{id}_{\Ni},$$
is independent of the choice of $Q$, and therefore, the map $\theta_\w=\tau_w\circ\theta_\w^Q$ is also independent of the choice of $Q$.

Next, the maps $\rho^Q_\w$ and $\operatorname{pr}$ are proper; hence $\rho^Q_\w\times\operatorname{id}_{\Ni}$ and $\bsw\times\Ni$ are proper.
Consequently, $\theta_\w^Q$ and $\tau_w$ are proper.

Finally, observe that ${\widetilde{Z_{\w}}}$, being a vector bundle over the smooth variety $\bsw$, is itself a smooth variety.
Therefore, the map $\theta_\w$ is a resolution of singularities.
\end{proof}

\section{The Type A Grassmannian}
\label{sec:sln}
In this section, we recall the classical theory of Schubert varieties in type A. 
Further, for any Schubert variety $X_w\subset Gr(d,n)$, we choose a particular standard parabolic subgroup $Q$ satisfying \cref{defn:Q}. 
This choice of $Q$ allows us to present a uniform proof of \cref{finalEquations} in \cref{sec:equations}.
The primary reference for this section is \cite{MR2388163}.

\subsection{The Root System of $SL_n$}
\label{rootSystemA}
Let $\E n.$ be a $n$--dimensional vector space with privileged basis $\{e_1,\cdots,e_n\}$. 
The group $G=SL_n$ acts on \E n. by left multiplication with respect to the basis $e_1,\cdots,e_n$.
The Lie algebra \lgl of $G$ is precisely the set of traceless $n\times n$ matrices, i.e.,\[
    \lgl=\set{x\in\operatorname{Mat}_n(\k)}{\operatorname{trace}(x)=0}.
\]
Let $\mathfrak t$ be the set of diagonal matrices in $\mathfrak g$. 
For $1\leq i\leq n$, let $\epsilon_i\in\mathfrak t^*$ be the linear functional given by \[
    \left\langle\epsilon_i,\sum\limits_{j=1}^n a_jE_{j,j}\right\rangle=a_i,
\]
where $E_{j,j}$ is the diagonal matrix with entry $1$ in the $j^{th}$ position and zero elsewhere.

We fix $B$ to be the set of upper triangular matrices in $G$. 
The Lie algebra $\mathfrak b$ of $B$ is then precisely the set of upper triangular matrices in \lgl.
The root system $\roots..$ of $\mathfrak g$ with respect to $(\mathfrak b,\mathfrak t)$ is precisely,\begin{align*}
	\roots..&=\left\{\epsilon_i-\epsilon_j\mid\,1\leq i\neq j\leq n\right\},
\end{align*}
with the simple root $\alpha_i\in S=\mathsf A_{n-1}$ being given by $\alpha_i=\epsilon_i-\epsilon_{i+1}$.
The root $\epsilon_i-\epsilon_j$ is positive if and only if $i<j$. 

We denote by $E_{i,j}$ the elementary $n\times n$ matrix with $1$ in the $(i,j)$ position and $0$ elsewhere; and by $[E_{i,j}]$ the one-dimensional subspace of \lgl spanned by $E_{i,j}$.
Then $[E_{i,j}]$ is precisely the root space corresponding to the root $\epsilon_i-\epsilon_j$.

\subsection{Partial Flag Varieties}
Let $\underline q=(q_0,\cdots,q_r)$ be an integer-valued sequence satisfying $0=q_0\leq q_1\leq\cdots\leq q_r=n$. 

For $0\leq i\leq n$, we denote by \E i., the subspace of \E n. with basis $e_1,\cdots,e_i$, and by $\E\underline q.$, the partial flag $\E q_0.\subset\cdots\subset\E q_r.$. 
Let $Q$ be the parabolic subgroup of $SL_n$ corresponding to the subset $S_Q=\set{\alpha_j}{j\neq q_i,\,1\leq i\leq r}$.
The variety $X^Q=\nicefrac GQ$ is precisely the variety of \emph{partial flags of shape $\underline q$},\begin{equation*}\label{flag1}
    X^Q=\set{\F q_0.\subset\cdots\subset\F q_r.}{\dim\F q_i.=q_i}.
\end{equation*}
For brevity, we will denote a partial flag $\F q_0.\subset\cdots\subset\F q_r.$ of shape $\underline q$ by $\F\underline q.$.

As a particular example, let $P$ be the standard parabolic subgroup corresponding to the subset $S\backslash\{\alpha_d\}$. 
Then $X^d:=\nicefrac GP$ is precisely,
\begin{align*}
	X^d	=Gr(d,n)=\left\{V\mid\dim V=d\right\}. 
\end{align*}

\subsection{The Weyl Group}
The Weyl group of $G$ is isomorphic to $S_n$, the symmetric group on $n$ elements. 
The action of $W$ on \roots.. is given by the formula, $$w(\epsilon_i-\epsilon_j)=\epsilon_{w(i)}-\epsilon_{w(j)}.$$
In particular, $w(\epsilon_i-\epsilon_j)>0$ if and only if $w(i)<w(j)$.

The set of \emph{minimal representatives} with respect to $Q$ is given by,\begin{align}\label{sln:minRep}
    S_n^Q=\set{w\in S_n}{w(q_i+1)<w(q_i+2)<\cdots<w(q_{i+1}),\forall\,0\leq i\leq r}.
\end{align}

\subsection{Schubert Varieties} 
For $w\in S_n$, let $\mw (i,j)$ be the number of non-zero entries in the top left $i\times j$ sub-matrix of the permutation matrix $\sum E_{w(k),k}$, i.e., \begin{equation}\label{defn:bi}\begin{split}
	\mw(i,j)&\define\#\left\{w(1),\cdots,w(j)\right\}\cap\left\{1,\cdots,i\right\}\\
			&=\#\set{(k,w(k))}{k\leq j, w(k)\leq i}.
\end{split}
\end{equation}

The Schubert cells, $C^Q_w\subset X^Q$ and $C^d_w\subset X$, are given by,\begin{align*}
	C_w^Q&=\set{\F\underline q.\in X^Q}{\dim(F(q_i)\cap E(j))=\mw (j,q_i),\,1\leq j\leq n,1\leq i\leq l},\\
	C^d_w&=\set{V\in X^d}{\dim(V\cap E(j))=\mw (j,d),\,1\leq j\leq n},
\end{align*}
while the Schubert varieties, $X_w^Q\subset X^Q$ and $X^d_w\subset X^d$, are given by
\begin{equation}\label{def:Xw}\begin{split}
	X^Q_w	&=\set{\F\underline q.\in X^Q}{\dim\left(\F q_i.\cap\E j.\right)\geq\mw (j,q_i),\,1\leq j\leq n,1\leq i\leq l},\\
	X^d_w	&=\set{V\in X}{\dim\left(V\cap\E j.\right)\geq \mw (j,d),\,1\leq j\leq n}.
\end{split}
\end{equation}
In particular, we have $\F\underline q.\in X^Q_w$, if and only if $\F q_i.\in X^{q_i}_w$ for all $i$.

\subsection{The Projection Map}\label{proMap}
Suppose $d=q_i$ for some $i$, and consider the projection map 
$\operatorname{pr}_d:X^Q\rightarrow X^d$,
given by 
$\F\underline q.\mapsto \F d.$.

For any $w\in S_n$, we have $\operatorname{pr}_d(X_w^Q)=X^d_w$.
Further, if $w$ satisfies\begin{equation}\begin{split}\label{maxRep}
	w(1)>\cdots>w(d),\\
	w(d+1)>\cdots>w(n),
\end{split}
\end{equation}
then $\operatorname{pr}_d^{-1}(X_w^d)=X_w^Q$, see \cite{MR2388163}.
Any $w\in S_n$ satisfying \Cref{maxRep} is called a \emph{maximal representative} with respect to $P$.

The following lemmas are easy consequences of standard results on Schubert varieties.
They are used repeatedly in the proofs of \cref{rows,cols,orbitalConormal}.

\begin{lemma}\label{weak1}
Suppose we have integers $0\leq k\leq d\leq n$, a permutation $w\in S_n$, and a $k$-dimensional subspace $U\subset\E n.$. 
If $U\in X^k_w$, then 
\begin{align*}\hspace{40pt}
	\dim(U\cap\E i.)&\geq\mw(i,d)-(d-k)	&&\forall\,1\leq i\leq n.
\end{align*}
Conversely, suppose the above inequalities hold, and further, $w(1)>\cdots>w(d)$. 
Then $U\in X_w^k$.
\end{lemma}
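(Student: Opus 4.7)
The plan is to reduce both directions to the single combinatorial identity
\[
\mw(i,d) - \mw(i,k) = \#\set{a}{k < a \leq d,\ w(a) \leq i},
\]
which is immediate from \cref{defn:bi}. Since the right-hand side is obviously bounded by $d-k$, we always have $\mw(i,k) \geq \mw(i,d) - (d-k)$, with equality precisely when every $w(a)$ for $k < a \leq d$ satisfies $w(a) \leq i$.

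For the forward implication, I would simply combine this observation with the defining inequalities of $X^k_w$ from \cref{def:Xw}: if $U \in X^k_w$, then for every $i$,
\[
\dim(U \cap \E i.) \geq \mw(i,k) \geq \mw(i,d) - (d-k),
\]
which is exactly the claim.

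For the converse, fix $i$ and split into cases according to whether $w(k+1) \leq i$ (the boundary case $k = d$ is trivial, since there $d - k = 0$ and the hypothesis is already the definition of $X^d_w$). If $w(k+1) \leq i$, the strict-decrease hypothesis $w(k+1) > \cdots > w(d)$ forces every $w(a)$ with $k < a \leq d$ to be $\leq w(k+1) \leq i$, so the combinatorial identity becomes the equality $\mw(i,k) = \mw(i,d) - (d-k)$, and the assumed inequality supplies $\dim(U \cap \E i.) \geq \mw(i,k)$ directly. If instead $w(k+1) > i$, monotonicity gives $w(1) > \cdots > w(k+1) > i$, so none of $w(1), \ldots, w(k)$ is $\leq i$; hence $\mw(i,k) = 0$ and the required inequality is automatic.

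I do not anticipate any serious obstacle: the argument is essentially bookkeeping once the combinatorial identity is noticed. The only mild care needed is checking that the case split on $w(k+1)$ versus $i$ is genuinely exhaustive (in particular handling the boundary $k = d$ where no $w(k+1)$ intervenes), and noting that the strict-decrease hypothesis is genuinely required only in the converse direction, where it is what allows a pointwise condition $w(k+1) \leq i$ to propagate through all of $w(k+1),\ldots,w(d)$.
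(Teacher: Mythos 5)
Your proof is correct and follows essentially the same route as the paper, which condenses the whole argument into the single observation that $\mw(i,k)\geq\max\{0,\mw(i,d)-(d-k)\}$, with equality for all $i$ when $w(1)>\cdots>w(d)$; your case split on $w(k+1)\leq i$ versus $w(k+1)>i$ is precisely the expansion of that $\max\{0,\cdot\}$. The added care about the boundary case $k=d$ and about exhaustiveness is sound but does not change the substance.
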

\begin{proof}
Observe that for $1\leq i\leq n$, we have,\begin{align*}\hspace{40pt}
	\mw(i,k)\geq\max\{0,\mw(i,d)-(d-k)\},
\end{align*}
with equality holding for all $i$ if and only if $w(1)>\cdots>w(d)$. 
\end{proof}

\begin{lemma}\label{weak2}
Consider integers $0\leq k\leq d\leq n$, and a permutation $w\in S_n$, satisfying $w(k+1)>\cdots>w(n)$. 
Given $U\in X^d$, we have $U\in X^d_w$, if and only if,\begin{align*}\hspace{40pt}
	\dim(U\cap\E i.)&\geq\mw(i,k)	&&\forall\,1\leq i\leq n.
\end{align*}
\end{lemma}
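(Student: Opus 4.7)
The plan is to split into forward and backward directions, with the nontrivial content concentrated in the backward direction. The forward direction is immediate: if $U\in X^d_w$, then \cref{def:Xw} gives $\dim(U\cap\E i.)\geq\mw(i,d)$, and since $\mw(i,j)$ is monotonically non-decreasing in $j$, we have $\mw(i,d)\geq\mw(i,k)$ for every $i$.

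For the converse, my strategy is to sandwich $\mw(i,d)$ between two a priori lower bounds on $\dim(U\cap\E i.)$. The first is the hypothesis $\dim(U\cap\E i.)\geq\mw(i,k)$; the second is the standard dimension-count bound $\dim(U\cap\E i.)\geq d+i-n$, valid because $U$ and $\E i.$ are subspaces of $\E n.$ of dimensions $d$ and $i$ respectively. It therefore suffices to establish the identity
\begin{align*}
    \mw(i,d)=\max\bigl(\mw(i,k),\,d+i-n\bigr)
\end{align*}
for all $1\leq i\leq n$ under the hypothesis $w(k+1)>\cdots>w(n)$; combined with the two lower bounds (and $\mw(i,k)\geq 0$), this yields $\dim(U\cap\E i.)\geq\mw(i,d)$, which is exactly the defining condition of $X^d_w$ from \cref{def:Xw}.

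The main step, and the one requiring care, is proving this identity; everything else is bookkeeping. The key observation is that because $w(k+1)>w(k+2)>\cdots>w(n)$ is strictly decreasing, the set $\{j\in(k,n]:w(j)\leq i\}$ is a tail of the form $\{j^*+1,\ldots,n\}$ for a unique threshold $j^*\in\{k,\ldots,n\}$. Using $\#\{j\leq n:w(j)\leq i\}=i$ together with the definition of $\mw$ from \cref{defn:bi}, a direct count forces $j^*=n-i+\mw(i,k)$, and then
\begin{align*}
    \mw(i,d)-\mw(i,k)=\#\{k<j\leq d:w(j)\leq i\}=\max(0,\,d-j^*)=\max\bigl(0,\,d+i-n-\mw(i,k)\bigr),
\end{align*}
which rearranges to the claimed identity. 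The only subtlety is verifying that $j^*$ actually lies in $[k,n]$, which follows from the elementary bounds $\max(0,i+k-n)\leq\mw(i,k)\leq\min(i,k)$.
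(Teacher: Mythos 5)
Your proof is correct, and it takes a genuinely different route from the paper's. The paper argues geometrically: since $\dim(U\cap E(i))$ increases from $0$ to $d$ in unit steps, there is a $k$-dimensional intermediate subspace $V=U\cap E(i)\subset U$; one checks (directly from the hypothesis) that $V\in X^k_w$, then invokes the projection facts from \cref{proMap} --- after normalizing $w$ so that $w(1)>\cdots>w(k)$, the preimage $\operatorname{pr}_k^{-1}(X_w^k)$ is the full Schubert variety $X_w^Q$ in the $(k,d)$-flag variety, whose image under $\operatorname{pr}_d$ is $X_w^d$, forcing $U\in X_w^d$. Your argument instead stays entirely at the level of rank conditions: you prove the clean combinatorial identity
\begin{align*}
    \mw(i,d)=\max\bigl(\mw(i,k),\,d+i-n\bigr)\qquad\text{whenever }w(k+1)>\cdots>w(n),
\end{align*}
using the fact that decreasingness on $(k,n]$ makes $\{j>k:w(j)\leq i\}$ a tail, and then combine it with the always-available bound $\dim(U\cap\E i.)\geq d+i-n$. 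I checked the identity and the threshold bookkeeping ($j^*=n-i+\mw(i,k)\in[k,n]$), and both are right. What your approach buys is self-containment --- it avoids the surjectivity/fibration machinery for flag variety projections, at the cost of an explicit numerical computation with $\mw$; the paper's approach is shorter given the projection lemma already developed in \cref{proMap}, and fits the general strategy used in \cref{strong} of manufacturing intermediate flags.
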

\begin{proof}
There exists $1\leq i\leq n$ such that $\dim(U\cap \E i.)=k$.
Set $V=U\cap\E i.$. 
It is clear that $V\in X^k_w$. 
Now, since the statement of the lemma only involves $w$ via the integers $\mw(i,k)$ and $\mw(i,d)$, we may assume without loss of generality that $w(1)>\cdots>w(k)$.

Let $Q$ be the parabolic group corresponding to the sequence $\underline q=(k,d)$.
Then, we have $\operatorname{pr}_k(V\subset U)=V$ and $\operatorname{pr}_d(V\subset U)=U$.
It follows from \Cref{proMap} that,\[
	\operatorname{pr}_k^{-1}(X_w^k)=X_w^Q\implies\operatorname{pr}_d(\operatorname{pr}_k^{-1}(X_w^k))=X_w^d.
\]
Consequently, we obtain $U\in X^d_w$.
\end{proof}

\begin{proposition}
\label{strong}
Consider integers $0\leq k\leq d\leq m\leq n$, and a permutation $w\in S_n$.
Given subspaces $U\subset V\subset\E n.$ satisfying $\dim U=k$, and 
\begin{align*}\hspace{40pt}
	\dim(U\cap\E i.)&\geq\mw(i,d)-(d-k) &&\forall\,1\leq i\leq n,\\
    \dim(V\cap\E i.)&\geq\mw(i,m)   &&\forall\,1\leq i\leq n,
\end{align*}
there exists $U'\in X_w^d$ satisfying $U\subset U'\subset V$.
\end{proposition}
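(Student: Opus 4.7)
The plan is to induct on $d-k$, building $U'$ from $U$ by adding one vector at a time. The base case $d=k$ is immediate: take $U'=U$; the hypothesis specializes to $\dim(U\cap\E i.)\geq\mw(i,d)$, which is exactly the defining condition of $X^d_w$ from \eqref{def:Xw}.

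For the inductive step with $d>k$, I aim to produce a single vector $v\in V$ such that $U_1\define U+\langle v\rangle$ satisfies the proposition's hypotheses with $k$ replaced by $k+1$; inductively applied, this yields the desired $U'$ with $U\subset U_1\subset U'\subset V$. The required new bound is $\dim(U_1\cap\E i.)\geq\mw(i,d)-(d-k-1)$ for every $i$. Let $I=\{i:\dim(U\cap\E i.)=\mw(i,d)-(d-k)\}$ be the \emph{critical} positions where equality holds for $U$. If $I=\emptyset$, any $v\in V\setminus U$ works, so assume $I\neq\emptyset$ and set $i_1=\min I$; then the candidate set for $v$ is $V\cap(\E i_1.+U)\setminus U$.

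The main obstacle is showing that this candidate set is nonempty. Using $U\subset V$ to identify $(V\cap\E i_1.)\cap U=U\cap\E i_1.$, I would compute
\[
    \dim\bigl((V\cap\E i_1.)+U\bigr)=\dim(V\cap\E i_1.)+\dim U-\dim(U\cap\E i_1.)\geq\mw(i_1,m)+k-\bigl(\mw(i_1,d)-(d-k)\bigr)\geq d,
\]
since $\mw(i_1,m)\geq\mw(i_1,d)$ from $m\geq d$. As $d>k=\dim U$, the subspace $(V\cap\E i_1.)+U$, which is contained in $V\cap(\E i_1.+U)$, strictly contains $U$; picking any $v$ in the difference supplies the required vector.

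It remains to verify the new dimension bound at every $i$. For $i<i_1$, minimality of $i_1$ forces strict inequality in the hypothesis, so $\dim(U\cap\E i.)\geq\mw(i,d)-(d-k)+1=\mw(i,d)-(d-k-1)$, and this bound transfers to $U_1$ since $U\subset U_1$. For $i\geq i_1$, write $v=e+u$ with $e\in V\cap\E i_1.\subset\E i.$ and $u\in U$; then $e=v-u$ lies in $U_1\cap\E i.$ but not in $U$ (otherwise $v\in U$), giving $\dim(U_1\cap\E i.)\geq\dim(U\cap\E i.)+1\geq\mw(i,d)-(d-k-1)$, which completes the induction.
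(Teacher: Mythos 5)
Your proof is correct, but it takes a genuinely different route than the paper's. The paper reduces the statement to a fact about Schubert varieties in partial flag varieties: it replaces $w$ (without loss of generality) by a permutation that is decreasing on each of the blocks $[1,d]$, $[d+1,m]$, $[m+1,n]$, uses \cref{weak1,weak2} to interpret the hypotheses as membership in $X^{Q'}_w$ for the sequence $(k,l)$ with $l=\dim V$, and then invokes the surjectivity of the projection $X^Q_w\rightarrow X^{Q'}_w$ for the refined sequence $(k,d,l)$ — a nontrivial result cited from \cite{MR2388163} — to produce the intermediate subspace $U'=F(d)$. Your approach is a direct induction on $d-k$ using only elementary linear algebra: you track the set of critical indices $I$ where the inequality for $U$ is tight, pick $i_1=\min I$, and show that a dimension count (using $\mw(i_1,m)\geq\mw(i_1,d)$ and $\mw(n,m)=m\geq d$) forces $(V\cap\E i_1.)+U$ to properly contain $U$, supplying the vector to adjoin. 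The verification that adding this vector increases $\dim(U_1\cap\E i.)$ at each $i\geq i_1$ (because $e=v-u\in\E i_1.\cap U_1$ cannot lie in $U$) and preserves the bound for $i<i_1$ (by strictness coming from minimality of $i_1$) is carried out cleanly. Your argument is more self-contained — it makes no appeal to surjectivity of Schubert projections or to the descent-pattern normalization of $w$ — at the cost of a more hands-on induction; the paper's proof fits more naturally into the flag-variety framework used elsewhere in the article. Both are sound.
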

\begin{proof}
Set $l=\dim V$. 
Observe that\[
	l=\dim (V\cap\E n.)\geq\mw(n,m)=m.
\]
Let $Q'$ be the parabolic group corresponding to the sequence $(k,l)$, and $Q$ the parabolic group corresponding to the sequence $(k,d,l)$. 
We have a projection map $\operatorname{pr}:X^Q\rightarrow X^{Q'}$, given by $\F k,d,l.\mapsto\F k,l.$.

Since the statement of the proposition only involves $w$ via the integers $\mw(i,d)$ and $\mw(i,m)$, we may replace $w$ by any permutation $v$ satisfying\begin{align*}
	\{v(1),\cdots,v(d)\}	&=\{w(1),\cdots,w(d)\},\\
	\{v(d+1),\cdots,v(m)\}	&=\{w(d+1),\cdots,w(m)\},\\
	\{v(m+1),\cdots,v(n)\}	&=\{w(m+1),\cdots,w(n)\},
\end{align*}
without changing the statement.
In particular, we may assume that \begin{equation}\begin{split}\label{work60}
	w(1)>w(2)>\cdots>w(d),\\
	w(d+1)>\cdots>w(m),\\
	w(m+1)>\cdots>w(n).
\end{split}
\end{equation}
Using \Cref{weak1,weak2}, we deduce that $(U\subset V)\in X^{Q'}_w$.
Further, it follows from \Cref{work60} that the projection map $X^Q_w\rightarrow X^{Q'}_w$ is surjective, see \cite{MR2388163}. 
In particular, there exists a partial flag $\F k,d,l.\in X^Q_w$, for which $\F k.=U$ and $\F l.=V$.
This partial flag yields the required subspace $\F d.$, satisfying $U\subset\F d.\subset V$, and $\F d.\in X^d_w$.
\end{proof}

%
\subsection{The Numbers $r_i,c_i$}\label{rici}\label{desc:w}
{\def\d{$\ddots$}
	For integers $a,b$, let $(a,b]$ denote the sequence,\[a+1,a+2,\cdots,b.\]
We fix $w\in\W^P$.
Following \cref{sln:minRep}, we have,
\begin{align*}
    w(1)<w(2)<\ldots<w(d),  &&w(d+1)<\ldots<w(n).
\end{align*}
Consequently, $w\in S_n^P$ is uniquely identified by the sequence $w(1),\cdots,w(d)$, which we now write as the following concatenation of contiguous sequences,
\begin{align*}
	(t'_1,t_1],\ (t'_2,t_2],\ \cdots,\ (t'_l,t_l].
\end{align*}
Here the $t_i,t'_i$ are certain integers satisfying,\[
	0\leq t'_1<t_1<t'_2<\cdots<t_{l-1}<t'_l<t_l\leq n, 
\]
and $\sum(t_i-t'_i)=d$.
For convenience, we set $t_0=0$ and $t'_{l+1}=n$.
The sequence $w(d+1),\cdots,w(n)$ is precisely,
\begin{align*}
	(t_0,t'_1],\ (t_1,t'_2],\ \cdots,\ (t_l,\cdots,t'_{l+1}].
\end{align*}
Consider the partial sums $r_0,\cdots,r_l$, and $c_0,\cdots,c_{l+1}$, given by,\begin{equation}\label{defn:rc}
    r_i\define\sum\limits_{1\leq j\leq i}(t_j-t'_j),\qquad\qquad      
    c_i\define\sum\limits_{1\leq j\leq i}(t'_j-t_{j-1}). 
\end{equation}
For $1\leq i\leq l$, we have $t_i=\up ri+\up ci$.
Further, 
we have $r_l=d$, $c_{l+1}=n-d$, and\begin{equation}\label{eq:mw}\begin{split}
    \mw (t_i,r_j)    &=\min\{r_i,r_j\}=r_{\min\{i,j\}},\\
    \mw (t_i,d+c_j)  &=r_i+\min\{c_i,c_j\}=r_i+c_{\min\{i,j\}},
\end{split}
\end{equation}
for all $1\leq i,j\leq l$.
The permutation matrix of $w$ is described in \cref{fig:perm}.
\begin{figure}
\begin{tabular}{|c|c|c|c|c|c|c|}\hline
	&   &  	&\d	&	& 	&	\\\hline
\d	&	&	& 	&  	& 	&	\\\hline
	&	&   &	&\d	& 	&	\\\hline
	&\d	&	&	& 	& 	&	\\\hline
	&   &   &	&	&\d	&	\\\hline
	&   &\d	&	&	& 	&	\\\hline
	&   &  	&	&	& 	&\d	\\\hline
\end{tabular}
\caption{The permutation matrix of $w$. 
	The empty boxes are zero matrices, while the dotted cells are identity matrices of size $r_1,r_2-r_1,\cdots,r_l-r_{l-1},c_1,c_2-c_1,\cdots,c_{l+1}-c_l$, going left to right. 
	}
\label{fig:perm}
\end{figure}
}

\begin{proposition}\label{choice:QA}
Consider the sequence $\underline q=(q_0,\cdots,q_{2l+1})$ given by\begin{align*}\label{defn:q}
    q_i=\begin{cases}\up ri &\text{for }0\leq i\leq l,\\
                d+\up c{i-l}&\text{for }l<i\leq 2l+1.
        \end{cases}
\end{align*}
Let $Q$ be the standard parabolic subgroup associated to the sequence $\underline q$, in the sense of \cref{flag1}.
Then $Q$ satisfies \cref{defn:Q}, i.e., $Q\subset Stab_G(\u_w)\cap P$.
\end{proposition}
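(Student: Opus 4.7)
The plan is to reduce the proposition to two independent checks: (i) $Q \subset P$, and (ii) $Q$ stabilizes $\u_w$. Check (i) is immediate: by construction $\up ql = \up rl = d$ appears among the entries of $\underline q$, so the simple root $\alpha_d$ is excluded from $S_Q$, giving $S_Q \subset S \setminus \{\alpha_d\} = S_P$ and hence $Q \subset P$.

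For (ii), I would proceed in two stages. First, I would unpack the description of $\u_w$ in the type A setting. Since $\gamma = \alpha_d$, the space $\u$ consists of the root spaces $[E_{i,j}]$ with $i \leq d < j$, and \cref{defn:uw} identifies $\u_w$ with the subsum over those pairs additionally satisfying $w(i) < w(j)$. Using the concatenated block description of $w$ from \cref{desc:w}, a short combinatorial check shows that for $i \leq d < j$, the inequality $w(i) < w(j)$ is equivalent to $a(i) < b(j)$, where $a(i)$ is the unique integer with $i \in (\up r{a(i)-1}, \up r{a(i)}]$ and $b(j)$ is the unique integer with $j - d \in (\up c{b(j)-1}, \up c{b(j)}]$. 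Thus $\u_w$ carries a natural block structure with row breakpoints $\up r1, \ldots, \up r{l-1}$ and column breakpoints $d + \up c1, \ldots, d + \up cl$.

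Second, with this description in hand, since $B$ already stabilizes $\u_w$ by \cref{defn:uw} and $Q$ is generated by $B$ together with the negative simple root subgroups $U_{-\alpha_k}$ for $\alpha_k \in S_Q$, it suffices to verify $[E_{k+1,k}, \u_w] \subset \u_w$ for each such $k$. The bracket $[E_{k+1,k}, E_{i,j}] = \delta_{i,k} E_{k+1, j} - \delta_{j, k+1} E_{i, k}$ is nonzero only if $i = k$ or $j = k+1$; in either case, $\alpha_k \in S_Q$ is exactly the statement that $k$ is not one of the breakpoints $\up r1, \ldots, \up r{l-1}, d, d + \up c1, \ldots, d + \up cl$ of $\underline q$, so $k$ and $k+1$ lie in the same row- or column-block. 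The pair $(a, b)$ is thus preserved, and the resulting matrix unit remains in $\u_w$.

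The main obstacle is the bookkeeping needed to establish the block description of $\u_w$; once this is done, the $Q$-stability check is essentially a tautology, since the breakpoints of $\underline q$ were chosen precisely to coincide with those of this block structure.
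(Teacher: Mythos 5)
Your proposal is correct and follows essentially the same route as the paper: both arguments reduce to observing that $q_l=d$ (so $Q\subset P$) and that $\u_w$ has a block description whose breakpoints are exactly the entries of $\underline q$, equivalently $\u_w=\set{x\in\lgl}{x\E q_{l+i}.\subset\E q_{i-1}.,\ \forall\, 1\leq i\leq l+1}$. The only difference is in the last step, where the paper concludes immediately from the fact that $Q$ stabilizes the flag $\E{\underline q}.$, whereas you verify stability generator-by-generator via brackets with $E_{k+1,k}$ — a correct but slightly more laborious finish.
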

\begin{proof}
Observe that $q_l=d$; hence, we have $Q\subset P$.
It remains to show that $Q$ stabilizes $\u_w$.
Recall the set $R$ and the subspace $\u_w$ from \cref{defn:uw}.
It follows from \cref{desc:w} that\begin{equation}\label{uw:sln}\begin{split}
    R   &=\set{\epsilon_i-\epsilon_j}{\exists k\text{ such that }i\leq q_k,j>q_{k+l}},\\ 
    \u_w&=\set{x\in\lgl}{x\E q_{l+i}.\subset\E q_{i-1}.,\,\forall\,1\leq i\leq l+1}.
\end{split}
\end{equation}
Now, since $Q$ stabilizes the flag $\E\underline q.$, it also stabilizes $\u_w$.
\end{proof}

\section{The Symplectic Grassmannian}
\label{sec:C}
In this section, we recall some facts about Schubert subvarieties of the symplectic Grassmannian.
In particular, for any Schubert variety $X_w$ in the symplectic Grassmannian, we choose a particular standard parabolic subgroup $Q$ satisfying \cref{defn:Q}. 
This choice of $Q$ allows us to present a uniform proof of \cref{finalEquations} in \cref{sec:equations}.
The primary reference for this section is \cite{MR2388163}.

\subsection{The Bilinear Form}
Let $\E 2d.$ be a $2d$--dimensional vector space with a privileged basis $\{e_1,\cdots,e_{2d}\}$. 
For $1\leq i\leq 2d$, we define,$$\overline i\define 2d+1-i.$$
Consider the non-degenerate skew-symplectic bilinear form $\ws$ on \E 2d. given by,\begin{align*}
	\ws(e_i,e_j)=\begin{cases}\delta_{i,\overline j}&\text{if }i\leq d,\\
					-\delta_{i,\overline j}&\text{if }i>d.
				\end{cases}
\end{align*}
For $V$ a subspace of \E 2d., we define,\[
	V^\perp=\set{u\in\E 2d.}{\omega(u,v)=0,\ \forall\,v\in V}.
\]	
A simple calculation yields $\E i.^\perp=\E{2d-i}.$, for $1\leq i\leq 2d$.
Further, as a consequence of the non-degeneracy of $\ws$, we have the formulae,\begin{align}\label{eq:perp}
	\dim V+\dim V^\perp	=2d,&&
	U^\perp\cap V^\perp	=(U+V)^\perp,
\end{align}
and $(V^\perp)^\perp=V$, for any subspaces $U,V\subset\E 2d.$.

\subsection{The Symplectic Group $Sp_{2d}$}\label{lglC}
Let $G=Stab_{SL_{2d}}(\ws)$, i.e.,\[
	G=\set{g\in SL_{2d}}{\omega(gu,gv)=\omega(u,v),\,\forall\,u,v\in\E 2d.}.
\]
The group $G$ is the symplectic group $Sp_{2d}$. 
Its Lie algebra \lgl is given by\[
	\lgl=\set{x\in\mathfrak{sl}_{2d}}{\omega(xu,v)+\omega(u,xv)=0,\ \forall\,u,v\in\E 2d.}.
\]
Let $T'$ (resp. $\mathfrak t'$) be the set of diagonal matrices, and $B'$ (resp. $\mathfrak b'$) the set of upper triangular matrices in $SL_{2d}$ (resp. $\mathfrak{sl}_{2d}$).
The subgroup $T=T'\cap G$ is a maximal torus in $G$, and the subgroup $B=B'\cap G$ is a Borel subgroup of $G$.

\subsection{The Root System of $Sp_{2d}$} 
The group $G$ is a simple group with Dynkin diagram $\mathsf C_d$.
Recall from \cref{rootSystemA}, the linear functionals $\epsilon_1,\cdots,\epsilon_{2d}$ on $\mathfrak t'$.
By abuse of notation, we also denote by $\epsilon_i$, the restriction $\epsilon_i|\mathfrak t$.

Following \cite{MR2388163}, we present the root system of $G$ with respect to $(B,T)$.
The simple root $\alpha_i\in S=\mathsf C_d$ is given by \begin{align*}
	\alpha_i=\begin{cases}\epsilon_i-\epsilon_{i+1}&\text{for }1\leq i<d,\\ 2\epsilon_d&\text{for }i=d.\end{cases}
\end{align*}
The set of roots \roots.., and the set of positive roots \roots+.., are given by
\begin{align*}
	\roots..	&=\set{\pm\epsilon_i\pm\epsilon_j}{1\leq i\neq j\leq d}\sqcup\set{\pm 2\epsilon_i}{1\leq i\leq d},\\
	\roots+..	&=\set{\epsilon_i\pm\epsilon_j}{1\leq i<j\leq d}\sqcup\set{2\epsilon_i}{1\leq i\leq d}.
\end{align*}
The corresponding root spaces are given by $\lgl_{2\epsilon_i}=[E_{i,\overline i}]$, $\lgl_{-2\epsilon_i}=[E_{\overline i,i}]$, 
\begin{align*}
	\lgl_{\epsilon_i+\epsilon_j}=[E_{i,\overline j}+E_{j,\overline i}],	&&\lgl_{-\epsilon_i-\epsilon_j}=[E_{\overline i,j}+E_{\overline j,i}],	&&\lgl_{\epsilon_i-\epsilon_j}=[E_{i,j}-E_{\overline j,\overline i}].
\end{align*}

\subsection{The Weyl Group}
Let $s_1,\cdots,s_d$ denote the simple reflections in Weyl group $W$ of $G$, and let $r_1,\cdots,r_{2d-1}$ denote the simple reflections of $S_{2d-1}$.
We have an embedding $W\hookrightarrow S_{2d}$, given by,\[
	s_i\mapsto\begin{cases}
		r_ir_{2d-i}	&\text{for }1\leq i<d,\\
		r_d			&\text{for }i=d.
					\end{cases}
\]
Via this embedding, we have,
\begin{align*}
	W=\set{w\in S_{2d}}{\overline{w(i)}=w(\overline i),\,1\leq i\leq d}.
\end{align*}

The Bruhat order on $S_{2d}$ induces a partial order on $W$.
This induced order is precisely the Bruhat order on $W$.
Further, by virtue of being a subgroup of $S_{2d}$, the group $W$ acts on $\mathfrak{sl}_{2d}$.
One obtains the action of $W$ on \lgl by restricting this action.

%
\subsection{Standard Parabolic Subgroups}
\label{Cspfv}
Let $\underline q=(q_0,\cdots,q_r)$ be any integer-valued sequence satisfying $0=q_0\leq q_1\leq\cdots\leq q_r=2d$, and further, $q_i+q_{r-i}=2d$ for $1\leq i\leq r$. 
Suppose $Q'$ is the standard parabolic subgroup of $SL_{2d}$ corresponding to the subset,
$$\set{\alpha_j\in\mathsf A_{2d-1}}{j\neq q_i,\,1\leq i\leq r-1}.$$ 
Then $Q=Q'\cap G$ is the parabolic subgroup of $G$ corresponding to the subset,\[
	\set{\alpha_j\in S}{j\neq q_i,\,1\leq i\leq \left\lceil\nicefrac r2\right\rceil}.
\]
The variety $X^Q=\nicefrac GQ$ is precisely the variety of isotropic flags of shape $\underline q$, i.e.,
\begin{equation*}
	X^Q=\set{\F\underline q.\in\nicefrac{SL_{2d}}{Q'}}{\F q_i.^\perp=\F q_{r-i}.}.
\end{equation*}

As a particular example, let $P'$ be the standard parabolic subgroup of $SL_n$ corresponding to the subset $\mathsf A_{2d-1}\backslash\{\alpha_d\}$, and let $P=P'\cap G$. 
Then $P$ is the standard parabolic corresponding to $S\backslash\{\alpha_d\}$, and further,
\begin{align*}
	X	&=\nicefrac GP=\left\{V\subset\E 2d.\mid V=V^\perp\right\}. 
\end{align*}
Observe that the condition $V=V^\perp$ ensures that $\dim V=d$, see \cref{eq:perp}.

\subsection{Schubert Varieties} 
Consider an element $w\in W$.
By viewing $w$ as an element of $S_{2d}$, we define the numbers $\mw (i,k)$ precisely as in \cref{defn:bi}.
The Schubert cells $C_w$, $C^Q_w$ are then given by,\begin{align*}
	C_w^Q   &=\set{\F\underline q.\in X^Q}{\dim(F(q_i)\cap E(j))=\mw (j,q_i),\,1\leq i\leq l,1\leq j\leq n}, \\
	C_w   	&=\set{V\in X}{\dim(V\cap E(j))=\mw (j,d),\,1\leq j\leq n},
\end{align*}
and the Schubert varieties $X_w$ and $X_w^Q$ are given by,\begin{equation}\label{defCD:Xw}\begin{split}
	X^Q_w	&=\set{\F\underline q.\in X^Q}{\dim\left(\F q_i.\cap\E j.\right)\geq\mw (j,q_i),\,1\leq i\leq l,1\leq j\leq n},\\
	X_w		&=\set{V\in X}{\dim(V\cap\E i.)\geq\mw(i,d),\,1\leq i\leq n}.
\end{split}
\end{equation}
In particular, any Schubert subvariety of $X^Q$ can be identified (set-theoretically) as the intersection of a Schubert subvariety of $\nicefrac{SL_{2d}}{Q'}$ with $\nicefrac{Sp_{2d}}Q\subset\nicefrac{SL_{2d}}{Q'}$.

\subsection{Numerical Redundancy}
By viewing $w$ as an element of $S_{2d}$, we define the numbers $t_i,t'_i,r_i,c_i$ exactly as in \cref{desc:w,defn:rc}.
Observe that since $w(\overline i)=\overline{w(i)}$ for all $1\leq i\leq 2d$, the permutation matrix of $w$ is symmetric across the anti-diagonal, see \cref{fig:perm}.
Consequently, for any $0\leq i\leq l$, we have,\begin{align}\label{tiflip}
	r_i+c_{l-i}=d,	&&t_i+t_{l-i}=2d.
\end{align}
In particular, we have $\E t_i.^\perp=\E t_{l-i}.$.

The conditions defining the Schubert variety $X_w^Q\subset X^Q$, described in \cref{defCD:Xw}, are not minimal.
We describe this redundancy in the next lemma.

\begin{lemma}\label{checkHalf}
Consider $\F\underline q.\in X^Q$.
Then $\F\underline q.\in X^Q_w$ if and only if 
\begin{align*}\hspace{40pt}
	\dim\left(\F q_i.\cap\E j.\right)\geq\mw (j,q_i),	&&\quad 1\leq i\leq l,\,1\leq j\leq 2d.
\end{align*}
\end{lemma}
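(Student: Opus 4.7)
The forward direction is immediate, since the inequalities in the lemma are a subset of those defining $X^Q_w$ in \cref{defCD:Xw}. The substance lies in the reverse direction: the inequalities indexed by $1 \leq i \leq l$ and $1 \leq j \leq 2d$ already subsume the ``upper half'' conditions at indices $i > l$. My plan is to show that each such upper-half condition at index $r - i$ (with $i \leq l$) and parameter $j$ is literally equivalent to an already-assumed condition at index $i$ and parameter $2d - j$, using the symplectic symmetries of both the flag $\F\underline q.$ and the permutation $w$.

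First I would translate the dimension side via $\ws$-duality. From the isotropy $\F q_i.^\perp = \F q_{r-i}.$ of \cref{Cspfv}, the identity $\E j.^\perp = \E 2d-j.$, and the perp identities $\dim U + \dim U^\perp = 2d$ and $(U \cap V)^\perp = U^\perp + V^\perp$ recorded in \cref{eq:perp}, together with inclusion-exclusion on $\dim(U + V)$, I obtain
\[
\dim\bigl(\F q_{r-i}. \cap \E j.\bigr) \;=\; \dim\bigl(\F q_i. \cap \E 2d-j.\bigr) + j - q_i.
\]

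Next I would establish the matching combinatorial identity
\[
\mw(j, q_{r-i}) \;=\; \mw(2d - j, q_i) + j - q_i.
\]
Using $q_{r-i} = 2d - q_i$ and the counting formula of \cref{defn:bi}, I would rewrite $\mw(j, 2d - q_i) = \#\{k \leq 2d - q_i : w(k) \leq j\}$ and apply the involution $k \mapsto \overline k$ on the indexing set, which commutes with $w$ because $w \in W$ satisfies $w(\overline k) = \overline{w(k)}$; an inclusion-exclusion on the resulting complementary count yields the identity. Subtracting the two identities term-by-term, the condition at $(r-i, j)$ becomes exactly the assumed condition at $(i, 2d - j)$; since $j \mapsto 2d - j$ is a bijection of $\{1, \ldots, 2d\}$ (with the boundary cases tautological), the upper-half conditions are all redundant, completing the lemma.

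The main obstacle I anticipate is the combinatorial identity for $\mw$: the linear-algebraic side is routine once the perp identities of \cref{eq:perp} are deployed, but the $\mw$ identity requires careful bookkeeping under the involution $k \leftrightarrow \overline k$ combined with the $W$-symmetry, and small offset errors are easy to make.
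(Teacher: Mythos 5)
Your proposal is correct and follows essentially the same route as the paper: both reduce the conditions at the upper indices $r-i$ to those at $i$ by combining the perp identities of \cref{eq:perp} (applied to $\F q_i.^\perp=\F q_{r-i}.$ and $\E j.^\perp=\E 2d-j.$) with the identity $\mw(2d-j,2d-q_i)=2d-j-q_i+\mw(j,q_i)$ coming from the anti-diagonal symmetry of the permutation matrix of $w$. The only cosmetic difference is that you package the reduction as two separate equalities to be subtracted, while the paper runs a single chain of implications.
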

\begin{proof}
Since the permutation matrix of $w$ is symmetric across the anti-diagonal, the number of non-zero entries in the top left $i\times j$ corner of $w$ equals the number of entries in the bottom right $i\times j$ corner.
Further, since each row and column of this matrix has precisely one non-zero entry, we have,
\begin{align*}
	\mw(i,j)=	&\#\set{(k,w(k)}{k>2d-j,w(k)>2d-i}\\
			=	&2d-\#\left(\set{(k,w(k))}{k\leq 2d-j}\cup\set{(k,w(k))}{w(k)\leq 2d-i}\right)\\
			=	&2d-((2d-j)+(2d-i)-\mw(2d-i,2d-j)).
\end{align*}
Hence, for $1\leq i,j\leq 2d$, we have the formula, 
\begin{align*}
	2d-(i+j-\mw(i,j))&=\mw(2d-i,2d-j).
\end{align*}
Consider some $\F\underline q.\in X^Q$ satisfying the inequalities of the lemma.
Given $1\leq i\leq l$ and $1\leq j\leq 2d$, we have,\begin{equation*}\begin{aligned}
	\dim(\F q_i.\cap\E j.)				&\geq\mw(j,q_i)\\
\implies\dim(\F q_i.+\E j.)				&\leq q_i+j-\mw(j,q_i)\\
\implies\dim((\F q_i.+\E j.)^\perp)		&\geq 2d-(q_i+j-\mw(j,q_i))\\
\implies\dim(\F q_i.^\perp\cap\E 2d-j.)	&\geq\mw(2d-j,2d-q_i).
\end{aligned}
\end{equation*}
The final inequality follows from the penultimate as a consequence of \cref{eq:perp}.
We see that $\F\underline q.$ satisfies \cref{defCD:Xw}, and hence obtain $\F\underline q.\in X_w^Q$.
\end{proof}

\subsection{The Subspace $\u_w$}
Let $\mathfrak v$ be the Lie algebra of the unipotent radical of $P'$, and \u the Lie algebra of the unipotent radical of $P$.
We have\begin{align*}
	\mathfrak v	&=\bigoplus\limits_{i\leq d<j}[E_{i,j}],&&\u=\bigoplus\limits_{1\leq i<j\leq d}\lgl_{\epsilon_i+\epsilon_j}=\bigoplus\limits_{1\leq i<j\leq d}[E_{i,\overline j}+E_{j,\overline i}].
\end{align*}
In particular, we have $\u=\mathfrak v\cap\lgl$. 
Recall the subspace $\u_w$ from \cref{defn:uw}.
Since \lgl is stable under the action of $Ad(w^{-1})$, we have,\begin{align}\label{uw:cd0}
	\u_w	&=\u\cap Ad(w^{-1})\mathfrak b=(\mathfrak v\cap\lgl)\cap Ad(w^{-1})(\mathfrak b'\cap\lgl)=\mathfrak v\cap Ad(w^{-1})\mathfrak b'\cap\lgl. 
\end{align}
Let $q_0,q_1,\cdots,q_{2l+1}$ be the sequence defined by\begin{align*}
    q_i=\begin{cases}\up ri &\text{for }0\leq i\leq l,\\
                d+\up c{i-l}&\text{for }l<i\leq 2l+1.
        \end{cases}
\end{align*}
It follows from \cref{tiflip} that $q_i+q_{2l-i}=2d$ for all $1\leq i\leq 2l$.

\begin{proposition}\label{choice:QC}
Let $Q$ be the standard parabolic subgroup of $G$ associated to the sequence $\underline q=(q_0,\cdots,q_{2l})$, in the sense of \cref{Cspfv}.
Then $Q$ satisfies \cref{defn:Q}, i.e., $Q\subset Stab_G(\u_w)\cap P$.
\end{proposition}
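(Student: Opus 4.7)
My plan is to deduce the symplectic statement from its type A counterpart, \cref{choice:QA}, by exploiting the realization $G = Sp_{2d} \subset SL_{2d}$. Two parallel reductions make this clean. On the group side, by \cref{Cspfv} the parabolic $Q$ equals $Q' \cap G$, where $Q' \subset SL_{2d}$ is the type A parabolic attached to the same sequence $\underline q$. On the Lie algebra side, \cref{uw:cd0} tells us that $\u_w = \u_w^{(A)} \cap \lgl$, where $\u_w^{(A)} \define \mathfrak v \cap Ad(w^{-1})\mathfrak b'$ is the type A analog of $\u_w$.

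I would first dispatch the easy containment $Q \subset P$. Writing $P = P' \cap G$ with $P' \subset SL_{2d}$ the type A parabolic associated to $\mathsf A_{2d-1} \setminus \{\alpha_d\}$, this reduces to $Q' \subset P'$. Since $q_l = r_l = d$ appears in the sequence defining $Q'$, the excluded simple roots of $Q'$ contain $\alpha_d$, so $S_{Q'} \subset S_{P'}$, giving $Q' \subset P'$.

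Next, to establish $Ad(Q)(\u_w) \subset \u_w$, I would view $w$ as an element of $S_{2d}$. The inclusion $w \in W^P$ combined with the symmetry $w(\overline i) = \overline{w(i)}$ places $w$ in $S_{2d}^{P'}$; since the combinatorial data $r_i, c_i, t_i$ of \cref{desc:w,defn:rc} depend only on $w$ as a permutation, they are the same in the type A picture, and \cref{choice:QA} applies verbatim to yield $Q' \subset Stab_{SL_{2d}}(\u_w^{(A)})$. Since every element of $G$ preserves $\lgl$ under the adjoint action, every $q \in Q = Q' \cap G$ preserves both $\u_w^{(A)}$ and $\lgl$, hence their intersection $\u_w$.

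The main obstacle is essentially bookkeeping: one must confirm that the length-$(2l+1)$ sequence $\underline q$ of the proposition defines the same parabolic of $SL_{2d}$ as the length-$(2l+2)$ type A sequence appearing in \cref{choice:QA}. This is forced by the collapsing $q_{2l+1} = q_{2l} = 2d$, itself a consequence of the equalities $c_l = d = c_{l+1}$ in type C, which follow directly from \cref{tiflip}.
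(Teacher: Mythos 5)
Your proposal is correct and follows essentially the same route as the paper: both reduce to type A by writing $Q=Q'\cap G$ and $\u_w=\u_w^{(A)}\cap\lgl$, handle the sequence-length bookkeeping via $c_l=c_{l+1}=d$ (so $q_{2l}=q_{2l+1}=2d$), invoke the type A stabilization (the paper via the flag description \cref{uw:sln}, you via \cref{choice:QA}, whose proof is exactly that description), and get $Q\subset P$ from $q_l=d$. Your explicit remark that $q\in Q'\cap G$ preserves both $\u_w^{(A)}$ and $\lgl$ separately is a slightly more careful phrasing of the paper's "since $Q'$ stabilizes $\u_w$" step.
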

\begin{proof}
It follows from \cref{tiflip} that $c_{l+1}=c_l=d$, hence $q_{2l+1}=q_{2l}=2d$.
Therefore, the standard parabolic subgroup $Q'\subset SL_{2d}$ associated to $(q_0,\cdots,q_{2l+1})$ is the same as the standard parabolic subgroup of $SL_{2d}$ associated to $(q_0,\cdots,q_{2l})$.

Next, it follows from \cref{uw:cd0,uw:sln} that\begin{align}\label{uw:cd}
	\u_w=\set{x\in\lgl}{x\E q_{l+i}.\subset\E q_{i-1}.,\,\forall\,1\leq i\leq l+1}.
\end{align}
Now, since $Q'$ stabilizes $\u_w$, and since $Q=Q'\cap G$, we have $Q\subset Stab_G(\u_w)$.
Finally, since $q_l=d$, we have $Q\subset P$, hence $Q\subset Stab_G(\u_w)\cap P$.
\end{proof}

\section{Defining Equations for the Conormal Variety in Types A and C}
\label{sec:equations}
Fix integers $d<n$. 
Let $G$ be either $SL_n$ or $SO_{2d}$, let $B$ be the subgroup of upper triangular matrices in $G$, and let $P$ be the standard parabolic subgroup of $G$ corresponding to the subset $S\backslash\{\alpha_d\}$ of simple roots.
As discussed in \cref{sec:sln,sec:C}, the variety $X=\nicefrac GP$ is either the usual Grassmannian $Gr(d,n)$ or the symplectic Grassmannian $SGr(2d)$.

We fix a Schubert variety $X_w\subset X$ corresponding to some $w\in W^P$.
In this section, we prove \cref{finalEquations}, which gives a system of defining equations for the conormal variety $T_X^*X_w$ as a subvariety of $T^*X$.
Let $\pi:T^*X\rightarrow X$ be the structure map, and $\mu$ the Springer map.
\Cref{finalEquations} states that a point $p\in T^*X$ is in $T^*_XX_w$ if and only if $\pi(p)\in X_w$ and $\mu(p)$ satisfies \cref{bigshow}.

Recall the commutative diagram from \cref{tau}. 
We show in \Cref{eqrc} that for any point in $Z^Q_w$, its image under $\mu\circ\tau_w$ satisfies \cref{finalEquations}.
Conversely, we show in \cref{rows,cols} that any point in $T^*X$ lying over $X_w$, and further satisfying \cref{bigshow}, belongs to $\tau_w(Z^Q_w)=T_X^*X_w$.

\subsection{Combinatorial Description of $X_w$}\label{sch:cont}\label{num}
Fix $w\in W^P$.
Let the integers $\mw(i,j)$, $r_i$, and $c_i$ be as in \Cref{defn:bi,defn:rc} respectively.
It follows from \Cref{def:Xw,eq:mw,defCD:Xw} that $\F\underline q.\in X^Q_w$ if and only if
\begin{align*}\hspace{30pt}
	\dim(\F q_i.\cap\E t_j.)    &\geq\min\{r_i,r_j\}=r_{\min\{i,j\}}		&\forall\,1\leq i,j\leq l,\\ 
	\dim(\F q_{i+l}.\cap\E t_j.)&\geq r_j+\min\{c_i,c_j\}=r_j+c_{\min\{i,j\}}&\forall\,1\leq i,j\leq l.
\end{align*}
In particular, when $i=j$, this yields $\F q_i.\subset\E t_i.\subset\F q_{i+l}.$.

\subsection{The Cotangent Bundle}
Let $\pi:T^*X\rightarrow X$ be the structure map defining the cotangent bundle, and $\mu:T^*X\rightarrow\Ni$ the Springer map, see \cref{sub:springer}.
We identify the cotangent bundle $T^*X$ with its image under the closed embedding $(\pi,\mu):T^*X\hookrightarrow X\times\Ni$,\begin{align*}
	T^*X=\set{(V,x)\in X\times\mathcal N}{x\E n.\subset V,\,xV=0}.
\end{align*}

\subsection{The Variety $Z^Q_w$}\label{tvzwq}
For $G=SL_n$, let $\underline q$ and $Q$ be as in \cref{choice:QA}.
For $G=Sp_{2d}$, let $\underline q$ and $Q$ be as in \cref{choice:QC}.
Recall the variety $Z^Q_w$ from \cref{defn:zwq}, and the descriptions of $\u_w$ from \cref{uw:sln,uw:cd}.
Using the closed embedding $f$ from \cref{diag2}, we obtain,
\begin{equation}\label{quiver}
\begin{split}
	Z^Q_w&=\set{(\F\underline q.,x)\in X^Q_w\times\Ni}{x\F q_{i+l}.\subset\F q_{i-1}.,\,\forall\,1\leq i\leq l+1}.
\end{split}
\end{equation}

\Cref{finalEquations} states that given $(V,x)\in T^*X$, we have $(V,x)\in T^*_XX_w$, if and only if $V\in X_w$, and $x$ satisfies \cref{bigshow}.
The purpose of the following proposition is to show that \cref{bigshow} is necessary, i.e.,
if $(V,x)\in T^*_XX_w$, then $x$ satisfies \cref{bigshow}. 

\begin{proposition}\label{eqrc}
For any point $(\F\underline q.,x)\in Z^Q_w$, we have, for $1\leq j<i\leq l$,
\begin{equation*} 
\dim(\nicefrac{xE(\up ti)}{E(\up tj)})  \leq
    \begin{cases}\up r{i-1}-\up rj,\\
        \up ci-\up c{j+1}.
    \end{cases}
\end{equation*}
\end{proposition}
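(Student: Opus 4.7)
The plan is to derive both bounds from two ingredients: the quiver condition $xF(q_{k+l}) \subset F(q_{k-1})$ from \cref{quiver}, applied at suitable indices $k$, and the Schubert incidence inequalities recorded in \cref{sch:cont}. The glue between these will be the sandwich $F(q_i) \subset E(t_i) \subset F(q_{i+l})$ (itself a specialization of the Schubert incidences at $i = j$).

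For the first inequality I plan to specialize the quiver condition at $k = i$, which combined with $E(t_i) \subset F(q_{i+l})$ yields $xE(t_i) \subset F(q_{i-1})$. Since the functional $A \mapsto \dim((A + E(t_j))/E(t_j))$ is monotone in $A$, it suffices to bound $\dim(F(q_{i-1})/(F(q_{i-1}) \cap E(t_j)))$, which by the Schubert inequality $\dim(F(q_{i-1}) \cap E(t_j)) \geq r_{\min\{i-1,j\}} = r_j$ is at most $r_{i-1} - r_j$. The minimum collapses to $r_j$ precisely because $j < i$.

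For the second inequality I would instead specialize the quiver condition at $k = j+1$, giving $xF(q_{j+1+l}) \subset F(q_j) \subset E(t_j)$, hence $F(q_{j+1+l}) \subset x^{-1}(E(t_j))$. Rank-nullity applied to the restriction $x|_{E(t_i)}$ packages into the identity
\[
\dim xE(t_i) - \dim(xE(t_i) \cap E(t_j)) = t_i - \dim(E(t_i) \cap x^{-1}E(t_j)),
\]
whose right-hand side is at most $t_i - \dim(E(t_i) \cap F(q_{j+1+l}))$. The Schubert inequality $\dim(E(t_i) \cap F(q_{j+1+l})) \geq r_i + c_{\min\{j+1,i\}} = r_i + c_{j+1}$ then yields the desired bound $t_i - r_i - c_{j+1} = c_i - c_{j+1}$.

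I do not anticipate any real obstacle beyond correctly matching each application of the quiver condition to its arrow ($k = i$ for the $r$-bound and $k = j+1$ for the $c$-bound) and reading off the correct $\min$ from \cref{sch:cont}; the hypothesis $j < i$ is precisely what makes both minima collapse in the intended direction, and the argument is then a uniform routine bookkeeping insensitive to whether we are in type A or C.
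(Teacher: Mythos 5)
Your proposal is correct and follows essentially the same route as the paper: both bounds come from the quiver condition $xF(q_{k+l})\subset F(q_{k-1})$ evaluated at $k=i$ (for the $r$-bound) and $k=j+1$ (for the $c$-bound), together with the Schubert incidences of \cref{sch:cont} and the sandwich $F(q_i)\subset E(t_i)\subset F(q_{i+l})$. The only organizational difference is in the second inequality: the paper bounds $\dim(xF(q_{i+l})/E(t_j))$ by noting $F(q_{j+l+1})$ lies in the kernel of the natural map $F(q_{i+l})\to E(n)/E(t_j)$ and then uses $E(t_i)\subset F(q_{i+l})$, reading the bound off flag dimensions $q_{i+l}-q_{j+l+1}$ alone; you restrict to $E(t_i)$ directly via rank--nullity and then invoke the Schubert incidence $\dim(E(t_i)\cap F(q_{j+1+l}))\geq r_i+c_{j+1}$ to get the kernel bound. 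Both are valid and of comparable length, so this is a cosmetic variation rather than a genuinely different argument.
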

\begin{proof}
Consider $(\F\underline q.,x)\in Z^Q_w$, and integers $1\leq j<i\leq l$.
We see from \cref{sch:cont} that $\E t_i.\subset\F q_{i+l}.$, and from \cref{quiver} that $x\F q_{i+l}.\subset\F q_{i-1}.$. 
Consequently, we have $x\E t_i.\subset\F q_{i-1}.$, and hence,
\begin{align*}
    \dim(\nicefrac{x\E t_i.}{\E t_j.})  &\leq\dim(\nicefrac{\F q_{i-1}.}{\E t_j.})\\
                                        &=\dim\F q_{i-1}.-\dim(\F q_{i-1}.\cap\E t_j.)\\
                                        &\leq r_{i-1}-r_j,
\end{align*}
where the final inequality follows from \cref{num}.
Next, we see from \cref{sch:cont,quiver} that $x\F q_{j+l+1}.\subset\F q_j.\subset\E t_j.$. 
In particular, $\F q_{j+l+1}.$ is contained in the kernel of the map,\begin{align*}
	\F q_{i+l}.\rightarrow\nicefrac{\E n.}{\E t_j.},&&v\mapsto xv(\mod\E t_j.).
\end{align*}
Since the image of this map is precisely $\nicefrac{x\F q_{i+l}.}{\E t_j.}$, we have,\begin{align*}
    \dim(\nicefrac{x\F q_{i+l}.}{\E t_j.})  &\leq\dim\F q_{i+l}.-\dim\F q_{j+l+1}.\\
											&=q_{i+l}-q_{j+l+1}=c_i-c_{j+1}.
\end{align*}
Finally, since $\E t_i.\subset\F q_{i+l}.$, we deduce that $\dim(\nicefrac{x\E t_i.}{\E t_j.})\leq c_i-c_{j+1}$.
\end{proof}
 
The following two propositions lay the groundwork required to prove that \cref{bigshow} is sufficient.
\begin{proposition}\label{rows}
Consider $(V,x)\in X_w\times\Ni$ satisfying $\im x\subset V\subset\ker x$, and\begin{align}\label{work31}\hspace{50pt}
    \dim(\nicefrac{xE\left(t_i\right)}{E\left(t_j\right)})\leq\up r{i-1}-\up rj,&&\qquad 0\leq j<i\leq l+1.
\end{align}
Then, there exists a sequence of subspaces $V_0\subset\cdots\subset V_l=V$, satisfying,\begin{equation}\label{eqfr}
\begin{aligned}
	\dim V_i					&=q_i,								\\
	x\E\up t{i+1}.				&\subset V_i\subset\E\up ti.,\\
	\dim(V_i\cap\E t_j.)		&\geq\min\{r_i,r_j\}=\mw(t_j,q_i),   
\end{aligned}
\end{equation}
for all $1\leq i,j\leq l$.
\end{proposition}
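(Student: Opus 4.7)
The plan is to construct the increasing chain $V_0 \subset V_1 \subset \cdots \subset V_l = V$ by downward induction on $i$, carving $V_{i-1}$ out of $V_i$ using \cref{strong} at each step. To make the induction self-sustaining, I would strengthen the required conclusion to the assertion that each $V_i$ lies in the Schubert variety $X_w^{r_i}$. By \cref{eq:mw} this is equivalent to the dimensional inequalities in \eqref{eqfr}, and it automatically forces $V_i \subset E(t_i)$, since then $\dim(V_i \cap E(t_i)) \geq \mw(t_i, r_i) = r_i = \dim V_i$. The base case is $V_l := V$: by hypothesis $V \in X_w = X_w^{r_l}$ and $xE(t_{l+1}) = \im x \subset V$.

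For the inductive step, assume $V_i \in X_w^{r_i}$ has already been constructed. I would apply \cref{strong} with $U := xE(t_i)$, $k := \dim xE(t_i)$, $d := r_{i-1}$, and outer subspace $V_i$ (so that $m := r_i$). The conclusion is a subspace $V_{i-1} \in X_w^{r_{i-1}}$ of dimension $r_{i-1}$ satisfying $xE(t_i) \subset V_{i-1} \subset V_i$. This delivers the chain condition, the containment $xE(t_i) \subset V_{i-1}$, and, via the strengthened hypothesis at level $i-1$, the containment $V_{i-1} \subset E(t_{i-1})$ along with the remaining dimensional inequalities in \eqref{eqfr}.

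The main obstacle is verifying the hypotheses of \cref{strong}. That $k \leq d \leq m$ is immediate: \eqref{work31} with $j = 0$ gives $\dim xE(t_i) \leq r_{i-1}$, and $r_{i-1} \leq r_i$. The substantive check is the Schubert-like inequality
\[
	\dim\frac{xE(t_i) + E(p)}{E(p)} \leq r_{i-1} - \mw(p, r_{i-1}), \quad 1 \leq p \leq n.
\]
At the critical indices $p = t_j$ with $j < i$, the right-hand side equals $r_{i-1} - r_{\min\{i-1,j\}}$ by \cref{eq:mw}, and the inequality is exactly \eqref{work31}. Extending to intermediate $p$ requires the explicit description of $w$ from \cref{desc:w}: the step function $p \mapsto \mw(p, r_{i-1})$ is constant on the plateaus $(t_j, t'_{j+1}]$ and increases by one on each $(t'_{j+1}, t_{j+1}]$ with $j+1 \leq i-1$. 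A case analysis comparing these plateaus and jumps against the monotone behavior of the left-hand side reduces the general inequality to the bounds already established at the $t_j$'s. This combinatorial reduction is where the judicious choice of $Q$ from \cref{choice:QA} (and \cref{choice:QC} in type C) is essential.
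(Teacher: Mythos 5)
Your proof is correct and essentially reproduces the paper's own argument: downward induction, with \cref{strong} producing $V_{i-1}$ sandwiched between $xE(t_i)$ and (a subspace of) the inductively constructed $V_i$, using \cref{work31} at $j=0$ and general $j$ to supply the dimension bounds on the inner subspace. The only divergence is cosmetic --- the paper pre-intersects to $U_2 = V_i \cap E(t_{i-1})$ and applies \cref{strong} with $d=m=r_{i-1}$, so that $V_{i-1}\subset E(t_{i-1})$ is built in, whereas you take $V_i$ itself as the outer subspace with $m=r_i$ and recover $V_{i-1}\subset E(t_{i-1})$ afterwards from the Schubert inequality $\dim(V_{i-1}\cap E(t_{i-1}))\geq r_{i-1}$; both versions tacitly use the same reduction (via the plateau/slope-one structure of $p\mapsto\mw(p,\cdot)$) that it suffices to verify \cref{strong}'s hypotheses at the breakpoints $E(t_j)$, which you correctly flag and the paper leaves implicit.
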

\begin{proof}
Since $V\in X_w$, it follows from \cref{num} that $V_l=V$ satisfies \Cref{eqfr}.
We construct the subspaces $V_i$ inductively.
In particular, given subspaces $V_i,\cdots,V_l$ satisfying \Cref{eqfr}, we construct $V_{i-1}$. 

Applying \Cref{work31} with $j=i-1$, we have $x\E t_i.\subset \E\up t{i-1}.$.
Further, \Cref{eqfr} yields $x\E t_i.\subset x\E t_{i+1}.\subset V_i$.
Hence, we have,\begin{align*}
    x\E\up ti.\subset  V_i\cap\E t_{i-1}..
\end{align*}

Set $U_1=x\E t_i.$, and $U_2=V_i\cap\E t_{i-1}.$.
Applying \Cref{work31} with $j=0$, we see that $\dim U_1\leq r_{i-1}$.
Let $k=\up r{i-1}-\dim U_1$.

Observe that $U_1\cap\E\up tj.$ is the kernel of the quotient map $U_1\rightarrow\nicefrac{U_1}{\E\up tj.}$. 
Now, since $\dim(\nicefrac{x\E t_i.}{\E t_j.})\leq r_{i-1}-r_j$, we have, for $1\leq j\leq l$,\begin{align*}
    \dim(U_1\cap\E\up tj.)	&=\dim U_1-\dim\left(\nicefrac{U_1}{\E\up tj.}\right)\\
								&\geq(r_{i-1}-k)-(\up r{i-1}-\up rj)=\up rj-k\\
								&\geq\min\{r_{i-1},r_j\}-k\\
								&=\mw(t_j,q_{i-1})-k.
\end{align*}
On the other hand, observe that,\begin{align*}
	U_2\cap\E t_j.	    &=\begin{cases} V_i\cap\E t_{i-1}.   &\text{if }i\leq j,\\
				 			V_i\cap\E t_j.                  &\text{if }i>j,
			    		\end{cases}\\
\implies\dim(U_2\cap\E t_j.)&\geq\begin{cases}r_{i-1}   &\text{if }i\leq j,\\
				    				r_j                 &\text{if }i>j,
								\end{cases}\\
							&=\min\{r_{i-1},r_j\}=\mw(t_j,q_{i-1}).
\end{align*}
 
It now follows from \cref{strong} that there exists a subspace $V_{i-1}$ satisfying $x\E t_i.\subset V_{i-1}\subset U_2\subset \E t_{i-1}.$, and \cref{eqfr}.
\end{proof}

\begin{proposition}\label{cols}
Consider $(V,x)\in X_w\times\Ni$ satisfying $\im x\subset V\subset \ker x$, and\begin{align}\label{bigshow2}\hspace{50pt}
	\dim(\nicefrac{xE(\up ti)}{E(\up tj)})\leq\up ci-\up c{j+1},&&&\forall\,0\leq j< i\leq l+1.
\end{align}
Then, there exists a sequence of subspaces $V= V_l\subset\cdots\subset V_{2l+1}$, satisfying,\begin{equation}\label{eqfc}
\begin{aligned}
	\dim V_{l+i}			&=q_{l+i},			\\
	V_{l+i}					&\subset\ker x+\E t_{i}.,\\
	\dim(V_{l+i}\cap\E t_j.)&\geq r_j+\min\{c_i,c_j\}=\mw(t_j,q_{l+i}).  
\end{aligned}
\end{equation}
for all $1\leq i,j\leq l$.
\end{proposition}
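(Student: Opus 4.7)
The plan is to mirror the inductive strategy of \cref{rows}, but now build the flag \emph{upward}. I would induct on $i$ from $0$ to $l+1$, taking $V_l = V$ as the base case. The base case is immediate: $V \in X_w$ gives $\dim(V \cap \E t_j.) \geq r_j = r_j + \min\{c_0, c_j\}$ (using $c_0 = 0$), the hypothesis $V \subset \ker x = \ker x + \E t_0.$ is given, and $\dim V = d = q_l$.

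For the inductive step, assuming $V_{l+i-1}$ has been constructed, I would invoke \cref{strong} with inner subspace $U_1 = V_{l+i-1}$ (of dimension $d+c_{i-1}$) and outer envelope $U_2 = \ker x + \E t_i.$, searching for $V_{l+i}$ of dimension $d+c_i$ lying in $X_w^{d+c_i}$. The nesting $U_1 \subset U_2$ is immediate from $V_{l+i-1} \subset \ker x + \E t_{i-1}. \subset \ker x + \E t_i.$. I would also carry the strengthened inductive hypothesis $V_{l+i-1} \in X_w^{d+c_{i-1}}$, under which the \cref{strong} condition on $\dim(U_1 \cap \E j.)$ is automatic for all $j$, since $\mw(j,d+c_i) - \mw(j,d+c_{i-1}) \leq c_i - c_{i-1}$.

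The \cref{strong} condition $\dim(U_2 \cap \E j.) \geq \mw(j,d+c_i)$ for all $j$ is the crux. For $j \leq t_i$ it is trivial since $U_2 \supset \E t_i. \supset \E j.$. For $j > t_i$, the inclusion $x\E t_i. \subset \E t_{i-1}. \subset \E t_i.$ (which is the $j'=i-1$ case of the hypothesis) together with the modular law yields
\[
	(\ker x + \E t_i.) \cap \E j. = (\ker x \cap \E j.) + \E t_i.,
\]
of dimension $j - \dim(x\E j./x\E t_i.)$. At $j = t_k$ with $k > i$, this reduces the requirement to the key combinatorial bound $\dim(x\E t_k./x\E t_i.) \leq c_k - c_i$.

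The main obstacle will be establishing this key bound. The hypothesis $\dim(x\E t_k./\E t_i.) \leq c_k - c_{i+1}$ directly constrains $\dim x\E t_k. - \dim(x\E t_k. \cap \E t_i.)$, while the Schubert condition $V \in X_w$ combined with $V \subset \ker x$ supplies $\dim x\E t_j. \leq c_j$ via $\dim(\ker x \cap \E t_j.) \geq \dim(V \cap \E t_j.) \geq r_j$. I expect that combining these ingredients via rank-nullity for $x\colon \E t_k./\E t_i. \to \E n./x\E t_i.$ (whose kernel is $((\ker x \cap \E t_k.) + \E t_i.)/\E t_i.$ by the modular law) will yield the desired inequality. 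Once established, \cref{strong} produces $V_{l+i}$ with all required intersection bounds, preserving the strengthened induction; after $l+1$ steps, $V_{2l+1} = \E n.$ completes the chain.
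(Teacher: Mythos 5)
Your proposal mirrors the paper's proof in its global architecture: you take $V_l=V$ as the base case, build the flag upward by an induction on $i$, and in the inductive step you invoke \cref{strong} with inner subspace $V_{l+i-1}$ and outer envelope $\ker x+\E t_i.$; this is exactly the space $U=\ker x+\E t_i.$ that the paper works with. Your observation that $\dim(U\cap\E j.)\geq\mw(j,q_{l+i})$ is trivial for $j\leq t_i$ also matches the paper.

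The genuine gap is precisely where you flag it: the ``key combinatorial bound'' $\dim(x\E t_k./x\E t_i.)\leq c_k-c_i$, which you correctly recognize as equivalent to the strong-condition on $\ker x+\E t_i.$, is left as an expectation rather than a proof, and the route you sketch does not close it. The two ingredients you list — $\dim x\E t_k.\leq c_k$ (from $V\subset\ker x$ and $V\in X_w$) and $\dim(x\E t_k./\E t_i.)\leq c_k-c_{i+1}$ — cannot be combined by the rank-nullity count you describe. Rank-nullity for $x\colon\E t_k./\E t_i.\to\E n./x\E t_i.$ rewrites $\dim(x\E t_k./x\E t_i.)$ as $(t_k-t_i)-\bigl(\dim(\ker x\cap\E t_k.)-\dim(\ker x\cap\E t_i.)\bigr)$, so the bound reduces to $\dim(\ker x\cap\E t_k.)-\dim(\ker x\cap\E t_i.)\geq r_k-r_i$; but $V\in X_w$ supplies only the two separate lower bounds $\dim(\ker x\cap\E t_k.)\geq r_k$ and $\dim(\ker x\cap\E t_i.)\geq r_i$, and $\dim(\ker x\cap\E t_i.)$ is not bounded above by $r_i$, so the difference estimate does not follow. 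Likewise, the hypothesis you cite at index pair $(k,i)$ controls $\dim x\E t_k.-\dim(x\E t_k.\cap\E t_i.)$, which is at most $\dim x\E t_k.-\dim x\E t_i.$ (since $x\E t_i.\subset x\E t_k.\cap\E t_i.$), again the wrong direction. The paper instead passes through the map $\E t_j.\to\nicefrac{x\E t_j.}{\E t_{i-1}.}$ and invokes \cref{bigshow2} at the shifted index pair $(j,i-1)$, obtaining $\dim(\nicefrac{x\E t_j.}{\E t_{i-1}.})\leq c_j-c_i$, and identifying $U\cap\E t_j.$ with the kernel of that map via the equality $\ker x+\E t_i.=x^{-1}(x\E t_i.)$. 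So: same strategy and same bottleneck, but the decisive inequality is left unproved, and the specific chain of estimates you propose does not supply it.
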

\begin{proof}
Since $V\in X_w$, it follows from \cref{num} that $ V_l=V$ satisfies \Cref{eqfc}.
We construct the subspaces $ V_{l+i}$ inductively.
In particular, given subspaces $V_l,\cdots, V_{l+i-1}$ satisfying \cref{eqfc}, we construct $ V_{l+i}$. 

We see from \cref{eqfc} that $V_{l+i-1}\subset \ker x+\E\up t{i}.$. 
Set $
	U=\ker x+\E\up t{i}.
$. 
We first prove that,\begin{align}\label{work10}\hspace{20pt}
	\dim(U\cap\E t_j.)\geq r_j+\min\{c_{i},c_j\}=\mw(t_j,q_{l+i})	&&\forall\,1\leq j\leq l+1.
\end{align}
For $j\leq i$, we have $\E t_j.\subset U$, hence $U\cap\E t_j.=\E t_j.$. 
It follows that,\begin{align*}
	\dim(U\cap\E t_j.)=t_j&=r_j+c_j=r_j+\min\{c_i,c_j\}.
\end{align*}
For $j>i$, consider the map,\begin{align*}
	\E t_j.\rightarrow\nicefrac{x\E t_j.}{\E t_{i-1}.},&& v\mapsto xv(\mod\E t_{i-1}.).
\end{align*}
Since $x\E t_i.\subset\E t_{i-1}.$, the subspace $U\cap\E t_j.$ is contained in the kernel of this map. 
Further, \cref{bigshow2} states that $\dim(\nicefrac{x\E\up tj.}{\E\up ti.})\leq\up cj-\up c{i+1}$, hence\begin{align*}
	\dim(U\cap\E\up tj.)	&\geq\up tj-(\up cj-\up ci)\\
							&=\up rj+\up c{i}=r_j+\min\{c_{i},c_j\}.
\end{align*}
This finishes the proof of \Cref{work10}.
It now follows from \cref{work10,weak1,strong} that there exists a subspace $V_{l+i}$ containing $V_{l+i-1}$, and further satisfying \cref{eqfc}.
\end{proof}

\begin{mainTheorem}\label{finalEquations}
Consider $(V,x)\in T^*X$.
Then $(V,x)\in T_X^*X_w$ if and only if $V\in X_w$, and further, for all $1\leq j<i\leq l+1$, we have,
\begin{align}\label{bigshow}
    \dim(\nicefrac{x\E\up ti.}{\E t_j.})\leq\begin{cases}\up r{i-1}-\up rj,\\ \up ci-\up c{j+1}.\end{cases}
\end{align}
\end{mainTheorem}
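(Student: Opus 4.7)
The plan is to prove the two implications separately, using the variety $Z_w^Q$ and the map $\tau_w$ from \cref{tau} as the bridge between $(V, x)$ and explicit flag data.

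For necessity, \cref{bir0} asserts that the composite $\tau_w \circ \theta_\w^Q : \reso \to \con$ is a proper birational map, hence surjective onto $\con$. Given $(V, x) \in \con$, we may therefore lift it to a point $(\F\underline q., x) \in Z_w^Q$ with $\F q_l. = V$. Since the projection of $X_w^Q$ to $X_w$ sends $\F\underline q. \mapsto \F q_l.$, we immediately obtain $V \in X_w$. The required dimension inequalities for $1 \leq j < i \leq l$ then follow from \cref{eqrc} applied to this preimage, while the boundary case $i = l+1$ is an easy extension of the same argument, exploiting $x \F q_{i+l}. \subset \F q_{i-1}.$ at $i = l+1$ (which amounts to $\im x \subset V$) together with $V \in X_w$.

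For sufficiency, given $(V, x) \in T^*X$ with $V \in X_w$ and the stated inequalities, the strategy is to produce a flag $\F\underline q. \in X_w^Q$ of shape $\underline q$ with $\F q_l. = V$ satisfying $x\F q_{i+l}. \subset \F q_{i-1}.$ for all $1 \leq i \leq l+1$. By the description of $Z_w^Q$ in \cref{quiver}, this yields $(\F\underline q., x) \in Z_w^Q$, and applying $\tau_w$ gives $(V, x) = \tau_w(\F\underline q., x) \in \con$. The flag is built in two halves: \cref{rows} supplies the first half $V_0 \subset \cdots \subset V_l = V$ satisfying $x\E t_{i+1}. \subset V_i \subset \E t_i.$ and the Schubert dimension bounds, while \cref{cols} supplies the second half $V_l \subset \cdots \subset V_{2l+1}$ satisfying $V_{l+i} \subset \ker x + \E t_i.$ and the Schubert bounds. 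The remaining incidence conditions follow from the chain $x V_{l+i} \subset x(\ker x + \E t_i.) = x\E t_i. \subset V_{i-1}$, where the final containment is \cref{rows} at index $i - 1$.

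The main obstacle is the inductive construction inside \cref{rows,cols}: one must extract, at each step, a subspace of prescribed dimension that simultaneously respects the Schubert conditions for $X_w^Q$ and the image/kernel constraints imposed by $x$. The key technical engine is \cref{strong}, which produces an interpolating subspace with the correct dimension behaviour once compatible bounds are known at the two ends. In the type C setting, one additionally needs the constructed flag to be isotropic; this is ensured by the anti-diagonal symmetry of the permutation matrix of $w$ recorded in \cref{tiflip}, which makes the halves built by \cref{rows} and \cref{cols} compatible under $\perp$ and hence allows them to assemble into an isotropic flag.
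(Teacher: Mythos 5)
Your plan for type A is essentially the paper's own proof: surjectivity of $\tau_w$ (from \cref{bir0}) together with \cref{eqrc} for necessity, and assembly of a flag $\F\underline q.\in X_w^Q$ from \cref{rows,cols} for sufficiency, with the quiver condition obtained via $x V_{l+i}\subset x(\ker x+\E t_i.)=x\E t_i.\subset V_{i-1}$.

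The type C portion, however, does not work as you describe, and this is where your proposal has a real gap. You claim that \cref{tiflip} makes the two halves $V_0\subset\cdots\subset V_l$ and $V_l\subset\cdots\subset V_{2l+1}$ \emph{compatible under} $\perp$, so that they assemble into an isotropic flag. But \cref{tiflip} is a purely numerical identity: it gives $\dim V_{2l-i}=2d-\dim V_i=\dim V_i^\perp$, and nothing more. The subspaces $V_{l+1},\ldots,V_{2l+1}$ produced by \cref{cols} are built from $\ker x+\E t_i.$ using \cref{strong}, with no reference to the form $\ws$, so there is no reason for $V_{2l-i}$ to equal $V_i^\perp$; generically it will not, and then the stacked flag fails to be isotropic and does not lie in $X^Q$ at all. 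The paper handles type C by a genuinely different device: it discards the \cref{cols} output entirely and sets $\F q_i.=V_i$ for $i\le l$, $\F q_i.=V_{2l-i}^\perp$ for $l<i\le 2l$, so that isotropy holds by construction. This forces two further steps that your proposal does not supply: (a) one must check $\F\underline q.\in X_w^Q$ from the conditions on the first half alone, which is exactly what \cref{checkHalf} is for; and (b) the quiver condition $x\F q_{l+i}.\subset\F q_{i-1}.$ can no longer be read off from \cref{eqfc} (since $\F q_{l+i}.=V_{l-i}^\perp$, not $V_{l+i}$) — the paper instead uses the $\ws$-adjointness of $x\in\lgl$ from \cref{lglC}: from $x\E t_{i+1}.\subset V_i$ one gets $\ws(\E t_{i+1}.,x(V_i^\perp))=0$, hence $x(V_i^\perp)\subset\E t_{i+1}.^\perp$, with \cref{tiflip} identifying this perp as $\E t_{l-i-1}.$. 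Without (a) and (b) the sufficiency direction in type C is unproven.
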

\begin{proof}
Recall the map $\tau_w:Z^Q_w\rightarrow T_X^*X_w$ from \cref{tau}, given by,$$\tau_w(\F\underline q.,x)=(\F d.,x).$$
The map $\tau_w$ is proper and birational (see \cref{bir0}), hence surjective.
It follows that $(V,x)\in T_X^*X_w$ if and only if there exists $\F\underline q.\in X_w^Q$ such that $\F d.=V$, and $(\F\underline q.,x)\in Z^Q_w$.

Consider $(V,x)\in T_X^*X_w$.
It follows from \cref{bir0} that $V\in X_w$, and from \cref{eqrc} that \cref{bigshow} holds.
Conversely, consider $(V,x)\in T^*X$ satisfying $V\in X_w$, and \cref{bigshow}.
We will construct $\F\underline q.\in X_w^Q$ such that $(\F\underline q.,x)\in Z^Q_w$, and $\tau_w(\F\underline q.,x)=(V,x)$.

Using \cref{rows}, we construct subspaces $V_0,\cdots,V_l=V$ satisfying \cref{eqfr}. 
Similarly, we use \cref{cols} to construct subspaces $V_{l+1},\cdots,V_{2l+1}$ satisfying \cref{eqfc}.

Suppose first that $G=SL_n$.
We set,\[
	\F\underline q.=V_0\subset V_1\subset\cdots\subset V_{2l+1}.
\]
Observe that $\F d.=V_l=V$.
It follows from \cref{eqfr,eqfc} that $\F\underline q.\in X_w^Q$. 
Further, for $1\leq i\leq l+1$, we have,
\begin{align*}
	\F\up q{l+i}.\subset\ker x+\E\up t{i}.
\implies	x\F\up q{l+i}.	&\subset x\E\up t{i}.\subset\F\up q{i-1}.. 
\end{align*}
This is precisely the condition for $(\F\underline q.,x)$ to belong to $Z^Q_w$.

Suppose next that $G=Sp_{2d}$.
Let $\F\underline q.$ be the partial flag given by,\[
	\F q_i.=\begin{cases}V_i	&\text{for }i\leq l,\\
				V_{2l-i}^\perp	&\text{for }l<i\leq 2l.
		\end{cases}
\]
In particular, we have $\F d.=V_l=V$.
It follows from \cref{checkHalf,rows} that $\F\underline q.\in X^Q_w$.
It remains to show that $(\F\underline q.,x)\in Z^Q_w$.

For $0\leq i\leq l$, we have $x\E t_{i+1}.\subset V_i$, hence $\omega(x\E t_{i+1}.,V_i^\perp)=0$.
It follows from \cref{lglC} that
$\omega(\E t_{i+1}.,x(V_i^\perp))=0$, hence,
\begin{align*}
	&x\F q_{2l-i}.=x(V_i^\perp)\subset\E t_{i+1}.^\perp=\E t_{l-i-1}..
\end{align*}
The final equality is a consequence of \cref{tiflip}.
Substituting $i\mapsto l-i$ yields $x\E q_{l+i}.\subset\E t_{i-1}.$ for all $0\leq i\leq l$.
It follows that $(\F\underline q.,x)\in Z^Q_w$, hence $(\F d.,x)\in T^*_XX_w$.
\end{proof}

\section{Orbital Varieties}

Let $G$, $B$ be as in the previous sections.
Consider a $G$-orbit closure $\Ni[\lambda]\subset\mathcal N$.
The irreducible components of $\Ni[\lambda]\cap\u_B$ are called orbital varieties.
Orbital varieties are closely related to the conormal varieties of Schubert varieties. 
\begin{proposition}[cf. \cite{MR672610}]\label{orbitalConormal}
Given a standard parabolic subgroup $Q$, and a Schubert variety $X_w^Q\subset X^Q$, the image of the conormal variety $T^*_{X^Q}X^Q_w$ under the Springer map $\mu^Q:T^*X^Q\rightarrow\Ni$ is an orbital variety.
\end{proposition}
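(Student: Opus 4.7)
\emph{Proof plan.} The plan is to verify that $Y := \mu^Q(T^*_{X^Q}X^Q_w)$ is a $B$-stable, irreducible, closed subvariety of $\u_B\cap\mathcal N$, and then to check it has the correct dimension to be an irreducible component of $\overline{\Ni[\lambda]^\circ}\cap\u_B$ for the unique appropriate $\lambda$. The first three properties are formal: $T^*_{X^Q}X^Q_w$ is irreducible (closure of a vector bundle over the connected Schubert cell $C^Q_w$) and $B$-stable (since $C^Q_w$ is a single $B$-orbit), while the Springer map $\mu^Q$ is proper (see \cref{sub:springer}) and $B$-equivariant (the action $b\cdot[g,x]=[bg,x]$ on $T^*X^Q=G\times^Q\u_Q$ intertwines with the adjoint action on $\mathcal N$). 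Hence $Y$ is automatically closed, irreducible, and $B$-stable.

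To prove $Y\subset \u_B$ I would evaluate $\mu^Q$ on the dense open $T^*_{X^Q}C^Q_w$. The parabolic analog of \cref{lem:zow}, proved by exactly the same argument, identifies this conormal bundle with $BwQ\times^Q(\u_Q\cap Ad(w^{-1})\u_B)$, the fiber being the subspace $\u_w$ of \cref{defn:uw} (with $Q$ in place of $P$). For a representative $(bw,x)$ with $x\in\u_Q\cap Ad(w^{-1})\u_B$, we compute
\[
\mu^Q(bw,x)=Ad(bw)\,x=Ad(b)\bigl(Ad(w)\,x\bigr)\in Ad(b)\,\u_B=\u_B,
\]
since $Ad(w)\,x\in Ad(w)\u_Q\cap\u_B$ and $\u_B$ is $B$-stable. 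Taking closures gives $Y\subset\u_B\cap\mathcal N$, and the irreducibility of $Y$ then forces $Y\subset\overline{\Ni[\lambda]^\circ}\cap\u_B$ for the unique orbit $\Ni[\lambda]^\circ$ meeting $Y$ in a dense open subset.

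The main obstacle is the dimension identity $\dim Y=\tfrac12\dim\Ni[\lambda]^\circ$; combined with the Spaltenstein-Joseph theorem that every orbital variety has exactly this dimension, it upgrades $Y$ to an irreducible component of $\overline{\Ni[\lambda]^\circ}\cap\u_B$. The natural way to extract it is via the Lagrangian-moment-map framework recalled in the remark after the introduction: $T^*_{X^Q}X^Q_w$ is Lagrangian in $T^*X^Q$ (as is every conormal variety), and $\mu^Q$ is the moment map for the $G$-action, so general principles force $Y\cap\Ni[\lambda]^\circ$ to be isotropic for the Kirillov-Kostant-Souriau form on $\Ni[\lambda]^\circ$. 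The matching lower bound comes from $\dim T^*_{X^Q}X^Q_w=\dim X^Q$ together with the Spaltenstein-Steinberg estimate $\dim(\mu^Q)^{-1}(y)\leq\dim X^Q-\tfrac12\dim(G\cdot y)$ for generic $y$ in the image, applied to the surjection $T^*_{X^Q}X^Q_w\twoheadrightarrow Y$. Equivalently, in the spirit of the Steinberg correspondence of \cite{MR672610}, one may pull back along the smooth quotient $X^{\mathcal B}\to X^Q$ to reduce the statement to the Borel case $Q=\mathcal B$, where it is classical.
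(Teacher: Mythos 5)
The paper offers no proof of \cref{orbitalConormal}: the result is quoted from the literature (hence the ``cf.~\cite{MR672610}''), so there is no in-text argument to compare yours against, and your proposal has to be judged on its own. Your outline is the standard proof and is essentially sound. The formal properties (closed, irreducible, $B$-stable) and the containment $Y\subset\u_B$ are correct as stated: the conormal fibre over $wQ$ is indeed $\u_Q\cap Ad(w^{-1})\u_B$, by the same computation as in \cref{lem:zow,defn:uw}, and $Ad(bw)x\in Ad(b)\u_B=\u_B$ follows. The dimension count also goes through: writing $\mathcal O$ for the unique $G$-orbit meeting $Y$ densely, the fibre estimate $\dim(\mu^Q)^{-1}(y)\le\dim X^Q-\tfrac12\dim\mathcal O$ applied to the generic fibre of $T^*_{X^Q}X^Q_w\twoheadrightarrow Y$ gives $\dim Y\ge\tfrac12\dim\mathcal O$, while Spaltenstein's purity theorem ($\mathcal O\cap\u_B$ has pure dimension $\tfrac12\dim\mathcal O$) gives the reverse inequality. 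Two points to tighten. First, the ``image of a Lagrangian is isotropic'' principle is usually formulated for $G$-stable Lagrangians, whereas $T^*_{X^Q}X^Q_w$ is only $B$-stable; you do not actually need it, since the upper bound is already supplied by the purity theorem you invoke for the matching step. Second, your target should be that $Y$ is a component of $\overline{\mathcal O\cap\u_B}$ (the paper's notion of orbital variety), not of $\overline{\mathcal O}\cap\u_B$, which can have additional components contained in smaller orbits; the density of $Y\cap\mathcal O$ in $Y$, which you do establish, is what bridges the two. Your closing suggestion to reduce to $Q=\mathcal B$ via a maximal representative is also legitimate and is in fact the mechanism the paper itself uses later, in the two-column-tableau proposition of the orbital-varieties section.
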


For more details on the relationship between conormal varieties and orbital varieties, the reader may consult \cite{MR2510045,MR672610}.
Providing a combinatorial description of the inclusion order on orbital varieties, and providing the defining equations for an orbital variety viewed as a subvariety of $\u_B$, are both open problems in general.
For certain orbital varieties in types A, B, C (those corresponding to the nilpotent orbits satisfying $x^2=0$), these problems were solved in \cite{MR2134184,MR3717217}.

Suppose $G$ is either $SL_{2n}$ or $Sp_{2d}$, and $P$ is the standard parabolic group corresponding to $S\backslash\{\alpha_d\}$.
We derive, in \cref{mainOrbital}, 
a system of defining equations for orbital varieties of the form $\mu(T^*_{X}X_w)$.
This is an easy consequence of \cref{finalEquations}, and recovers some of the results of \cite{MR2134184,MR3717217}.

\begin{mainTheorem}\label{mainOrbital}
Let $G$, $B$, $P$, $X$, $w$, and $\mu$ be as in \cref{finalEquations}.
Then,\[
	\mu(T^*_XX_w)=\set{x\in\u_B}{x^2=0,\,\dim(\nicefrac{x\E t_i.}{\E t_j.})\leq\begin{cases}\up r{i-1}-\up rj,\\\up ci-\up c{j+1},\end{cases}\forall\,1\leq i<j\leq l}.
\]
\end{mainTheorem}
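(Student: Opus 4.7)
The theorem follows from \cref{finalEquations} by analyzing the image under $\mu$, which in the identification $T^*X = \{(V,x) : \im x \subset V \subset \ker x\}$ is simply the projection $\mu(V,x) = x$. We verify the two inclusions separately.

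For $\mu(T^*_X X_w) \subset \{x \in \u_B : x^2 = 0,\, \text{inequalities}\}$: if $(V,x) \in T^*_X X_w$, then $x^2 = 0$ follows from $\im x \subset V \subset \ker x$, and the dimension inequalities follow directly from \cref{finalEquations}. For the containment $x \in \u_B$, we use $T^*_X X_w = \overline{T^*_X C_w}$: by \cref{lem:zow}, every point of $T^*_X C_w$ has the form $(bw\E d.,\, \mathrm{Ad}(bw)y)$ with $y \in \u_w$, and since $\u_w \subset \mathrm{Ad}(w^{-1})\u_B$ we have $\mathrm{Ad}(bw)y \in \u_B$. Closedness of $\u_B$ in $\Ni$ together with properness of $\mu$ yields $\mu(T^*_X X_w) = \overline{\mu(T^*_X C_w)} \subset \u_B$.

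For the reverse inclusion, suppose $x \in \u_B$ satisfies $x^2 = 0$ and the dimension inequalities. By \cref{finalEquations}, it suffices to produce $V \in X_w$ with $\im x \subset V \subset \ker x$ (Lagrangian in the symplectic case), since then $(V,x) \in T^*_X X_w$ and $x = \mu(V,x) \in \mu(T^*_X X_w)$. The plan is to apply \cref{strong} with $U := \im x$ and ambient $V_{\mathrm{top}} := \ker x$, targeting dimension $d$. The required intersection bounds $\dim(\im x \cap \E t_j.) \geq r_j - (d - \dim \im x)$ and $\dim(\ker x \cap \E t_j.) \geq r_j$ follow from the hypotheses: the first by taking $i = l+1$ in the $r_{i-1} - r_j$ family, and the second by combining the $c_i - c_{j+1}$ family with the identity $\dim(\ker x \cap \E t_j.) = t_j - \dim(x\E t_j.)$. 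In the symplectic setting, the fact that $x \in \mathfrak{sp}$ gives $(\im x)^\perp = \ker x$, so any Lagrangian containing $\im x$ automatically lies in $\ker x$; \cref{checkHalf} together with the antidiagonal symmetry of \cref{tiflip} then lets us upgrade the output of \cref{strong} to a Lagrangian subspace lying in $X_w$.

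The main obstacle is bridging the inequalities, stated at the indices $\E t_j.$, to the intersection bounds required by \cref{strong} at every intermediate index $i$ (since $\mw(i,d)$ varies within the blocks $(t_{j-1},t_j]$). Here the condition $x \in \u_B$ plays a crucial role: it forces $x\E i. \subset \E i-1.$ for all $i$, providing enough finer control on $\im x$ relative to the standard flag $(\E i.)_i$ to verify the hypotheses of \cref{strong} uniformly. Once $V$ is produced, \cref{finalEquations} certifies $(V,x) \in T^*_X X_w$, and the conclusion $x \in \mu(T^*_X X_w)$ is immediate.
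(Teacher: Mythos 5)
Your overall route is the same as the paper's. The forward inclusion is the routine combination of \cref{finalEquations} with the observations that $\im x\subset V\subset\ker x$ forces $x^2=0$ and that $\mu(T^*_XC_w)\subset\u_B$ (hence so is the image of the closure, by properness of $\mu$). The reverse inclusion is obtained, exactly as in the paper, by feeding $U=\im x$ and $\ker x$ into \cref{strong} to manufacture $V\in X_w$ with $\im x\subset V\subset\ker x$: the $j=0$ instances of the $c$-inequalities give $\dim(\ker x\cap\E t_i.)\geq r_i$, and the top instances of the $r$-inequalities give $\dim(\im x\cap\E t_j.)\geq r_j-(d-\dim\im x)$.

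Two points need attention. First, the step you single out as ``the main obstacle'' --- passing from bounds at the indices $t_j$ to the hypotheses of \cref{strong} at every intermediate index --- is real, but your proposed resolution is off: it has nothing to do with $x\in\u_B$. For \emph{any} subspace $V$, $\dim(V\cap\E i.)$ drops by at most one when $i$ drops by one, while $\mw(i,d)$ is constant on each block $(t_{j-1},t'_j]$ and increases by exactly one per step on $(t'_j,t_j]$; hence the inequalities at the indices $t_j$ already imply them at every $i$. This is precisely the content of \cref{sch:cont} (cf.\ the argument of \cref{weak1}), which is what the paper cites at this point; the condition $x\in\u_B$ is needed elsewhere (for the forward inclusion, and in type C to make $\im x$ isotropic), not here. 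Second, in the symplectic case \cref{strong} only produces a $d$-plane in the type A Schubert variety, not a Lagrangian. Your reduction via $(\im x)^\perp=\ker x$ is correct --- and it is a point the paper's own proof silently elides --- but ``upgrading the output of \cref{strong} to a Lagrangian'' is exactly the missing lemma (an isotropic analogue of \cref{strong}); \cref{checkHalf} and \cref{tiflip} do not supply it by themselves. Neither issue is fatal, but both gaps need to be filled in explicitly.
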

\begin{proof}
	Consider $x\in\u_B$ satisfying $x^2=0$, and\begin{align}\label{work40}\hspace{30pt}
	\dim(\nicefrac{x\E t_i.}{\E t_j.})\leq\begin{cases}\up r{i-1}-\up rj,\\
											\up ci-\up c{j+1},
										\end{cases}&&\forall\,1\leq j<i\leq l.
\end{align}
Substituting $j=0$ in \cref{work40}, we obtain,\begin{align*}
		\dim(\im(x|\E t_i.)		&=\dim (x\E t_i.)\\&\leq c_i-c_1\\
\implies\dim(\ker x\cap\E t_i.)	&=\dim(\ker(x|\E t_i.))	\\
								&\geq t_i-(c_i-c_1)\\
								&=r_i+c_1\geq r_i.
\end{align*}
Let $k=d-\dim(\im x)$.
Substituting $i=l$ in \cref{work40} yields, 
\begin{align*}
	\dim(\nicefrac{\im x}{\E t_j.})	&\leq r_{l-1}-r_j\leq d-r_j\\
	\implies\dim(\im x+\E t_j.)		&\leq(d-r_j)+t_j\\
	\implies\dim(\im x\cap\E t_j.)	&=\dim(\im x)+\dim\E t_j.-\dim(\im x+\E t_j.)\\
									&\geq(d-k)+t_j-(d-r_j+t_j)=r_j-k.
\end{align*}
Observe that since $x^2=0$, we have $\im x\subset\ker x$.
It now follows from \cref{strong,sch:cont} that there exists $V\in X_w$ such that,\[
	\im x\subset V\subset\ker x,
\]
i.e., $(V,x)\in T^*_XX_w$. 
Consequently, we have $x\in\mu(T^*_XX_w)$. 
\end{proof}

\subsection{Orbital Varieties in Type A}\label{Aorbital}
When $G=SL_n$, the $G$-orbits in $\Ni$ are indexed by the partitions of $n$. 
For $\boldsymbol\lambda$ a partition of $n$, the irreducible components of $\Ni[\lambda]\cap\u_B$ are indexed by the standard Young tableaux of shape $\boldsymbol\lambda$.
For $\mathsf T$ a standard Young tableau, we denote the corresponding orbital variety by $\mathcal O_\mathsf T$.

In this case, the relationship between conormal varieties of Schubert varieties, and orbital varieties, as described in \cref{orbitalConormal}, provides a geometric realization of the Robinson-Schensted correspondence.

\begin{proposition}[cf. \cite{MR929778}]
Suppose $\mathsf T$ is the left Robinson-Schensted tableau of some $w\in S_n$.
Then $\mathcal O_\mathsf T=\mu^B(T_{X^B}^*X^B_w)$.
\end{proposition}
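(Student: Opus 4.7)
\textbf{Proof plan for \cref{mainOrbital}.}
I prove the two inclusions separately. The inclusion $\mu(T^*_X X_w) \subseteq \{x \in \u_B : x^2 = 0, \text{the stated inequalities}\}$ is essentially a bookkeeping exercise. That $\mu(T^*_X X_w) \subseteq \u_B$ follows from \cref{lem:zow}: on the dense open subvariety $T^*_X C_w$ every point has the form $(bwP, \mathrm{Ad}(bw)y)$ with $y \in \u_w = \u \cap \mathrm{Ad}(w^{-1})\u_B$, and then $\mathrm{Ad}(bw)y \in \mathrm{Ad}(b)(\mathrm{Ad}(w)\u \cap \u_B) \subseteq \u_B$ because $\u_B$ is $B$-stable; closures preserve this containment. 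The identity $x^2 = 0$ is forced by $\im x \subseteq V \subseteq \ker x$, and the dimensional inequalities are exactly the content of \cref{finalEquations}.

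For the reverse inclusion, given $x \in \u_B$ satisfying $x^2 = 0$ together with the stated inequalities, the goal is to produce a subspace $V \in X_w$ with $\im x \subseteq V \subseteq \ker x$; the sufficiency direction of \cref{finalEquations} then yields $(V,x) \in T^*_X X_w$ and hence $x = \mu(V,x) \in \mu(T^*_X X_w)$. The subspace $V$ will come from \cref{strong} applied with $U = \im x$ and ambient container $\ker x$, the containment $\im x \subseteq \ker x$ being guaranteed by $x^2 = 0$. To verify the hypotheses of \cref{strong}, I would specialise the assumed inequalities at the two boundary values of the quantifier: taking $j = 0$ in $\dim(x\E t_i./\E t_j.) \leq c_i - c_{j+1}$ gives $\dim(\ker x \cap \E t_i.) \geq t_i - (c_i - c_1) = r_i + c_1$, and taking $i = l+1$ in $\dim(x\E t_i./\E t_j.) \leq r_{i-1} - r_j$ (since $x \E t_{l+1}. = \im x$) gives $\dim(\im x \cap \E t_j.) \geq r_j - k$, where $k = d - \dim \im x$. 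By the identities of \cref{eq:mw}, these match precisely the lower bounds demanded by \cref{strong} at the flag steps $\E t_j.$, which by \cref{sch:cont} are enough to force $V \in X_w$.

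In the symplectic case an additional step is needed to ensure $V = V^\perp$. This is arranged precisely as in the proof of \cref{finalEquations}: one builds $V_0 \subset \cdots \subset V_l = V$ via the rows construction and extends by $V_{l+i} = V_{l-i}^\perp$, so that self-duality of the resulting flag is automatic, and \cref{tiflip} together with the invariance relation $\omega(xu,v) + \omega(u,xv) = 0$ supplies the flag-stability condition $x\F q_{l+i}. \subseteq \F q_{i-1}.$ needed to land in $Z^Q_w$. The principal obstacle is keeping the endpoint arithmetic clean: verifying that the conventions $\E t_0. = 0$ and $\E t_{l+1}. = \E n.$ produce the intended boundary terms, and checking that the elementary identities among $r_i$, $c_i$, $t_i$ cause the derived intersection bounds to line up exactly with the hypotheses of \cref{strong}. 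Once this matching is in place, both inclusions follow directly.
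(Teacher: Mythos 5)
You have proved the wrong statement. The proposition you were asked to prove is the one attributed to Steinberg \cite{MR929778}: if $\mathsf T$ is the \emph{left Robinson-Schensted tableau} of $w\in S_n$, then $\mathcal O_\mathsf T=\mu^B(T^*_{X^B}X^B_w)$. This is a statement in the full flag variety setting ($X^B=\nicefrac GB$, $\mu^B:T^*X^B\to\Ni$), and its content is the identification of the combinatorial RSK indexing of orbital varieties with the geometric indexing coming from the Steinberg variety. Your proposal instead proves \cref{mainOrbital}, which is a different result in a different setting: a system of rank inequalities cutting out $\mu(T^*_XX_w)$ inside $\u_B$ for the Grassmannian $X=\nicefrac GP$. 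Nothing in your argument touches the Robinson-Schensted correspondence, the tableau $\mathsf T$, or the map $w\mapsto\mathsf T$; nor does the dimensional bookkeeping via \cref{strong}, \cref{rows}, \cref{cols} have any bearing on the bijection $\mathsf T\leftrightarrow\mathcal O_\mathsf T$.

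For context, the paper does not prove the stated proposition at all; it is cited. A genuine proof would go through Steinberg's construction: one shows that the map sending $w$ to the pair of Young tableaux associated to the generic Jordan-block structure of elements of $\u_B\cap\mathrm{Ad}(w^{-1})\u_B$ and of $\u_B\cap\mathrm{Ad}(w)\u_B$ coincides with the RSK insertion algorithm, then identifies $\mu^B(T^*_{X^B}X^B_w)$ with the closure of the generic stratum. Your machinery from \cref{finalEquations} and \cref{strong} is designed for a completely different purpose and cannot be repurposed here without reinventing the RSK/Steinberg comparison from scratch. You should restart from the intended statement rather than from \cref{mainOrbital}.
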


\begin{proposition}
Let $\mathsf T$ be a standard Young tableau with exactly two columns. 
Then there exists a standard parabolic subgroup $P\subset G$, and a Schubert variety $X_w$ in $X=\nicefrac GP$, such that $\mathcal O_{\mathsf T}=\mu(T^*_XX_w)$.
\end{proposition}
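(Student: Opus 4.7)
The plan is to exhibit, for any 2-column SYT $\mathsf T$, a standard maximal parabolic $P \subset G = SL_n$ (with $n = |\mathsf T|$) and a Schubert subvariety $X_w \subset X = G/P$ such that $\mu(T^*_X X_w) = \mathcal O_\mathsf T$. The motivating observation is that 2-column tableaux parametrize orbital varieties in the nilpotent orbits consisting of elements $x$ with $x^2 = 0$; since the nilradical $\u$ of a cominuscule parabolic itself satisfies $\u^2 = 0$, orbital varieties of this type ought to be realizable through cotangent bundles of Grassmannians, and $\mu$ ought to hit every such orbital variety as $w$ varies.

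Concretely, let $\lambda = (2^a, 1^b)$ be the shape of $\mathsf T$ (so $n = 2a+b$), and set $d = a + b$, equal to the length of the first column of $\mathsf T$. Take $P$ to be the standard maximal parabolic obtained by omitting $\alpha_d$, so $X = Gr(d, n)$. Let $C_1(\mathsf T) \subset [n]$ denote the set of entries appearing in the first column of $\mathsf T$; it has size $d$ and always contains $1$. I define $w \in S_n^P$ to be the unique Grassmannian permutation with $\{w(1),\ldots,w(d)\} = C_1(\mathsf T)$, and then extract its combinatorial data $(t_i, r_i, c_i)$ in the sense of \cref{rici}. By construction this data is determined directly by the column structure of $\mathsf T$.

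Applying \cref{mainOrbital} to $w$ then yields an explicit system of inequalities on $\u_B$ defining $\mu(T^*_X X_w)$ set-theoretically. To identify this orbital variety with $\mathcal O_\mathsf T$, I will compare with Melnikov's defining equations for orbital varieties in $\mathcal N_{(2^a, 1^b)}$ from \cite{MR2134184}: both sides are orbital varieties in the same nilpotent orbit, cut out by rank-type conditions on subspaces indexed by the same combinatorial data derived from the columns of $\mathsf T$. The main obstacle is verifying that our construction $\mathsf T \mapsto w$ induces the same indexing of orbital varieties by 2-column SYTs as Melnikov's parametrization. Since Melnikov's bijection is not the naive ``row $\leftrightarrow$ 2-cycle'' one (already visible in small examples such as $n = 4$, $d = 3$, where the tableau with row $1,4$ ends up indexing the orbital variety with generic representative $E_{3,4}$ rather than $E_{1,4}$), the match has to be verified combinatorially; the cleanest route is to exhibit a common standard representative $x_{\mathsf T}$ lying in both varieties and then to invoke irreducibility together with a dimension count to force equality.
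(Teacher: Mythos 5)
Your choice of $P$ and $w$ is exactly the one the paper makes: $d$ is the length of the first column and $w$ is the Grassmannian permutation whose first $d$ values are the first-column entries. The gap is in the identification $\mathcal O_{\mathsf T}=\mu(T^*_XX_w)$, which is the entire content of the proposition and which you defer rather than prove. Two specific problems. First, matching the inequalities of \cref{mainOrbital} against Melnikov's equations presupposes that you already know which tableau Melnikov's parametrization attaches to the orbital variety $\mu(T^*_XX_w)$ --- but that is precisely what is being proved, and as you yourself observe the correspondence is not the naive one, so ``the match has to be verified combinatorially'' names the whole problem rather than a routine check. (It would also make \cref{recover}, which the paper derives from this proposition, logically lean on the very result it is meant to recover.) Second, your fallback --- exhibit a common point $x_{\mathsf T}$ of both varieties and conclude by irreducibility plus a dimension count --- is not valid as stated: two distinct irreducible closed subvarieties of equal dimension can share a point (any two orbital varieties in the same orbit meet, e.g.\ at $0$). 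To make it work you would need $x_{\mathsf T}$ to have dense $B$-orbit in $\mathcal O_{\mathsf T}$ (such representatives do exist for two-column tableaux, by \cite{MR2134184}), check that $x_{\mathsf T}$ satisfies the inequalities of \cref{mainOrbital} so that $\mathcal O_{\mathsf T}\subset\mu(T^*_XX_w)$, and separately verify that $\mu(T^*_XX_w)$ lies in the closure of the orbit of type $(2^a,1^b)$ --- i.e.\ that the generic rank on $\u_w$ is $a$ --- so that the two irreducible varieties have the same dimension and the containment forces equality.

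The paper's route avoids all of this. It sets $v=ww_P$ with $w_P$ the longest element of $W_P$, observes that $T^*_{X^B}X^B_v$ is a $\nicefrac BP$-bundle over $T^*_XX_w$, whence $\mu^B(T^*_{X^B}X^B_v)=\mu(T^*_XX_w)$, and then invokes the geometric Robinson--Schensted correspondence \cite{MR929778}, namely $\mathcal O_{\mathsf T}=\mu^B(T^*_{X^B}X^B_v)$ whenever $\mathsf T$ is the left RS tableau of $v$. Everything then reduces to the purely combinatorial check that the left RS tableau of $ww_P$ is $\mathsf T$. Either that verification, or the dense-representative argument carried out in full, is the ingredient your proposal is missing.
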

\begin{proof}
Let $k$ be the number of boxes in the first column of $\mathsf T$, and let $P$ be the standard parabolic subgroup of $G$ corresponding to $\mathsf A_{n-1}\backslash\{\alpha_k\}$. 
The longest element $w_P$ of $W_P$ is given by\[
	w_P(i)=\begin{cases}k+1-i	&\text{for }i\leq k,\\
						n+1-k	&\text{for }i>k.
	\end{cases}
\]

Let $a_1,\cdots,a_k$ be the entries in the first column of $\mathsf T$, written in increasing order, i.e., top to bottom; and $b_1,\cdots,b_{n-k}$ the entries in the second column, also written in increasing order.
We consider the element $w\in S_n$ given by,\[
	w(i)=\begin{cases}a_i	&\text{for }i\leq k,\\
					b_{i-k}	&\text{for }i>k.
	\end{cases}
\]
Let $v=ww_P$.
Since $w\in S_n^P$, the Schubert variety $X^B_v$ is a fibre bundle over $X_w$ with fibre $\nicefrac BP$, and we have a Cartesian diagram,\[
\begin{tikzcd}
	X_{v}^B\arrow[d,"\operatorname{pr}"]\arrow[r,hook]	&X^B\arrow[d]	\\
	X^P_w\arrow[r,hook]					&X^P.
\end{tikzcd}
\]
The map $X^B\rightarrow X^P$ is precisely the quotient map $\nicefrac GB\rightarrow\nicefrac GP$. 
Consequently, $T^*_{X^B}X^B_{v}$ is a $\nicefrac BP$-fibre bundle over $\con$, with the map $T^*_{X^B}X^B_{v}\rightarrow \con$ being simply the restriction 
(to $T^*_{X^B}X^B_{v}\subset T^*X\subset X^B\times\Ni$)
of the map,
$$\operatorname{pr}\times\operatorname{id}_{\Ni}:X^B\times\Ni\rightarrow X\times\Ni.$$
This yields us $\mu^B(T^*_{X^B}X^B_w)=\mu(\con)$.
Finally, we verify that $\mathsf T$ is the left RSK-tableau of $v$, thus obtaining $\mathcal O_{\mathsf T}=\mu^B(T^*_{X^B}X^B_w)=\mu(\con)$.
\end{proof}

\begin{corollary}\label{recover}
Let $\mathsf T$ be a two-column standard Young tableau. 
Consider integers $0\leq j<i\leq n$, and the skew-tableau $\mathsf T\backslash\{1,\cdots,j,i+1,\cdots,n\}$.
Let $\mathsf T_i^j$ denote the tableau obtained from this skew-tableau
via `jeu de taquin'.
Then,\[
	\mathcal O_{\mathsf T}=\set{x\in\Ni}{J(x_i^j)\preceq\mathsf T_i^j},
\]
where $x_i^j$ is the square sub-matrix of $x$ with corners $(t_j+1,t_j+1)$ and $(t_i,t_i)$, $J(x_i^j)$ denotes the Jordan type of $x_i^j$, and $\preceq$ denotes the inclusion order on the set of $G$-orbits $\Ni[\lambda]$.
\end{corollary}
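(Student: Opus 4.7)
The plan is to deduce \cref{recover} from \cref{mainOrbital} by translating its dimension inequalities into Jordan-type dominance conditions on the submatrices $x_i^j$.

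I would first invoke the proposition immediately preceding \cref{recover}, which realizes $\mathcal O_{\mathsf T}$ as $\mu(T^*_X X_w)$ for a specific Schubert variety $X_w$ in a Grassmannian $X = Gr(k,n)$, where $k$ is the length of the first column of $\mathsf T$. Applying \cref{mainOrbital} then characterises $\mathcal O_{\mathsf T}$ as the set of $x \in \u_B$ with $x^2 = 0$ satisfying
\[
    \dim(x\E t_i./\E t_j.) \leq \min\{r_{i-1} - r_j,\ c_i - c_{j+1}\}, \qquad 1 \leq j < i \leq l+1,
\]
where the $t_i, r_i, c_i$ are the numerical data attached to $w$.

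I would next make two algebraic identifications. Since $x \in \u_B$ is strictly upper triangular it preserves each $E(m)$ and so induces an endomorphism of the quotient $E(t_i)/E(t_j)$ whose matrix is exactly $x_i^j$; consequently $\dim(x\E t_i./\E t_j.) = \operatorname{rank}(x_i^j)$. The condition $x^2 = 0$ forces $(x_i^j)^2 = 0$, so the Jordan type of $x_i^j$ has the form $(2^\rho, 1^{m - 2\rho})$ with $\rho = \operatorname{rank}(x_i^j)$ and $m = t_i - t_j$; hence for any two-column partition $\lambda$ of $m$ the dominance $J(x_i^j) \preceq \lambda$ is equivalent to $\operatorname{rank}(x_i^j) \leq \lambda'_2$, the length of the second column of $\lambda$. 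Specialising to $(j, i) = (0, n)$ gives $\mathsf T_n^0 = \mathsf T$ and $J(x) \preceq \mathsf T$, which forces every Jordan block of $x$ to have size at most $2$ and so recovers $x^2 = 0$ from the Jordan-type data alone.

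The combinatorial heart of the argument is to identify the length of the second column of $\mathsf T_{t_i}^{t_j}$ with $\min\{r_{i-1} - r_j,\ c_i - c_{j+1}\}$, where now $i, j$ range over the indices $\{0, 1, \ldots, l+1\}$ of the $t$-sequence. For the $w$ constructed in the preceding proposition an integer $m \in [1, n]$ lies in the first column of $\mathsf T$ if and only if it appears among $w(1), \ldots, w(d)$, so the $t_i, t'_i$ are precisely the breakpoints between maximal runs of first-column and second-column entries in $1, 2, \ldots, n$. Since jeu de taquin on a two-column skew tableau reduces to a bracket-matching procedure between first-column (opening) and second-column (closing) entries, counting matched pairs inside the window $(t_j, t_i]$ and unpacking \cref{defn:rc} yields the desired equality.

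The main obstacle is this last combinatorial identification, as jeu de taquin does not admit a clean closed-form description in general. I would handle it by exhibiting an explicit bijection between matched pairs in the window and suitable indices in the $(r,c)$-data, or alternatively by induction on $i - j$ using the monotonicity of dominance under enlargement of the interval. A brief monotonicity argument, again based on the bracket-matching interpretation, shows that the Jordan-type conditions at arbitrary $0 \leq j < i \leq n$ required by the corollary are controlled by (and in fact equivalent to) the corner conditions of \cref{mainOrbital}. Once both identifications are in place, the two descriptions of $\mathcal O_{\mathsf T}$ coincide and the corollary follows.
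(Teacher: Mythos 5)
Your proposal follows essentially the same route as the paper: reduce to \cref{mainOrbital} via the preceding proposition, use $(x_i^j)^2=0$ to convert the Jordan dominance $J(x_i^j)\preceq\mathsf T_i^j$ into the rank bound $\rk(x_i^j)\leq f_i^j$ with $f_i^j$ the second-column length of $\mathsf T_i^j$, and then identify $f_i^j$ with $\min\{r_{i-1}-r_j,\ c_i-c_{j+1}\}$. The paper leaves that last combinatorial identification as ``a simple exercise,'' whereas you sketch it via the bracket-matching description of jeu de taquin on two-column skew tableaux; otherwise the two arguments coincide.
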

\begin{proof}
This statement is proved in \cite{MR2134184}.
We explain here how it also follows as a consequence of \cref{mainOrbital,Aorbital}.

Since $x^2=0$, we have $(x_i^j)^2=0$ for all $i,j$.
Consequently, the inequality $J(x_i^j)\preceq T_i^j$ is equivalent to the inequality $rk(x_i^j)\leq f_i^j$, where $f_i^j$ is the number of boxes in the second column of $\mathsf T_i^j$.
On the other hand, it follows from \cref{mainOrbital,Aorbital} that\[
	\mathcal O_{\mathsf T}=\set{x\in\mathcal N}{\rk(x_i^j)\leq g_i^j},
\]
for certain integers $g_i^j$.
It is a simple exercise to verify that the integers $f_i^j$ and $g_i^j$ defined here are equal.
\end{proof}

\section{A Type Independent Conjecture}
\label{sec:conj}
In this section, we assume that $X$ is a cominuscule Grassmannian corresponding to some Dynkin diagram.
We conjecture, for any Schubert variety $X_w\subset X$, the following equality,
\begin{align}\label{genEqn}
	\con =\mu^{-1}(\mu(\con ))\cap\pi^{-1}(X_w).
\end{align}
The question is well-posed in both set-theoretic and scheme-theoretic settings.

Suppose $X_w\subset X$ is a smooth Schubert subvariety.
We prove in \cref{whenSmooth} that \cref{genEqn} holds set-theoretically in this case.

Next, let $w_0$ denote the longest element in the Weyl group $W$.
We show in \cref{oppSmooth} that if $X_w\subset X$ is a Schubert variety such that the \emph{opposite} Schubert variety $X_{w_0w}$ is smooth, then \cref{genEqn} holds scheme-theoretically. 
This is a straightforward corollary to \cite[Theorem 1.1]{conormal2017}.

When $X$ is the usual Grassmannian or the symplectic Grassmannian, the set-theoretic version is a consequence of \cref{finalEquations,mainOrbital}.
In type  B, the only cominuscule Grassmannian is the one corresponding to $G=SO_{2n+1}$, and the cominuscule root $\alpha_1$.
In this cases, one easily verifies that for each $w\in W^P$, either $X_w$ is smooth, or $X_{w_0w}$ is smooth, hence settling the set-theoretic version of our conjecture for all cominuscule Grassmannians in types A, B, and C.

One would like to know in which of these cases \cref{genEqn} holds scheme-theoretically, and also whether \cref{genEqn} holds for types D and E.
If it does, can we find a uniform, type independent proof?

\begin{proposition}\label{whenSmooth}
Suppose $X_w$ is smooth.
Then the conormal variety $\con $ satisfies \cref{genEqn} set-theoretically.
\end{proposition}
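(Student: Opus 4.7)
The inclusion $\con \subseteq \mu^{-1}(\mu(\con)) \cap \pi^{-1}(X_w)$ is trivial, so the plan addresses the reverse. It rests on the structural fact that in a cominuscule Grassmannian, every smooth Schubert subvariety is itself a sub-cominuscule-Grassmannian of the ambient type. For $X = \Gr{d}{n}$ this takes the explicit form
\[
X_w = \set{V \in \Gr{d}{n}}{\E a. \subseteq V \subseteq \E b.}
\]
for some integers $0 \le a \le d \le b \le n$, and analogous descriptions hold in the remaining cominuscule types after replacing $\E a.$ and $\E b.$ by appropriate $\omega$-isotropic members of the standard flag.

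Given such a description, the conormal fiber at an arbitrary $V \in X_w$ admits a uniform, $V$-independent characterization: a cotangent vector $x \in T_V^*X$ lies in $(T_V X_w)^{\perp}$ if and only if $\im x \subseteq \E a.$ and $\E b. \subseteq \ker x$. Consequently
\[
\mu(\con) = \set{x \in \Ni}{\im x \subseteq \E a.,\ \E b. \subseteq \ker x},
\]
which is itself a linear subspace of $\Ni$ (and automatically consists of squares-to-zero nilpotents, since $\E a. \subseteq \E b.$).

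With these descriptions in hand, the verification collapses to a one-line check. Let $(V, x)$ lie in the right-hand side of \cref{genEqn}, so $V \in X_w$, $x \in \mu(\con)$, and $\im x \subseteq V \subseteq \ker x$. From $V \in X_w$ I read $\E a. \subseteq V \subseteq \E b.$, and from $x \in \mu(\con)$ I read $\im x \subseteq \E a.$ and $\E b. \subseteq \ker x$. These together are precisely the defining conditions for $x \in (T_V X_w)^{\perp}$; smoothness identifies $\con$ with the total space of this conormal bundle, so $(V, x) \in \con$.

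The main obstacle is not this final verification but invoking the structure theorem for smooth Schubert subvarieties of cominuscule Grassmannians uniformly across all cominuscule types, in particular $\mathsf D_n$, $\mathsf E_6$, and $\mathsf E_7$. The type-$\mathsf A$ and type-$\mathsf C$ statements are classical and explicit; the remaining cases can be treated either by a case analysis or via a Levi-homogeneity argument presenting smooth Schubert subvarieties as orbits of a Levi subgroup containing $T$. In every cominuscule case, once $X_w$ is expressed as a sub-Grassmannian with the analogous sandwiching description, the one-line verification above applies verbatim.
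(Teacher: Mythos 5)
Your proposal rests on the claim that the conormal fiber at every $V \in X_w$ is the $V$-independent linear space
\[
\left\{x : \im x \subseteq E(a),\ E(b) \subseteq \ker x\right\},
\]
and hence that $\mu(\con)$ equals this linear space. This is false, and the error invalidates the rest of the argument. The correct conormal fiber at $V$ (the annihilator of $T_VX_w = \mathrm{Hom}(V/E(a),\,E(b)/V)$ inside $T^*_VX = \mathrm{Hom}(E(n)/V,\,V)$, computed via the trace pairing) is
\[
\left\{x \in T^*_VX : x(E(b)) \subseteq E(a)\right\},
\]
which genuinely depends on $V$. A dimension count already exposes the discrepancy: the conormal fiber has dimension $\dim X - \dim X_w = d(n-b) + a(b-d)$, whereas your set has dimension $a(n-b)$, and these agree only in degenerate cases. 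Concretely, for $X = \Gr{2}{4}$ and $X_w = \{V : E(1) \subseteq V \subseteq E(3)\} \cong \mathbb{P}^1$, the conormal fiber at $V = E(2)$ is the $3$-dimensional space spanned by $E_{1,3}, E_{1,4}, E_{2,4}$, while your formula yields only the $1$-dimensional $[E_{1,4}]$. Correspondingly, $\mu(\con)$ in that example is the $4$-dimensional orbital variety $\{a_{2,3}=0,\ a_{1,2}a_{2,4}+a_{1,3}a_{3,4}=0\} \subset \u_B$ (not a linear space), not the line $[E_{1,4}]$.

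Because your description of $\mu(\con)$ is too small, the final one-line check only handles a proper subset of the right-hand side of \cref{genEqn} and does not prove the containment $\supseteq$. The underlying idea — that smooth Schubert varieties in a cominuscule Grassmannian are sub-Grassmannians $\{V : E(a) \subseteq V \subseteq E(b)\}$, so that $\con$ is homogeneous for the parabolic $L = \mathrm{Stab}(E(a)) \cap \mathrm{Stab}(E(b))$ — is sound and is essentially what the paper exploits. To repair the argument you would replace your fiber description by the correct one $\{x \in T^*_VX : x(E(b)) \subseteq E(a)\}$, note that $\mu(\con) = \mathrm{Ad}(L)\u_w$ since $\con$ is an $L$-homogeneous bundle, and then observe that for $x = \mathrm{Ad}(l)y$ with $l \in L$, $y \in \u_w$, the $L$-invariance of $E(a)$ and $E(b)$ gives $x(E(b)) \subseteq E(a)$, which together with $\im x \subseteq V \subseteq \ker x$ places $(V,x)$ in $\con$. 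As written, however, the proposal has a genuine gap at its central computational step.
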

\begin{proof}
A Schubert variety $X_w$ in a cominuscule Grassmannian $X$ is smooth if and only if $X_w$ is homogeneous for some standard parabolic subgroup $L$, see \cite{billey2010smooth}.

Suppose $X_w$ is homogeneous for some standard parabolic subgroup $L$; let $S_L$ be the corresponding subset of $S$, and $w_L$ the longest word of $W$ supported on $S_L$.
Then $w$ is the minimal representative of $w_L$ in $W^P$. 
Further, the subspace $\u_w\subset\u$ from \cref{defn:uw} is precisely,\[
	\u_w=\bigoplus\limits_{\alpha\geq\gamma,\,Supp(\alpha)\not\in S_L}\lgl_\alpha
\]
In particular, $\u_w$ is $L$-stable.

The quotient map $\nicefrac GB\rightarrow\nicefrac GP$ induces an isomorphism $\nicefrac LB\xrightarrow\sim X_w$, and the conormal variety $\con \rightarrow X_w$ is simply the vector bundle $L\times^B\u_w\rightarrow\nicefrac LB$.
Consequently, we have,\begin{align}\label{smoothOrbital}
	\mu(\con )=\set{Ad(l_0)x_0}{l_0\in L,\,x_0\in\u_w}.
\end{align}

Now, consider some $(l,x)\in G\times^P\u$, satisfying $\pi(l)\in X_w$ and $\mu(l,x)\in\mu(\con )$.
We may assume, without loss of generality, that $l\in L$.
As a consequence of \cref{smoothOrbital}, there exist $l_0\in L$, and $x_0\in\u_w$, such that,
\begin{align*}
	\mu(l,x)	&=Ad(l)x=Ad(l_0)x_0\\
	\implies x	&=Ad(l^{-1}l_0)x_0.
\end{align*}
Now, since $\u_w$ is $L$-stable, we have $x\in\u_w$, hence $(l,x)\in \con $.
\end{proof}

\begin{proposition}\label{oppSmooth}
Suppose the opposite Schubert variety $X_{w_0w}$ is smooth for some $w\in W^P$.
Then $\con $ satisfies \cref{genEqn} scheme-theoretically.
\end{proposition}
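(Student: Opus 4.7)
The approach is to reduce Proposition~\ref{oppSmooth} directly to the author's earlier result \cite[Theorem~1.1]{conormal2017}, which under exactly the hypothesis that the opposite Schubert variety $X_{w_0w}$ is smooth provides a scheme-theoretic description of $\con$ inside $T^*X$.

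First, the containment $\con\subset\mu^{-1}(\mu(\con))\cap\pi^{-1}(X_w)$ holds trivially as schemes, and in fact is noted in the introduction to hold in general: by construction the structure map $\pi_w:\con\to X_w$ factors $\pi|_{\con}$ through $X_w$, so $\con\subset\pi^{-1}(X_w)$; and $\con\subset\mu^{-1}(\mu(\con))$ is tautological.

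For the reverse containment, I would invoke \cite[Theorem~1.1]{conormal2017}, which realizes $\con$ scheme-theoretically as an intersection of the form $\pi^{-1}(X_w)\cap\mu^{-1}(\overline{\mathcal O})$, where $\overline{\mathcal O}\subset\Ni$ is a specific closed subvariety of the nilpotent cone attached to $w$. To finish, I would identify $\overline{\mathcal O}$ with $\mu(\con)$: by Proposition~\ref{orbitalConormal} the image $\mu(\con)$ is itself an orbital variety, and since $\mu$ is proper (Section~\ref{sub:springer}) this image is closed. Combining the trivial containment with the scheme-theoretic equality $\con=\pi^{-1}(X_w)\cap\mu^{-1}(\overline{\mathcal O})$ forces $\mu(\con)\subset\overline{\mathcal O}$ and, conversely, $\overline{\mathcal O}\subset\mu(\con)$ follows because $\overline{\mathcal O}$ in \cite{conormal2017} is defined precisely as an orbital variety closure cut out by $\mu$-invariant data determined by $w$. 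The scheme-theoretic equality of \cref{genEqn} then drops out.

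The main obstacle is purely expository: one must extract from the statement of \cite[Theorem~1.1]{conormal2017} the precise form of the closed subvariety $\overline{\mathcal O}$, and check that it coincides as a closed subscheme of $\Ni$ with the orbital variety $\mu(\con)$ produced by Proposition~\ref{orbitalConormal}. Once that identification is made, the proof is immediate; without it, the earlier theorem only yields an \emph{a priori} larger intersection and the corollary would fail to be automatic.
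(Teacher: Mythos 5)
Your proposal has a genuine gap: it rests on a misreading of what \cite[Theorem 1.1]{conormal2017} actually provides. That theorem does not present $\con$ in the form $\pi^{-1}(X_w)\cap\mu^{-1}(\overline{\mathcal O})$ for some closed $\overline{\mathcal O}\subset\Ni$; if it did, the proposition would indeed be nearly tautological, which is why your argument feels like it ``drops out.'' What the theorem actually gives (and what the paper's proof uses) is an embedding $\phi:\con\rightarrow\nicefrac{LG}{\mathcal P}$ into a partial affine flag variety of the loop group, identifying $\con$ with an open subset of an affine Schubert variety. The entire mathematical content of the proposition lies in the two steps you omit: (i) identifying the structure map $\pi$ and the Springer map $\mu$ with the restrictions of the two projections $\pi_d:\nicefrac{LG}{\mathcal P}\rightarrow\nicefrac{LG}{\mathcal G_d}$ and $\pi_0:\nicefrac{LG}{\mathcal P}\rightarrow\nicefrac{LG}{\mathcal G_0}$ to maximal parabolic quotients (here $\mathcal P$ omits both the affine node $\alpha_0$ and the cominuscule node $\alpha_d$); and (ii) invoking the standard scheme-theoretic fact that any Schubert variety $Y$ in a Kac--Moody flag variety satisfies $Y=\pi_0^{-1}(\pi_0(Y))\cap\pi_d^{-1}(\pi_d(Y))$. \Cref{genEqn} is then the restriction of this identity to the open subset $\phi(\con)$.

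Your paragraph attempting to identify $\overline{\mathcal O}$ with $\mu(\con)$ is therefore arguing about an object that does not appear in the cited theorem, and the claim that ``$\overline{\mathcal O}\subset\mu(\con)$ follows because $\overline{\mathcal O}$ is defined precisely as an orbital variety closure'' is unsupported. You correctly flag in your final paragraph that everything hinges on extracting the precise form of the cited theorem; the honest answer is that, in the form you assumed, the theorem is not available, and the affine Schubert variety mechanism is the missing idea.
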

\begin{proof}
Let $\dynk_0$ denote the Dynkin diagram of $G$, and let $\dynk$ be the corresponding extended Dynkin diagram.
The loop group $LG=G(\k[t,t^{-1}])$ is an affine Kac-Moody group corresponding to the extended Dynkin diagram $\dynk$.
Let $\mathcal G_0$, $\mathcal G_d$, and $\mathcal P$ be parabolic subgroups of $LG$ corresponding to the subsets $\dynk\backslash\{\alpha_0\},\dynk\backslash\{\alpha_d\}$, and $\dynk\backslash\{\alpha_0,\alpha_d\}$ respectively.

Following \cite{conormal2017}, there exists an embedding $\phi:\con \rightarrow\nicefrac{LG}{\mathcal P}$ such that $\phi(\con )$ is an open subset of some Schubert subvariety of $\nicefrac{LG}{\mathcal P}$.
Further, we can identify the structure map $\pi$ and the Springer map $\mu$ as the restriction to $\phi(\con )$ of the quotient maps $\pi_d:\nicefrac{LG}{\mathcal P}\rightarrow\nicefrac{LG}{\mathcal G_d}$ and $\pi_0:\nicefrac{LG}{\mathcal P}\rightarrow\nicefrac{LG}{\mathcal G_0}$ respectively.

Now, for any Schubert variety $Y\subset\nicefrac{LG}{\mathcal P}$, we have the scheme-theoretic equality,\begin{align*}
	Y=\pi_0^{-1}(\pi_0(Y))\cap\pi_d^{-1}(\pi_d(Y)).
\end{align*}
From this, we deduce that \cref{genEqn} holds for $\con $ scheme-theoretically.
\end{proof}

\bibliography{biblio}
\bibliographystyle{amsalpha}

\noindent
 {\scshape Rahul Singh}\\
 {\scshape Department of Mathematics, Northeastern University,\\ Boston, MA 02115, USA}.\\
 {\itshape E-mail address}: \texttt{singh.rah@husky.neu.edu}
\end{document}